\definecolor{lightgray}{gray}{0.95}
\newtheorem{theorem}{Theorem}
\newtheorem{lemma}[theorem]{Lemma}
\newtheorem{definition}{Definition}
\newtheorem{corollary}[theorem]{Corollary}
\newtheorem{proposition}[theorem]{Proposition}
\newtheorem{remark}{Remark}
\newcommand{\ds}{\displaystyle}
\newcommand{\R}{\mathbb{R}}
\newcommand{\tr}{\textrm{tr}}
\newcommand{\rank}{\textrm{rank}}
\algnewcommand{\Input}[1]{%
  \State \textbf{Input:} {\raggedright #1}
  
}
\algnewcommand{\Initialize}[1]{%
  \State \textbf{Initialize:}
  \Statex \hspace*{\algorithmicindent}\parbox[t]{.8\linewidth}{\raggedright #1}
}
\algnewcommand{\Output}[1]{%
  \State \textbf{Output:} {\raggedright #1}
}
\begin{document}
\title{Inexact gradient projection method with relative error  tolerance}
\author{
A. A. Aguiar \thanks{Instituto de Matem\'atica e Estat\'istica, Universidade Federal de Goi\'as,  CEP 74001-970 - Goi\^ania, GO, Brazil, E-mails: {\tt  ademiraguia@discente.ufg.br}, {\tt  orizon@ufg.br}, {\tt  lfprudente@ufg.br}. The authors was supported in part by  CNPq grants 305158/2014-7 and 302473/2017-3,  FAPEG/PRONEM- 201710267000532 and CAPES.}
\and
O.  P. Ferreira  \footnotemark[1]
\and
L. F. Prudente \footnotemark[1]
}

\maketitle

\maketitle
\begin{abstract}
A gradient projection method with feasible inexact projections is proposed in the present paper. The inexact projection is performed using a relative error tolerance. Asymptotic convergence analysis and iteration-complexity bounds of the method employing constant and Armijo step sizes are presented. Numerical results are reported illustrating  the potential advantages of considering inexact projections instead of exact ones  in  some medium scale instances of a least squares problem over the spectrohedron.
\end{abstract}

\noindent
{\bf Keywords:} Gradient method,  feasible inexact projection,  constrained convex optimization.

\medskip
\noindent
{\bf AMS subject classification:}  49J52, 49M15, 65H10, 90C30.

\section{Introduction}
In this paper, we address general  constrained convex optimization problems of the form
\begin{equation} \label{eq:OptP}
	\min \{ f(x) :~ x\in C\},
\end{equation}
where $C$ is a closed and convex subset of $\mathbb{R}^n$ and $f:\mathbb{R}^n \to {\mathbb R}$ is a continuously differentiable function. Denote by $f^*:= \inf_{x\in C} f(x)$ the optimal value  of \eqref{eq:OptP} and by  $\Omega^*$  its  solution set, which we will assume to be non-empty unless the contrary is explicitly stated. The Problem~\eqref{eq:OptP} is a basic optimization issue, it has appeared  in several areas of science and engineering, including   machine learning, control theory and signal processing, see for example \cite{Bottou_Curtis_Nocedal2018, Boyd_Ghaoui_Ferron1994, Figueiredo2007, Ma_Hu_Gao2015, Sra_Nowozin_Wright2012}. In the present paper, we are  interested in  gradient-type  algorithms   to solve it. 

The gradient projection method (GPM) is the one of the most oldest method to solve Problem \eqref{eq:OptP}, its convergence properties go back to the works of Goldstein \cite{Goldstein1964} and Levitin and Polyak \cite{Polyak_Levitin1966}. After these works, many variants of it have appeared throughout the years, resulting in a wide literature on the subject, see, for example, \cite{yunier_roman2010, Bertsekas1976, Bertsekas1999, Fan_Wang_Yan2019, Figueiredo2007, Gong2011,  Max_Jefferson_Renato2020, Iusem2003, Patrascu_Necoara2018, Zhang_Wang_Yang2019}. The GPM has attracted the attention of the scientific community working in optimization, mainly due to its  simplicity and easy implementation. Besides, since this method uses only first order derivatives, it is often considered as a scalable solver for large-scale optimization problems, see \cite{Guanghui2013, More1989, More1990, Nesterov_Nemirovski2013, Sra_Nowozin_Wright2012, tang_golbabaee_davies2017}. At each iteration, the classical GPM moves along the direction of the negative gradient, and then projects the iterate onto $C$ if it is infeasible. 
Although the feasible set of many important problems has an easy-to-handle structure, in general this set could be so complex that the exact projection can not be easily computed. It is well known that the mostly computational burden of each iteration of the GPM is in the solution of this subproblem. In fact, one drawback of methods that use exact projections is to solve a quadratic problem at each stage, which can lead to a substantial increase in the cost per iteration if the number of unknowns is large. In order to reduce the computational effort spent on projections, inexact procedures have been proposed, resulting in more efficient methods, see for example \cite{BirgMartRay2000, Fan_Wang_Yan2019, Patrascu_Necoara2018, Zhang_Wang_Yang2019}. Moreover, considering inexact schemes provides theoretical support for real computational implementations of exact methods. It is worth mentioning that throughout the years there has been an increase in the popularity of inexact methods due to the emergence of large-scale problems in compressed sensing, machine learning applications and data fitting, see for instance \cite{Golbabaee_Davies2018, Mark_Nicolas_Francis2011, Man_Cho_Zirui2017, Sra_Nowozin_Wright2012}.
Motivated by practical and theoretical reasons, the purpose of the present  paper is to present a  new inexact version of the GPM, which we call \textit{Gradient-InexP method} (GInexPM). It consists of using a general inexact projection instead of the exact one used in the GPM. The inexact projection concept considered in the present paper is a variation of the one appeared in \cite[Example 1]{VillaSalzo2013}, which is defined by using an approximated property of the exact projection. 
In particular, it accepts the exact projection which can be adopted when it is easily obtained (for instance, the exact projection onto a box constraint or Lorentz cone can be easily obtained; see \cite[p. 520]{NocedalWright2006} and \cite[Proposition~3.3]{FukushimaTseng2002}, respectively). It is worth noting that our approach to compute the inexact projection has not being considered in the study of the classical gradient method, in particular it is different from the ones proposed in \cite{BirgMartRay2000, Fan_Wang_Yan2019, Golbabaee_Davies2018, Patrascu_Necoara2018, Man_Cho_Zirui2017, Zhang_Wang_Yang2019}. The analyses of  GInexPM will be made employing two diferent step sizes, namely, constant step size and Armijo's step size along the feasible direction. We point out that these step sizes are discussed extensively in the literature on this subject, where many of our results were inspired, see for example \cite{yunier_roman2010, Bertsekas1999, Iusem2003, IusemSvaiter95, Lee_Wright2019, Nesterov2004}. 

\textit{Contributions:} The main novelty in our work is the use of relative error tolerances in the computation of the inexact projection, to analyze the convergence properties of GInexPM.  Our numerical experiments showed that GInexPM outperformed the GPM on a set of least squares problem over the spectrohedron. From a theoretical point of view, under suitable assumptions, the classic results of GPM were obtained for GInexPM as well. More specifically, we have showed that all cluster points of the  sequence generated by GInexPM with constant step size or Armijo's step size are solutions of problem~\eqref{eq:OptP}. Futhermore, under convexity of the objective function, this sequence converges to a solution, if any. In both cases, the analysis establishes convergence results without any compactness assumption. We have also studied iteration-complexity bounds of GInexPM for both  constant step size and Armijo's step size. The presented analysis establishes that the complexity bound  $\mathcal{O}(1/\sqrt{k})$ is unveil for finding $\epsilon$-stationary points for Problem \eqref{eq:OptP}, and, under convexity on $f$, the rate to find a $\epsilon$-optimal functional value is $\mathcal{O}(1/k)$.

\textit{Content of the paper:} In section \ref{Sec:Prel}, some notations and basic results used throughout the paper is presented. In particular, section \ref{Sec:SubInexProj} is devoted to present the concept relative feasible inexact projection and some properties about this concept. In section \ref{SubSec:CSR}, we describe GInexPM method using the constant step size. The results of convergence using constant step size, as well as, results of iteration-complexity bound are presented in the sections \ref{SubSec:CAnalysisC} and \ref{SubSec:IntComp}, respectively. The results related to Armijo’s step sizes is presented in section \ref{SubSec:Armijo}. In section \ref{SubSec:CAnalysisA},  we present the asymptotic convergence analysis of GInexPM using Armijo's step size, and an iteration-complexity bound is presented in section \ref{SubSec:IterCompArm}. Numerical experiments are provided in section \ref{Sec:NumExp}. Finally, the last section presents some final considerations.\\

\section{Preliminaries}  \label{Sec:Prel}
In this section, we introduce  some notation and results used throughout our presentation. We denote ${\mathbb{N}}:=\{0,1,2,\ldots\}$,  $\langle \cdot,\cdot \rangle$ is the usual inner product in $\mathbb{R}^n$ and $\|\cdot\|$ is the Euclidean norm. 
 Let  $f:  {\mathbb R}^n \to {\mathbb R}$ be a differentiable function and $C \subseteq {\mathbb R}^n$. The  gradient $\nabla f$ of $f$ is said to be Lipschitz continuous on $C$ with constant $L>0$ if
\begin{equation} \label{eq:LipCond}
	\|\nabla f(x)-\nabla f(y)\|\leq L \|x-y\|, \qquad \forall~x, y\in C.
\end{equation}
Combining this definition with the fundamental theorem of calculus, we obtain the following result, for which the proof can found in \cite[Proposition A.24]{Bertsekas1999}.
\begin{lemma} \label{Le:derivlipsch}
	Let $f: {\mathbb R}^n \to {\mathbb R}$ be a differentiable function with   Lipschitz continuous  gradient on $C \subseteq {\mathbb R}^n$ with constant $L>0$. Then, $f(y) - f(x) - \langle \nabla f(x), y-x \rangle \leq \frac{L}{2} \|x-y\|^2$,  for all $x, y\in C$.
\end{lemma}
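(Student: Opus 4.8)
The plan is to reduce the statement to a one-dimensional integral identity via the fundamental theorem of calculus, and then control the resulting integrand using the Lipschitz condition \eqref{eq:LipCond} together with the Cauchy--Schwarz inequality. Concretely, I would introduce the scalar function $g:[0,1]\to\mathbb{R}$ defined by $g(t):=f(x+t(y-x))$. Since $f$ is differentiable, $g$ is differentiable with $g'(t)=\langle \nabla f(x+t(y-x)),\,y-x\rangle$, and the fundamental theorem of calculus gives
\begin{equation*}
f(y)-f(x)=g(1)-g(0)=\int_0^1 \langle \nabla f(x+t(y-x)),\,y-x\rangle\,dt.
\end{equation*}

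The next step is to subtract the linear term $\langle \nabla f(x),y-x\rangle=\int_0^1 \langle \nabla f(x),\,y-x\rangle\,dt$ from both sides, yielding
\begin{equation*}
f(y)-f(x)-\langle \nabla f(x),y-x\rangle=\int_0^1 \langle \nabla f(x+t(y-x))-\nabla f(x),\,y-x\rangle\,dt.
\end{equation*}
From here I would bound the integrand: by Cauchy--Schwarz it is at most $\|\nabla f(x+t(y-x))-\nabla f(x)\|\,\|y-x\|$, and by the Lipschitz condition \eqref{eq:LipCond} the first factor is at most $L\,\|t(y-x)\|=Lt\,\|y-x\|$. Substituting and integrating $\int_0^1 t\,dt=\tfrac12$ produces the claimed bound $\tfrac{L}{2}\|x-y\|^2$.

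The one point that requires care — and the only genuine obstacle — is the legitimacy of applying \eqref{eq:LipCond} at the intermediate points $x+t(y-x)$ for $t\in[0,1]$. The Lipschitz hypothesis is assumed only on $C$, so the estimate is valid only if the entire segment $\{x+t(y-x):t\in[0,1]\}$ lies in $C$. This is precisely where convexity of $C$ is used: since $x,y\in C$ and $C$ is convex, every convex combination $x+t(y-x)$ belongs to $C$, so both $g$ is well-defined as a real-valued function and the Lipschitz bound applies pointwise along the path. With this observation the integral estimate goes through and the proof is complete.
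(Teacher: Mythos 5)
Your proof is correct and takes essentially the same route the paper relies on: the paper does not spell out an argument but states that the result follows by ``combining this definition with the fundamental theorem of calculus'' and cites \cite[Proposition A.24]{Bertsekas1999}, which is precisely your FTC-plus-Cauchy--Schwarz argument. Your closing observation is also apt: the estimate needs the whole segment $\{x+t(y-x): t\in[0,1]\}$ to lie in $C$, a point the lemma's statement (with general $C\subseteq{\mathbb R}^n$) glosses over but which is harmless in the paper, since every application takes $C$ to be the closed convex feasible set of \eqref{eq:OptP}.
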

Let $f: {\mathbb R}^n \to {\mathbb R}$ be a differentiable function and $C \subseteq {\mathbb R}^n$ be a convex set.  The function $f$ is strongly convex on $C$, if there exists a constant $\mu \geq 0$ such that 
\begin{equation} \label{eq:fstrong}
	f(y) - f(x) - \langle \nabla f(x), y-x \rangle \geq \frac{\mu}{2} \|x-y\|^2, \qquad  \forall~ x, y\in C.
\end{equation}
When $\mu = 0$, $f$ is said to be  convex. If $f(x) \leq f(y)$ implies $\langle \nabla f(y), x-y \rangle \leq 0$, for any $x, y \in C$, then $f$  is said to be quasiconvex. Moreover,  $f$ is said to be  pseudoconvex if  $\langle \nabla f(y), x-y \rangle \geq 0$ implies $f(x) \leq f(y)$, for any $x, y \in C$, for more details, see \cite{mangasarian1994}. Recall  that the convexity of a function guarantees pseudoconvexity, which in turn guarantees quasiconvexity, see \cite{mangasarian1994}.

A point ${\bar{x}} \in C$ is  said to be a {\it stationary point} for Problem \eqref{eq:OptP} if 
\begin{equation} \label{eq:StatPoint}
	\langle \nabla f({\bar{x}}),  x-{\bar{x}}\rangle \geq 0, \qquad \forall ~ x\in  C.
\end{equation}
We end this section with a useful concept in the analysis of the sequence generated by the gradient method, for more details, see \cite{burachik1995full}.
\begin{definition} \label{def:QuasiFejer}
	A sequence $(y^k)_{k\in\mathbb{N}}$ in $\mathbb{R}^n$ is quasi-Fej\'er convergent to a set $W\subset {\mathbb R}^n$ if, for every $w\in W$, there exists a sequence $(\epsilon_k)_{k\in\mathbb{N}}\subset\mathbb{R}$ such that $\epsilon_k\geq 0$, $\sum_{k\in \mathbb{N}}\epsilon_k<+\infty$, and $\|y_{k+1}-w\|^2\leq \|y_k-w\|^2+\epsilon_k$,  for all $k\in \mathbb{N}$.
\end{definition}
The main property of a quasi-Fej\'er convergent sequence is stated in the next result, and its proof can be found in \cite{burachik1995full}.
\begin{theorem}\label{teo.qf}
	Let $(y^k)_{k\in\mathbb{N}}$ be a sequence in $\mathbb{R}^n$. If $(y^k)_{k\in\mathbb{N}}$ is quasi-Fej\'er convergent to a nomempty set $W\subset  {\mathbb R}^n$, then $(y^k)_{k\in\mathbb{N}}$ is bounded. Furthermore, if a cluster point $y$ of $(y^k)_{k\in\mathbb{N}}$ belongs to $W$, then $\lim_{k\rightarrow\infty}y_k=y$.
\end{theorem}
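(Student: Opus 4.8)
The plan is to prove the two assertions separately, first boundedness and then convergence, both hinging on the summability of the perturbation sequence $(\epsilon_k)$.

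For boundedness, I would fix an arbitrary $w\in W$, which exists because $W$ is nonempty, and iterate the defining inequality $\|y_{k+1}-w\|^2\leq \|y_k-w\|^2+\epsilon_k$ down to the base index. Telescoping yields $\|y_{k}-w\|^2\leq \|y_0-w\|^2+\sum_{j=0}^{k-1}\epsilon_j$, and since $\sum_{j\in\mathbb{N}}\epsilon_j<+\infty$ the right-hand side is bounded above by the finite constant $\|y_0-w\|^2+\sum_{j\in\mathbb{N}}\epsilon_j$, uniformly in $k$. Hence the whole sequence lies in a ball centered at $w$, which proves boundedness.

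For the convergence statement, assume $y\in W$ is a cluster point and apply the quasi-Fej\'er hypothesis with $w=y$, obtaining a summable $(\epsilon_k)$ with $a_{k+1}\leq a_k+\epsilon_k$, where $a_k:=\|y_k-y\|^2$. The key step, and the only nontrivial point, is to upgrade this to genuine convergence of $(a_k)$, since the inequality alone gives only boundedness. To do this I would introduce the tails $t_k:=\sum_{j=k}^{\infty}\epsilon_j$, which are finite, nonnegative, and satisfy $t_k\to 0$ together with $t_k=\epsilon_k+t_{k+1}$. Then $b_k:=a_k+t_k$ obeys $b_{k+1}=a_{k+1}+t_{k+1}\leq a_k+\epsilon_k+t_{k+1}=a_k+t_k=b_k$, so $(b_k)$ is nonincreasing and bounded below by $0$, hence convergent; because $t_k\to 0$, the sequence $a_k=b_k-t_k$ converges to the same limit, say $\ell\geq 0$.

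Finally I would identify $\ell$. Since $y$ is a cluster point there is a subsequence $y_{k_i}\to y$, whence $a_{k_i}=\|y_{k_i}-y\|^2\to 0$; as $(a_k)$ converges, its limit coincides with the limit along any subsequence, forcing $\ell=0$. Therefore $\|y_k-y\|^2\to 0$, that is, $y_k\to y$, completing the proof. I expect the monotonization trick via the tails $t_k$ to be the main obstacle, as it is precisely what turns the one-sided almost-monotone estimate into an actual limit.
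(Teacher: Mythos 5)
Your proof is correct and complete: the telescoping bound gives boundedness, and the tail-sum monotonization $b_k=\|y_k-y\|^2+\sum_{j\geq k}\epsilon_j$ correctly upgrades the perturbed inequality to convergence of $\|y_k-y\|^2$, whose limit is forced to be $0$ by the cluster-point subsequence. The paper itself does not prove this theorem, deferring instead to the cited reference of Burachik, Drummond, Iusem and Svaiter; your argument is precisely the classical one given there, so nothing is missing.
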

\subsection{Inexact projection} \label{Sec:SubInexProj}
In this section, we present the concept of feasible inexact projection onto a closed and convex set. This concept has already been used in \cite{Ademir_Orizon_Leandro2020, OrizonFabianaGilson2018, Reiner_Orizon_Leandro2019}. We also present some new properties of the feasible inexact projection used throughout the paper. The definition of feasible inexact projection is as follows.
\begin{definition} \label{def:InexactProj}
	Let $C\subset {\mathbb R}^n$ be a closed convex set and $\varphi_{\gamma}: {\mathbb R}^n\times {\mathbb R}^n\times {\mathbb R}^n \to {\mathbb R}_{+}$ be a function  satisfying the following condition 
	\begin{equation} \label{eq:phi} 
		\varphi_{\gamma}(u, v, w) \leq \gamma_1 \|v-u\|^2 + \gamma_2 \|w-v\|^2 + \gamma_3 \|w-u\|^2, \qquad \forall~ u, v, w \in {\mathbb R}^n,
	\end{equation}
	where $\gamma = (\gamma_1, \gamma_2, \gamma_3) \in {\mathbb R}^3_+$ is a given forcing parameter. 
 The {\it feasible inexact projection mapping} relative to $u \in C$ with error tolerance $\varphi_{\gamma}$, denoted by ${\cal P}_C(\varphi_{\gamma},u, \cdot): {\mathbb R}^n \rightrightarrows C$, is the set-valued mapping defined as follows
	\begin{equation} \label{eq:Projw} 
		{\cal P}_C(\varphi_{\gamma}, u, v) := \left\{w\in C:~\big\langle v-w, y-w \big\rangle \leq \varphi_{\gamma}(u, v, w), \quad \forall~ y \in C \right\}.
	\end{equation}
	Each point $w\in {\cal P}_C(\varphi_{\gamma}, u, v)$ is called a {\it feasible inexact projection of $v$ onto $C$ relative to $u$  with error tolerance $\varphi_{\gamma}$}.
\end{definition}

The feasible inexact projection  generalizes  the concept usual projection. In the following,  we present some remarks about this concept and some examples of functions satisfying \eqref{eq:phi}. 
\begin{remark}\label{rem: welldef}
Let $\gamma_1$, $\gamma_2$ and $\gamma_3$ be nonnegative forcing parameters, $C\subset {\mathbb R}^n$, $u\in C$ and $\varphi_{\gamma}$ be as in Definition~\ref{def:InexactProj}.  Therefore, for all $v\in {\mathbb R}^n$, it follows from \eqref{eq:Projw} that ${\cal P}_C(\varphi_0, u, v)$ is the exact projection of $v$ onto $C$; see \cite[Proposition~2.1.3, p. 201]{Bertsekas1999}. Moreover, ${\cal P}_C(\varphi_0, u, v) \in {\cal P}_C(\varphi_{\gamma}, u, v)$  which implies that  ${\cal P}_C(\varphi_{\gamma}, u, v)\neq \varnothing$, for all $u\in C$ and $v\in {\mathbb R}^n$. Consequently, the set-valued mapping ${\cal P}_C(\varphi_{\gamma}, u, \cdot)$ as stated in \eqref{eq:Projw} is well-defined. Note that  the following functions $\varphi_{1}(u, v, w) = \gamma_1 \|v-u\|^2 + \gamma_2 \|w-v\|^2 + \gamma_3 \|w-u\|^2$, $\varphi_{2}(u, v, w) = \gamma_1 \|v-u\|^2$, $\varphi_{3}(u, v, w) = \gamma_2 \|w-v\|^2$, $\varphi_{4}(u, v, w) = \gamma_3 \|w-u\|^2$, and $\varphi_{5}(u, v, w)= \gamma_1 \gamma_2 \gamma_3 \|v-u\|^2 \|w-v\|^2 \|w-u\|^2$ satisfy \eqref{eq:phi}.
\end{remark}

Item (a) of the next lemma is a variation of \cite[Lemma 6]{Reiner_Orizon_Leandro2019}. By using item (a), we will derive an inequality that together with this item will play an important role in the remainder of this paper. 
\begin{lemma} \label{pr:cond}
	Let $v \in {\mathbb R}^n$, $u \in C$, $\gamma = (\gamma_1, \gamma_2, \gamma_3) \in {\mathbb R}^3_+$ and $w\in {\cal P}_C(\varphi_{\gamma}, u, v)$. Then, there hold:
	\begin{itemize}
	\item[(a)] $\displaystyle \|w-x\|^2 \leq \|v-x\|^2 + \frac{2\gamma_1+2\gamma_3}{1-2\gamma_3}\|v-u\|^2- \frac{1-2\gamma_2}{1-2\gamma_3}\|w-v\|^2$, for all $x \in C$ and $0 \leq \gamma_3 <1/2$;
	\item[(b)] $\displaystyle \big\langle v-w, y-w \big\rangle \leq \frac{\gamma_1 + \gamma_2}{1-2\gamma_2}\|v-u\|^2+\frac{\gamma_3-\gamma_2}{1-2\gamma_2}\|w-u\|^2$, for all $ y\in C$ and $0 \leq \gamma_2 <1/2$.
	\end{itemize}
\end{lemma}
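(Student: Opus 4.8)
The plan is to work throughout from the single inequality that membership $w \in {\cal P}_C(\varphi_{\gamma}, u, v)$ supplies: combining \eqref{eq:Projw} with \eqref{eq:phi}, for every $y \in C$ one has
\[
\langle v-w, y-w\rangle \leq \gamma_1\|v-u\|^2 + \gamma_2\|w-v\|^2 + \gamma_3\|w-u\|^2.
\]
Both items amount to eliminating one of the three squared-norm terms on the right and rewriting the remaining ones with the prescribed denominator. The unifying device is that $u$ itself lies in $C$, so this inequality may also be specialized at $y=u$ to produce an auxiliary relation among the squared norms.

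For item (a), I would first take $y=x$ for an arbitrary $x\in C$ and replace the inner product via the polarization identity $2\langle v-w, x-w\rangle = \|w-v\|^2 + \|w-x\|^2 - \|v-x\|^2$, which rearranges to
\[
\|w-x\|^2 \leq \|v-x\|^2 - \|w-v\|^2 + 2\gamma_1\|v-u\|^2 + 2\gamma_2\|w-v\|^2 + 2\gamma_3\|w-u\|^2.
\]
The only term absent from the target is $\|w-u\|^2$. To control it I would specialize this same estimate at $x=u$, turning it into a relation involving only $\|w-u\|^2$, $\|v-u\|^2$ and $\|w-v\|^2$; solving for $\|w-u\|^2$ — legitimate precisely because $\gamma_3<1/2$ makes $1-2\gamma_3>0$ — gives
\[
\|w-u\|^2 \leq \frac{(1+2\gamma_1)\|v-u\|^2 - (1-2\gamma_2)\|w-v\|^2}{1-2\gamma_3}.
\]
Substituting this into the displayed estimate and collecting coefficients yields exactly $\frac{2\gamma_1+2\gamma_3}{1-2\gamma_3}$ in front of $\|v-u\|^2$ and $-\frac{1-2\gamma_2}{1-2\gamma_3}$ in front of $\|w-v\|^2$.

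For item (b) I would instead keep the inner-product form and eliminate $\|w-v\|^2$. Specializing the defining inequality at $y=u$ and again using polarization, $2\langle v-w, u-w\rangle = \|w-v\|^2 + \|w-u\|^2 - \|v-u\|^2$, leads after rearrangement to
\[
(1-2\gamma_2)\|w-v\|^2 \leq (1+2\gamma_1)\|v-u\|^2 - (1-2\gamma_3)\|w-u\|^2,
\]
where the hypothesis $\gamma_2<1/2$ guarantees $1-2\gamma_2>0$, so $\|w-v\|^2$ can be bounded. Feeding this bound into the $\gamma_2\|w-v\|^2$ summand of the defining inequality (kept with general $y$) and simplifying produces the coefficients $\frac{\gamma_1+\gamma_2}{1-2\gamma_2}$ and $\frac{\gamma_3-\gamma_2}{1-2\gamma_2}$.

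The computations are elementary; the only genuine idea is recognizing that one should feed $y=u$ (equivalently $x=u$) back into the very inequality being manipulated, trading the inconvenient squared norm for the two that survive, and that the sign conditions $\gamma_3<1/2$ and $\gamma_2<1/2$ are exactly what permits the division. The main bookkeeping obstacle is checking that the coefficients collapse to the stated quotients after clearing denominators; I would verify the cancellations $\gamma_1(1-2\gamma_3)+\gamma_3(1+2\gamma_1)=\gamma_1+\gamma_3$ driving item (a) and $-\gamma_2(1-2\gamma_3)+\gamma_3(1-2\gamma_2)=\gamma_3-\gamma_2$ driving item (b).
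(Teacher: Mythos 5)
Your proof is correct and follows essentially the same route as the paper's: item (a) is obtained identically (expand $\|w-x\|^2$, apply the defining inequality at $y=x$, then eliminate $\|w-u\|^2$ by specializing the same inequality at $u$), and item (b) likewise proceeds by bounding $\|w-v\|^2$ via the relation at $u$ and substituting back into the defining inequality, with all coefficient cancellations checking out. The one small difference works in your favor: in (b) you derive the intermediate bound $(1-2\gamma_2)\|w-v\|^2 \leq (1+2\gamma_1)\|v-u\|^2 - (1-2\gamma_3)\|w-u\|^2$ directly from the defining inequality at $y=u$, whereas the paper obtains it by invoking item (a) with $x=u$, which strictly speaking carries the extra hypothesis $\gamma_3 < 1/2$ that item (b) does not assume, so your derivation covers the statement in its full generality.
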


\begin{proof}
		Let $x \in C$ and $0 \leq \gamma_3 <1/2$. First note that $\|w-x\|^2 = \|v-x\|^2 - \|w-v\|^2 + 2 \langle v-w, x-w \rangle$. Since $w \in {\cal P}_C(\varphi_{\gamma}, u, v)$, combining the last equality with \eqref{eq:phi} and \eqref{eq:Projw}, we obtain
	\begin{equation} \label{eq:fg}
		\|w-x\|^2 \leq \|v-x\|^2 - (1-2\gamma_2)\|w-v\|^2 + 2\gamma_1 \|v-u\|^2 + 2\gamma_3 \|w-u\|^2.
	\end{equation}
	On the other hand, we  have $\|w-u\|^2 = \|v-u\|^2 - \|w-v\|^2 +  2 \langle v-w, u-w \rangle$. Thus, due to $w\in {\cal P}_C(\varphi_{\gamma}, u, v)$ and $u \in C$, using \eqref{eq:phi} and \eqref{eq:Projw}, and considering that $0 \leq \gamma_3 < 1/2$, we have
	$$
	\|w-u\|^2 \leq \frac{1+2\gamma_1}{1-2\gamma_3}\|v-u\|^2 - \frac{1-2\gamma_2}{1-2\gamma_3} \|w-v\|^2.
	$$
	Therefore, combining the last inequality with \eqref{eq:fg}, we obtain the inequality of item $(a)$. For proving item $(b)$, take $y\in C$ and $0 \leq \gamma_2 <1/2$. Using \eqref{eq:phi} and \eqref{eq:Projw}, we have 
	\begin{equation} \label{eq:srebt}
		\big\langle v-w, y-w \big\rangle \leq \gamma_1 \|v-u\|^2 + \gamma_2 \|w-v\|^2 + \gamma_3 \|w-u\|^2.
	\end{equation}
	Applying item $(a)$ with $x=u$, after some algebraic manipulations, we conclude that
	$$
 	\|w-v\|^2 \leq \frac{1+2\gamma_1}{1-2\gamma_2}\|v-u\|^2-\frac{1-2\gamma_3}{1-2\gamma_2}\|w-u\|^2.
	$$
	The last inequality together \eqref{eq:srebt} yield
	$$
	\big\langle v-w, y-w \big\rangle \leq  \left(\gamma_1 + \frac{\gamma_2+2\gamma_1 \gamma_2}{1-2\gamma_2}\right)\|v-u\|^2 + \left(\gamma_3 - \frac{\gamma_2-2\gamma_2\gamma_3}{1-2\gamma_2}\right) \|w-u\|^2, 
	$$
	which is equivalent to the inequality in $(b)$.
\end{proof}
\section{GInexPM employing the constant step size rule} \label{SubSec:CSR}
In this section, we describe the GInexPM with a feasible inexact projection for solving  problem~\eqref{eq:OptP}. The rule for choosing the step size will be the same used in \cite{Beck2014, Bertsekas1999}, namely, the constant step size rule. For that, we take a exogenous sequence of real numbers $(a_k)_{k\in\mathbb{N}}$  satisfying 
\begin{equation} \label{eq:akbk}
	0 \leq a_{k}\leq b_{k-1}-b_{k}, \qquad k=0, 1, \ldots, 
\end{equation}
for some given nonincreasing sequence of nonegative real numbers $(b_k)_{k\in\mathbb{N}}$ converging to zero, {\it with the notation $ b_{-1}\in {\mathbb R}_{++}$ such that $ b_{-1}>b_0$}.

\begin{remark} \label{re:csab}
	Condition \eqref{eq:akbk} implies that $\sum_{k\in \mathbb{N}} a_k < b_{-1}$. Examples of sequences $(a_k)_{k\in\mathbb{N}}$ and $(b_k)_{k\in\mathbb{N}}$ satisfying \eqref{eq:akbk} are obtained  by taking $a_{k}:=b_{k-1}-b_{k}$ and, for a given $\bar{b}>0$: (i) $ b_{-1}=3 \bar{b}$, $b_0=2\bar{b}$, $b_k=\bar{b}/k$, for all $k=1, 2, \ldots$; (ii) $ b_{-1}=3\bar{b}$, $b_0=2\bar{b}$, $b_k=\bar{b}/\ln(k+1)$, for all $k=1, 2, \ldots$.
\end{remark}
The conceptual GInexPM is formally stated  as follows.

\begin{algorithm} \caption{GInexPM employing constant step size}\label{Alg:INP}
	\begin{algorithmic}[h]
		\State \textbf{Step 0:} Take $(a_k)_{k\in\mathbb{N}}$, $(b_k)_{k\in\mathbb{N}}$ satisfying \eqref{eq:akbk} and an error tolerance function  $\varphi_{\gamma}$. Let $x^0\in C$ and set $k=0$.
		\State \textbf{Step 1:} If $\nabla f(x^{k}) = 0$, then {\bf stop}; otherwise, choose real numbers $\gamma_1^k, \gamma_2^k$ and $\gamma_3^k$ such that  
				\begin{equation} \label{eq:Tolerance}
					0 \leq \gamma_1^k + \gamma_2^k \leq\frac{a_k}{\|\nabla f(x^{k})\|^2}, \qquad 0 \leq \gamma_2^k < \bar{\gamma_2} < \frac{1}{2}, \qquad 0 \leq \gamma_3^k < \bar{\gamma} < \frac{1}{2},
				\end{equation}
				and a fixed step size $\alpha >0$ and define the next iterate $x^{k+1}$ as any feasible inexact projection of $z^k := x^{k}-\alpha \nabla f(x^{k})$ onto $C$ relative to $x^{k}$ with   forcing parameters $\gamma^k:= (\gamma^k_1, \gamma^k_2, \gamma^k_3)$, i.e.,
				\begin{equation*} \label{eq:IntStep}
					x^{k+1} \in {\cal P}_C\left(\varphi_{\gamma^k}, x^{k}, z^k \right).
				\end{equation*}
		\State \textbf{Step 2:} Set $k\gets k+1$, and go to \textbf{Step~1}.
	\end{algorithmic}
\end{algorithm}

Let us describe the main features of the GInexPM. Firstly, we take exogenous sequences $(a_k)_{k\in\mathbb{N}}$ and $(b_k)_{k\in\mathbb{N}}$ satisfying \eqref{eq:akbk} and an error tolerance function  $\varphi_{\gamma}$. Then, we check if at the current iterate $x^{k}$ we have $\nabla f(x^{k}) = 0$, otherwise, we choose nonnegative forcing parameters $\gamma_1^k$, $\gamma_2^k$ and $\gamma_3^k$ satisfying \eqref{eq:Tolerance}. Set a fixed step size $\alpha >0$. By using some inner procedure, the next iterate $x^{k+1}$ is computed as any feasible inexact projection of $z^k = x^k - \alpha\nabla f(x^k)$ onto the feasible set $C$ relative to $x^k$, i.e., $x^{k+1}\in {\cal P}_C(\varphi_{\gamma^k}, x^{k}, z^k)$; an example of such procedure will be presented in section~\ref{Sec:NumExp}. Note that, if $\gamma_1^k \equiv 0$, $\gamma_2^k \equiv 0$ and $\gamma_3^k \equiv 0$, then ${\cal P}_C({\varphi_0}, x^{k}, z^k)$ is the exact projection, see Remark~\ref{rem: welldef}, and our method corresponds to the usual projected gradient method proposed, for example, in \cite{Beck2014, Bertsekas1999}. It is worth noting that $\gamma_1^k$ and $\gamma_2^k$ in \eqref{eq:Tolerance} can be chosen as any nonnegative real numbers satisfying $0 \leq (\gamma_1^k + \gamma_2^k) \|\nabla f(x^{k})\|^2 \leq a_k$, for prefixed sequences $(a_k)_{k\in\mathbb{N}}$ and $(b_k)_{k\in\mathbb{N}}$ satisfying \eqref{eq:akbk}. In this case, we have
\begin{equation} \label{eq:ToleranceCond}
	\sum_{k \in \mathbb{N}} \big[(\gamma_1^k + \gamma_2^k)\|\nabla f(x^k)\|^2\big] < +\infty.
\end{equation}
In the next sections, we will deal with the convergence analysis of the sequence $(x^k)_{k\in\mathbb{N}}$ generated by GInexPM.
\subsection{Asymptotic convergence analysis} \label{SubSec:CAnalysisC}
The aim of this section is to prove the main convergence results about the asymptotic behavior   of the sequence $(x^k)_{k\in\mathbb{N}}$ generated by Algorithm~\ref{Alg:INP}. We assume that the {\it gradient of the objective function $f$ is Lipschitz continuous} with constant $L>0$. Moreover, we also assume that
\begin{equation}\label{eq:alpha}
	0< \alpha < \frac{1-2 \bar{\gamma}}{L}.
\end{equation}
For future references, it is convenient to  define the following constants:
\begin{equation}\label{eq:NuRho}
	\nu := \frac{1-\bar{\gamma_2}-\bar{\gamma}}{\alpha} - \frac{L}{2} > 0, \qquad \rho := \frac{\alpha}{1-2 \bar{\gamma_2}} > 0.
\end{equation}
In the sequel, we state and prove our first result for the sequence $(x^k)_{k\in\mathbb{N}}$.  The obtained inequality is the counterpart of the one obtained, for example, in \cite[Lemma 9.11, p. 176]{Beck2014}.  
\begin{lemma} \label{Le:MainConvP}
The following inequality holds:
		\begin{equation} \label{eq:MainIneqP}
		f(x^{k+1}) \leq f(x^{k}) + \rho (\gamma_1^k + \gamma_2^k) \|\nabla f(x^k)\|^2 - \nu \|x^{k+1}-x^{k}\|^2, \qquad \forall~{k\in \mathbb{N}}. 
	\end{equation} 
\end{lemma}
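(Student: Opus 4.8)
The plan is to combine the descent lemma (Lemma~\ref{Le:derivlipsch}) with part (a) of Lemma~\ref{pr:cond}, specialized to the iterates of the algorithm. Recall that $x^{k+1} \in {\cal P}_C(\varphi_{\gamma^k}, x^k, z^k)$ where $z^k = x^k - \alpha \nabla f(x^k)$. Since $\gamma_3^k < \bar\gamma < 1/2$, the hypothesis $0 \le \gamma_3 < 1/2$ of Lemma~\ref{pr:cond}(a) is satisfied, so I may apply that inequality with $v = z^k$, $u = x^k$, $w = x^{k+1}$, and the natural choice $x = x^k \in C$. Two features make $x = x^k$ the right test point: it causes the term $\|v - u\|^2 = \|z^k - x^k\|^2 = \alpha^2\|\nabla f(x^k)\|^2$ and $\|v - x\|^2 = \|z^k - x^k\|^2$ to coincide, and it lets me identify $\|w - v\|^2 = \|x^{k+1} - z^k\|^2$ in terms of $\|x^{k+1} - x^k\|^2$ after expanding $z^k$.

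\noindent
First I would write the descent estimate. By Lemma~\ref{Le:derivlipsch} applied with $x = x^k$, $y = x^{k+1}$,
\begin{equation*}
	f(x^{k+1}) \le f(x^k) + \langle \nabla f(x^k), x^{k+1} - x^k \rangle + \frac{L}{2}\|x^{k+1} - x^k\|^2.
\end{equation*}
The middle inner product must be controlled by the projection inequality. The bridge is the identity $z^k - x^k = -\alpha \nabla f(x^k)$, which gives $\langle \nabla f(x^k), x^{k+1} - x^k \rangle = -\tfrac{1}{\alpha}\langle z^k - x^k, x^{k+1} - x^k \rangle$; expanding the inner product via $2\langle z^k - x^k, x^{k+1} - x^k\rangle = \|z^k - x^k\|^2 + \|x^{k+1} - x^k\|^2 - \|x^{k+1} - z^k\|^2$ converts everything into squared norms. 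The quantity $\|x^{k+1} - z^k\|^2 = \|w - v\|^2$ is exactly what Lemma~\ref{pr:cond}(a) bounds from below, so the two estimates are compatible.

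\noindent
Next I would invoke Lemma~\ref{pr:cond}(a) with $x = x^k$. After cancelling the common $\|z^k - x^k\|^2 = \|v - x\|^2 = \|v - u\|^2$ term, it yields a lower bound on $\|x^{k+1} - z^k\|^2 = \|w - v\|^2$ in terms of $\|x^{k+1} - x^k\|^2 = \|w - x\|^2$ and $\|z^k - x^k\|^2$, with coefficients involving $\gamma_1^k, \gamma_2^k, \gamma_3^k$ and the factor $1/(1 - 2\gamma_3^k)$. Substituting this lower bound into the expanded descent inequality replaces $\|x^{k+1} - z^k\|^2$ and produces, after collecting terms, a coefficient on $\|x^{k+1} - x^k\|^2$ and a coefficient on $\alpha^2\|\nabla f(x^k)\|^2$. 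Using the bounds $\gamma_2^k < \bar\gamma_2$ and $\gamma_3^k < \bar\gamma$ to replace the iteration-dependent coefficients by the worst-case constants, the coefficient of $\|x^{k+1} - x^k\|^2$ becomes $\nu = \tfrac{1 - \bar\gamma_2 - \bar\gamma}{\alpha} - \tfrac{L}{2}$ and the coefficient of $(\gamma_1^k + \gamma_2^k)\|\nabla f(x^k)\|^2$ becomes $\rho = \tfrac{\alpha}{1 - 2\bar\gamma_2}$, precisely as defined in \eqref{eq:NuRho}; the constraint \eqref{eq:alpha} on $\alpha$ guarantees $\nu > 0$.

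\noindent
The main obstacle will be the bookkeeping in the final step: matching the algebraic coefficients that emerge from combining the two inequalities against the exact definitions of $\nu$ and $\rho$ in \eqref{eq:NuRho}. In particular, one must be careful that the monotonicity of the relevant expressions in $\gamma_2^k$ and $\gamma_3^k$ points in the direction that lets $\bar\gamma_2$ and $\bar\gamma$ furnish valid upper bounds when passing from the $k$-dependent coefficients to the constants, so that the inequality is preserved rather than reversed. The remaining manipulations are elementary expansions of squared norms and the substitution $z^k - x^k = -\alpha \nabla f(x^k)$, so no further conceptual difficulty is expected.
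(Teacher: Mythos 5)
Your proposal is correct and lands on exactly the paper's intermediate inequality, but it enters through a different part of Lemma~\ref{pr:cond}. The paper, after the same descent estimate, rewrites the gradient term as $\frac{1}{\alpha}\big\langle z^k-x^{k+1},\,x^k-x^{k+1}\big\rangle$ and bounds this inner product in one stroke with item~(b) of Lemma~\ref{pr:cond} (taking $y=u=x^k$, $v=z^k$, $w=x^{k+1}$), obtaining the coefficients $\frac{\gamma_1^k+\gamma_2^k}{1-2\gamma_2^k}\alpha^2$ on $\|\nabla f(x^k)\|^2$ and $\frac{\gamma_3^k-\gamma_2^k}{1-2\gamma_2^k}$ on $\|x^{k+1}-x^k\|^2$. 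You instead expand via the polarization identity and call on item~(a) with $x=u=x^k$; carrying out your algebra gives
\begin{equation*}
\|x^{k+1}-z^k\|^2 \;\le\; \frac{1+2\gamma_1^k}{1-2\gamma_2^k}\,\alpha^2\|\nabla f(x^k)\|^2-\frac{1-2\gamma_3^k}{1-2\gamma_2^k}\,\|x^{k+1}-x^k\|^2,
\end{equation*}
and substituting this into the expanded descent inequality reproduces precisely the same two coefficients as the paper, after which the passage to $\nu$ and $\rho$ (first using $1-2\gamma_2^k\le 1$ to discard the denominator in the $\|x^{k+1}-x^k\|^2$ coefficient, then $\gamma_2^k\le\bar{\gamma_2}$ and $\gamma_3^k\le\bar{\gamma}$) is identical to the paper's final step. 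In effect, you re-derive on the fly the $y=u$ case of item~(b) from item~(a) --- which is exactly how the paper proves item~(b) --- so the two arguments are equivalent in content; yours is marginally more self-contained, the paper's marginally shorter. One correction of wording: item~(a), rearranged, furnishes an \emph{upper} bound on $\|x^{k+1}-z^k\|^2=\|w-v\|^2$, not a lower bound as you wrote. Fortunately an upper bound is exactly what your substitution requires, since that term enters the expanded descent inequality with the positive coefficient $\frac{1}{2\alpha}$; the slip is purely terminological and your plan goes through unchanged.
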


\begin{proof}
Since $\nabla f$ satisfies \eqref{eq:LipCond}, applying Lemma \ref{Le:derivlipsch} with $x=x^k$ and $y=x^{k+1}$, we obtain   
	\begin{equation*} \label{eq:fxk6}
		f(x^{k+1}) \leq f(x^{k}) + \big\langle\nabla f(x^{k}), x^{k+1}-x^{k}\big\rangle +\frac{L}{2}\|x^{k+1}-x^{k}\|^2.
	\end{equation*}
Thus, after some algebraic manipulations, we have
	\begin{equation} \label{eq:filc}
	f(x^{k+1}) \leq f(x^{k}) + \frac{1}{\alpha}\big\langle[x^k-\alpha \nabla f(x^k)]-x^{k+1}, x^{k}-x^{k+1}\big\rangle - \left(\frac{1}{\alpha}-\frac{ L}{2}\right) \|x^{k+1}-x^{k}\|^2.
	\end{equation}
Since $x^{k+1} \in {\cal P}_C(\varphi_{\gamma^k}, x^{k}, z^k)$ with $z^k = x^{k}-\alpha \nabla f(x^{k})$, applying item~$(b)$ of Lemma~\ref{pr:cond} with $u=x^k$,  $y=x^k$, $v=z^k$, $w=x^{k+1}$, and $\varphi_{\gamma} = \varphi_{\gamma_k}$, we have
	$$
	\big\langle[x^k-\alpha \nabla f(x^k)]-x^{k+1}, x^{k}-x^{k+1}\big\rangle \leq \frac{\gamma_1^k+\gamma_2^k}{1-2\gamma_2^k} \alpha^2\|\nabla f(x^{k})\|^2 + \frac{\gamma_3^k - \gamma_2^k}{1-2\gamma_2^k} \|x^{k+1} - x^{k}\|^2. 
	$$
Then, combining \eqref{eq:filc} with  the latter inequality yields 
	$$
	f(x^{k+1}) \leq f(x^{k}) + \frac{\alpha(\gamma_1^k + \gamma_2^k)}{1-2\gamma_2^k} \|\nabla f(x^k)\|^2 - \left[\frac{1-\gamma_2^k - \gamma_3^k}{\alpha (1-2\gamma_2^k)} - \frac{L}{2}\right] \|x^{k+1}-x^{k}\|^2.
	$$
Therefore, taking into account \eqref{eq:Tolerance} and \eqref{eq:NuRho}, we have \eqref{eq:MainIneqP}, which concludes the proof.
\end{proof}
The next result is an immediate consequence of Lemma~\ref{Le:MainConvP}.
\begin{corollary} \label{cor:fmonot}
	The sequence $(f(x^{k}) +\rho b_{k-1})_{k\in\mathbb{N}}$ is monotone non-increasing. In particular, $\inf_{k}(f(x^{k}) + \rho b_{k-1})= \inf_{k}f(x^{k})$.
\end{corollary}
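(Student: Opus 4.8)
The plan is to derive the corollary directly from the main inequality \eqref{eq:MainIneqP} by rearranging it into a telescoping-friendly form. Starting from Lemma~\ref{Le:MainConvP}, I would first discard the nonpositive term $-\nu\|x^{k+1}-x^k\|^2$ (using $\nu>0$ from \eqref{eq:NuRho}), which yields the weaker bound
\begin{equation*}
f(x^{k+1}) \leq f(x^k) + \rho(\gamma_1^k + \gamma_2^k)\|\nabla f(x^k)\|^2, \qquad \forall~k\in\mathbb{N}.
\end{equation*}
The crucial step is then to control the error term $\rho(\gamma_1^k+\gamma_2^k)\|\nabla f(x^k)\|^2$ by the decrement of the sequence $(b_k)$. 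Indeed, the tolerance rule \eqref{eq:Tolerance} gives $(\gamma_1^k+\gamma_2^k)\|\nabla f(x^k)\|^2 \leq a_k$, and condition \eqref{eq:akbk} gives $a_k \leq b_{k-1}-b_k$, so $\rho(\gamma_1^k+\gamma_2^k)\|\nabla f(x^k)\|^2 \leq \rho(b_{k-1}-b_k)$.

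Substituting this into the inequality above and adding $\rho b_k$ to both sides produces
\begin{equation*}
f(x^{k+1}) + \rho b_k \leq f(x^k) + \rho b_{k-1}, \qquad \forall~k\in\mathbb{N},
\end{equation*}
which is precisely the statement that the sequence $(f(x^k)+\rho b_{k-1})_{k\in\mathbb{N}}$ is monotone non-increasing. This establishes the first assertion.

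For the second assertion, I would argue that subtracting a vanishing nonnegative quantity does not change the infimum. Since $(b_k)$ is nonincreasing and converges to zero, the shifted sequence $(b_{k-1})$ also converges to zero; combined with $\rho>0$, the term $\rho b_{k-1}$ is a nonnegative sequence tending to $0$. One direction is immediate: $f(x^k)\leq f(x^k)+\rho b_{k-1}$ gives $\inf_k f(x^k)\leq \inf_k(f(x^k)+\rho b_{k-1})$. For the reverse inequality, I would pass to a subsequence along which $f(x^{k})$ approaches its infimum and use $b_{k-1}\to 0$ to conclude that $f(x^{k})+\rho b_{k-1}$ approaches the same value, so $\inf_k(f(x^k)+\rho b_{k-1})\leq \inf_k f(x^k)$; alternatively one invokes the general fact that $\inf_k(c_k+d_k)=\inf_k c_k$ whenever $d_k\geq 0$ and $\liminf_k d_k=0$. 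I do not anticipate a genuine obstacle here: the entire argument is a short chain of substitutions, and the only point requiring a moment's care is correctly lining up the index shift in \eqref{eq:akbk} (the convention $b_{-1}>b_0$ ensures the $k=0$ case is valid) so that the telescoping cancellation in the monotonicity step goes through cleanly.
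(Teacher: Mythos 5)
Your proof is correct and follows essentially the same route as the paper: combine \eqref{eq:Tolerance} and \eqref{eq:akbk} with Lemma~\ref{Le:MainConvP} to obtain $f(x^{k+1})+\rho b_k \leq f(x^{k})+\rho b_{k-1}$, and then use $b_k \to 0$ to identify the two infima. One small caveat: the ``alternative'' general fact you cite, that $\inf_k(c_k+d_k)=\inf_k c_k$ whenever $d_k\geq 0$ and $\liminf_k d_k=0$, is false as stated (take $c_k$ alternating between $-1$ and $0$ with $d_k$ alternating between $1$ and $0$ in sync, so that $\inf_k(c_k+d_k)=0>-1=\inf_k c_k$); however, your primary subsequence argument, which uses the full limit $\rho b_{k-1}\to 0$, is sound, so the proof stands without that remark.
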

\begin{proof}
	Combining \eqref{eq:Tolerance} with \eqref{eq:MainIneqP}, we have  $f(x^{k+1}) \leq f(x^{k}) +\rho a_k$, for all ${k\in \mathbb{N}}$. Thus, taking into account that $(a_k)_{k\in\mathbb{N}}$ satisfies \eqref{eq:akbk}, we obtain   $f(x^{k+1}) + \rho{b_{k}} \leq f(x^{k}) + \rho{b_{k-1}}$, for all ${k\in \mathbb{N}}$,  implying the first statement. Since $(b_k)_{k\in\mathbb{N}}$ converges to zero, the second statement holds.
\end{proof}
Now, we are ready to state and prove a partial asymptotic convergence result on $(x^k)_{k\in\mathbb{N}}$.
\begin{theorem}\label{teo.Main}
	Assume that $-\infty <f^*$. If ${\bar x}\in C$ is a cluster point of the sequence $(x^k)_{k\in\mathbb{N}}$, then ${\bar x}$ is a stationary point for problem \eqref{eq:OptP}. 
\end{theorem}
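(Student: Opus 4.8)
The plan is to obtain stationarity by passing to the limit in the defining inequality of the inexact projection along a subsequence converging to $\bar{x}$. The argument proceeds in three stages: first show that consecutive iterates become arbitrarily close; then record a variational inequality holding at each step with a controllable error; and finally take limits, using continuity of $\nabla f$.

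First I would establish that $\|x^{k+1}-x^{k}\|\to 0$. Rearranging the master inequality \eqref{eq:MainIneqP} and summing from $0$ to $N$ gives
\[
\nu\sum_{k=0}^{N}\|x^{k+1}-x^{k}\|^2 \leq f(x^{0})-f(x^{N+1}) + \rho\sum_{k=0}^{N}(\gamma_1^k+\gamma_2^k)\|\nabla f(x^{k})\|^2.
\]
Since $f^*>-\infty$ and $x^{N+1}\in C$ force $f(x^{N+1})\geq f^*$, and since the right-most sum is bounded by \eqref{eq:ToleranceCond}, the partial sums on the left are uniformly bounded; as $\nu>0$, this yields $\sum_{k}\|x^{k+1}-x^{k}\|^2<+\infty$, hence $\|x^{k+1}-x^{k}\|\to 0$. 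In particular, if $(x^{k_j})_{j}$ is a subsequence with $x^{k_j}\to\bar{x}$, then $x^{k_j+1}\to\bar{x}$ as well.

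Next I would exploit the inexact projection. Applying item $(b)$ of Lemma~\ref{pr:cond} with $u=x^{k}$, $v=z^k=x^{k}-\alpha\nabla f(x^{k})$, $w=x^{k+1}$ and arbitrary $y\in C$, and using $\|z^k-x^{k}\|^2=\alpha^2\|\nabla f(x^{k})\|^2$, gives
\[
\big\langle z^k-x^{k+1},\, y-x^{k+1}\big\rangle \leq \frac{\gamma_1^k+\gamma_2^k}{1-2\gamma_2^k}\,\alpha^2\|\nabla f(x^{k})\|^2 + \frac{\gamma_3^k-\gamma_2^k}{1-2\gamma_2^k}\,\|x^{k+1}-x^{k}\|^2.
\]
Writing $z^k-x^{k+1}=(x^{k}-x^{k+1})-\alpha\nabla f(x^{k})$ and isolating the gradient term, I obtain a lower bound
\[
\alpha\big\langle\nabla f(x^{k}),\,y-x^{k+1}\big\rangle \geq \big\langle x^{k}-x^{k+1},\,y-x^{k+1}\big\rangle - \frac{\gamma_1^k+\gamma_2^k}{1-2\gamma_2^k}\alpha^2\|\nabla f(x^{k})\|^2 - \frac{\gamma_3^k-\gamma_2^k}{1-2\gamma_2^k}\|x^{k+1}-x^{k}\|^2.
\]
Passing to the limit along $k=k_j$, the first term on the right tends to $0$ since $x^{k_j}-x^{k_j+1}\to 0$ and $y-x^{k_j+1}$ stays bounded; the second tends to $0$ because $(\gamma_1^k+\gamma_2^k)\|\nabla f(x^{k})\|^2\to 0$ (terms of the convergent series in \eqref{eq:ToleranceCond}) while the denominator is bounded below by $1-2\bar{\gamma_2}>0$; and the third tends to $0$ as its coefficient is bounded (by \eqref{eq:Tolerance}, $\gamma_3^k<\bar{\gamma}$ and $0\le\gamma_2^k<\bar{\gamma_2}<1/2$) and $\|x^{k_j+1}-x^{k_j}\|\to 0$. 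On the left, continuity of $\nabla f$ with $x^{k_j},x^{k_j+1}\to\bar{x}$ yields $\alpha\langle\nabla f(\bar{x}),y-\bar{x}\rangle\geq 0$; dividing by $\alpha>0$ gives the stationarity condition \eqref{eq:StatPoint} for every $y\in C$.

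The delicate point is the simultaneous bookkeeping of the two iterates $x^{k_j}$ and $x^{k_j+1}$: stationarity is a statement about $\bar{x}$, whereas the projection inequality naturally involves both $x^{k}$ (inside the gradient and the error terms) and $x^{k+1}$ (the projection itself). The summability established in the first stage is precisely what forces both subsequences to the common limit $\bar{x}$ and what drives every error term to zero, while the uniform lower bound $1-2\bar{\gamma_2}>0$ on the denominators, guaranteed by \eqref{eq:Tolerance}, prevents those errors from blowing up in the limit.
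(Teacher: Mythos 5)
Your proof is correct and follows essentially the same route as the paper's: both establish $\sum_k\|x^{k+1}-x^k\|^2<+\infty$ by telescoping the descent inequality \eqref{eq:MainIneqP} against $f^*>-\infty$, then pass to the limit along the subsequence in the variational inequality furnished by item $(b)$ of Lemma~\ref{pr:cond}, using \eqref{eq:ToleranceCond} and continuity of $\nabla f$ to kill the error terms. The only cosmetic difference is that you isolate the gradient term algebraically before taking limits, whereas the paper takes limits in the projection inequality directly and rearranges afterwards.
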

\begin{proof}
By \eqref{eq:Tolerance}, we have $(\gamma_1^k + \gamma_2^k)\|\nabla f(x^k)\|^2\leq a_k$, for all ~${k\in \mathbb{N}}$. Thus,    Lemma~\ref{Le:MainConvP}  implies that that $\nu\|x^{k+1}-x^{k}\|^2 \leq [f(x^{k}) - f(x^{k+1})] +\rho a_k$, for all ~${k\in \mathbb{N}}$. Using \eqref{eq:akbk}, after some adjustments, we obtain $\|x^{k+1}-x^{k}\|^2 \leq \left[f(x^{k}) + \rho b_{k-1} \right]/{\nu}- \left[ f(x^{k+1}) + \rho b_{k} \right]/{\nu}$, for all $k\in \mathbb{N}$. Hence, due to   $f^*\leq \inf_{k}f(x^{k})$,  Corollary~\ref{cor:fmonot} implies $\sum_{\ell=0}^k\|x^{\ell+1}-x^{\ell}\|^2 \leq [f(x^{0}) + \rho b_{-1}]/{\nu}-  f^*/{\nu}$. Thus, we conclude that $\lim_{k\to +\infty}\|x^{k+1}-x^{k}\|=0$. Let ${\bar x}$ be a cluster point of $(x^k)_{k\in\mathbb{N}}$ and $(x^{k_j})_{j\in\mathbb{N}}$ a subsequence of $(x^k)_{k\in\mathbb{N}}$ such that $\lim_{j\to +\infty}x^{k_j}=~\bar{x}$. Since, $\lim_{j\to +\infty}(x^{k_j+1}- x^{k_j})=0$, we have $\lim_{j\to +\infty}x^{k_j+1}={\bar x}$. On the other hand, due to $x^{k_j+1} \in {\cal P}_C(\varphi_{\gamma^{k_j}}, x^{k_j}, z^{k_j})$, where $z^{k_j}:=x^{k_j}-\alpha \nabla f(x^{k_j})$, applying item~$(b)$ of Lemma~\ref{pr:cond} with $v=z^{k_j}$, $u=x^{k_j}$, $w=x^{k_j+1}$, and $\varphi_{\gamma} = \varphi_{\gamma^{k_j}},$ we obtain   
	$$
	\big\langle z^{k_j}-x^{k_j+1}, y-x^{k_j+1} \big\rangle \leq \frac{\gamma_1^{k_j} + \gamma_2^{k_j}}{1-2\gamma_2^{k_j}} \alpha^2\|\nabla f(x^{k_j})\|^2 + \frac{\gamma_3^{k_j}-\gamma_2^{k_j}}{1-2\gamma_2^{k_j}} \|x^{k_j+1} - x^{k_j}\|^2, \qquad \forall~y\in C.
	$$
	Thus, taking limits on both sides of the last  inequality, we conclude, by using \eqref{eq:ToleranceCond} and continuity of $\nabla f$, that $\big\langle[{\bar x}-\alpha \nabla f({\bar x})]-{\bar x}, y-{\bar x}\big\rangle\leq 0$, for all $y\in C$. Therefore, $\big\langle\nabla f({\bar x}), y-{\bar x}\big\rangle\geq 0$, for all $y\in C$, which implies that ${\bar {x}} \in C$ is a stationary point for problem~\eqref{eq:OptP}.
\end{proof}
In the following lemma, we establish a basic inequality satisfied by $(x^k)_{k\in\mathbb{N}}$. In particular, it will be useful to prove the full asymptotic convergence of $(x^k)_{k\in\mathbb{N}}$ under quasiconvexity of $f$.
\begin{lemma} \label{Le:FejerConv}
For each $x \in C$ and ${k\in \mathbb{N}}$, there holds
$$
\|x^{k+1}-x\|^2 \leq \|x^k-x\|^2+ 2\alpha \rho (\gamma_1^k + \gamma_2^k)\|\nabla f(x^{k})\|^2 +  2 \alpha \big[f(x^k) - f(x^{k+1}) + \langle \nabla f(x^k), x-x^k\rangle\big].
$$
\end{lemma}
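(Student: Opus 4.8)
The plan is to mirror the proof of Lemma~\ref{Le:MainConvP} by starting from item~$(b)$ of Lemma~\ref{pr:cond}, rather than from item~$(a)$, since item~$(b)$ produces the gradient coefficient in terms of $(\gamma_1^k+\gamma_2^k)$ exactly as it appears in the target inequality. Concretely, recall $x^{k+1}\in{\cal P}_C(\varphi_{\gamma^k},x^k,z^k)$ with $z^k=x^k-\alpha\nabla f(x^k)$, so $\|z^k-x^k\|^2=\alpha^2\|\nabla f(x^k)\|^2$. Applying item~$(b)$ of Lemma~\ref{pr:cond} with $u=x^k$, $v=z^k$, $w=x^{k+1}$, and the test point $y=x\in C$ gives
\begin{equation*}
\big\langle z^k-x^{k+1},\,x-x^{k+1}\big\rangle \leq \frac{\gamma_1^k+\gamma_2^k}{1-2\gamma_2^k}\,\alpha^2\|\nabla f(x^k)\|^2 + \frac{\gamma_3^k-\gamma_2^k}{1-2\gamma_2^k}\,\|x^{k+1}-x^k\|^2 .
\end{equation*}

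Next I would split the left-hand side using $z^k-x^{k+1}=(x^k-x^{k+1})-\alpha\nabla f(x^k)$ and rewrite $\langle x^k-x^{k+1},x-x^{k+1}\rangle$ by the polarization identity $\langle x^k-x^{k+1},x-x^{k+1}\rangle=\tfrac12\big(\|x^{k+1}-x^k\|^2+\|x^{k+1}-x\|^2-\|x^k-x\|^2\big)$. Multiplying through by $2$ and isolating $\|x^{k+1}-x\|^2$ turns the inequality into an upper bound for $\|x^{k+1}-x\|^2$ in terms of $\|x^k-x\|^2$, a multiple of $\|x^{k+1}-x^k\|^2$, the gradient term, and $2\alpha\langle\nabla f(x^k),x-x^{k+1}\rangle$. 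I then decompose $x-x^{k+1}=(x-x^k)+(x^k-x^{k+1})$ to isolate the desired term $2\alpha\langle\nabla f(x^k),x-x^k\rangle$, and bound the leftover $2\alpha\langle\nabla f(x^k),x^k-x^{k+1}\rangle$ via Lemma~\ref{Le:derivlipsch} applied with $x=x^k$, $y=x^{k+1}$, which yields $\langle\nabla f(x^k),x^k-x^{k+1}\rangle\leq f(x^k)-f(x^{k+1})+\tfrac{L}{2}\|x^{k+1}-x^k\|^2$. This is precisely where the function-value difference $2\alpha\big[f(x^k)-f(x^{k+1})\big]$ in the statement enters.

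After collecting terms, the coefficient multiplying $\|x^{k+1}-x^k\|^2$ simplifies to $\alpha L-\tfrac{1-2\gamma_3^k}{1-2\gamma_2^k}$, and the main point of the argument is to show this is nonpositive so the term may be discarded (it is $\leq 0$ and the norm is $\geq 0$). Using the tolerance bounds $\gamma_3^k<\bar{\gamma}$ and $\gamma_2^k\geq 0$ from \eqref{eq:Tolerance}, one has $\tfrac{1-2\gamma_3^k}{1-2\gamma_2^k}\geq 1-2\gamma_3^k>1-2\bar{\gamma}$, while the step-size restriction \eqref{eq:alpha} gives $\alpha L<1-2\bar{\gamma}$; combining these makes the coefficient strictly negative. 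I expect verifying this sign condition to be the main obstacle, since it is the only place that couples \eqref{eq:alpha} with the bounds on $\gamma_2^k,\gamma_3^k$. Finally, to match the gradient coefficient $2\alpha\rho(\gamma_1^k+\gamma_2^k)$, I use $\gamma_2^k<\bar{\gamma_2}$ to get $\tfrac{\alpha}{1-2\gamma_2^k}\leq\tfrac{\alpha}{1-2\bar{\gamma_2}}=\rho$ from \eqref{eq:NuRho}, so that $\tfrac{2\alpha^2(\gamma_1^k+\gamma_2^k)}{1-2\gamma_2^k}\leq 2\alpha\rho(\gamma_1^k+\gamma_2^k)$ because $\gamma_1^k+\gamma_2^k\geq 0$. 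Assembling these estimates yields exactly the claimed inequality.
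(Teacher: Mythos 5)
Your proposal is correct and follows essentially the same route as the paper's proof: the same application of item~$(b)$ of Lemma~\ref{pr:cond} with $u=x^k$, $v=z^k$, $w=x^{k+1}$, $y=x$, the same use of Lemma~\ref{Le:derivlipsch} to produce the term $f(x^k)-f(x^{k+1})+\tfrac{L}{2}\|x^{k+1}-x^k\|^2$, and the same sign analysis of the coefficient $\alpha L-\tfrac{1-2\gamma_3^k}{1-2\gamma_2^k}$ via \eqref{eq:Tolerance} and \eqref{eq:alpha}, together with the bound $\tfrac{\alpha}{1-2\gamma_2^k}\leq\rho$. The only difference is organizational: the paper starts from the norm expansion of $\|x^{k+1}-x\|^2$ and then inserts the projection bound, whereas you start from the projection bound and expand via the polarization identity, which is the same algebra in reverse order.
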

\begin{proof}
	Let $x\in C$. By using $z_k = x^{k}-\alpha \nabla f(x^{k})$, after some algebraic manipulations,  we have 
	\begin{align*} 
		\|x^{k+1}-x\|^2 = \|x^{k}-x\|^2 - \|x^{k+1}-x^{k}\|^2 + 2\big\langle z^k-x^{k+1},x-x^{k+1} \big\rangle   +  2\alpha \big\langle \nabla f(x^k),x-x^{k+1}\big\rangle.
	\end{align*}
	Since $x^{k+1} \in {\cal P}_C(\varphi_{\gamma^k}, x^{k}, z^{k})$, applying item~$(b)$ of Lemma~\ref{pr:cond} with $y=x$, $v=z^{k}$, $u=x^{k}$, $w=x^{k+1}$, and $\varphi_{\gamma} = \varphi_{\gamma^k},$ we obtain   
	$$
	\big\langle z^{k}-x^{k+1}, x-x^{k+1}\big\rangle \leq \frac{\gamma_1^k + \gamma_2^k}{1-2\gamma_2^k} \alpha^2\|\nabla f(x^{k})\|^2 + \frac{\gamma_3^k - \gamma_2^k}{1-2\gamma_2^k} \|x^{k+1} - x^{k}\|^2.
	$$
	On the other hand, since $\nabla f$ satisfies \eqref{eq:LipCond},  Lemma \ref{Le:derivlipsch} with $x=x^k$ and $y=x^{k+1}$ yields 	
	\begin{align*}
		\big\langle\nabla f(x^k),x-x^{k+1}\big\rangle &= \big\langle\nabla f(x^{k}), x^{k}-x^{k+1}\big\rangle + \big\langle\nabla f(x^k),x-x^{k}\big\rangle \notag\\
                                                          &\leq f(x^{k}) - f(x^{k+1}) + \frac{L}{2}\|x^{k+1}-x^{k}\|^2 + \big\langle\nabla f(x^k),x-x^{k}\big\rangle.                                                                      
	\end{align*}
	Combining last two inequities with the above equality, we conclude that 
	\begin{multline} \label{eq:fimi}
		\|x^{k+1}-x\|^2 \leq \|x^k-x\|^2 - \left[\frac{1-2\gamma_3^k}{1-2\gamma_2^k} - \alpha L \right] \|x^{k+1}-x^{k}\|^2 \\+ 2 \alpha^2\left(\frac{\gamma_1^k+\gamma_2^k}{1-2\gamma_2^k}\right)\|\nabla f(x^{k})\|^2 + 2 \alpha \left[f(x^k) - f(x^{k+1}) + \big\langle\nabla f(x^k), x-x^k \big\rangle\right].
	\end{multline}
	Taking into account \eqref{eq:Tolerance}, we have $0 \leq 1 - 2 \bar{\gamma_2}   \leq  1-2\gamma_2^k \leq 1$ and $1-2\gamma_3^k \geq 1-2 \bar{\gamma} \geq 0$. Hence, it follows from \eqref{eq:alpha} and \eqref{eq:NuRho} that 
	$$
	\frac{\alpha}{1-2\gamma_2^k} \leq \frac{\alpha}{1-2 \bar{\gamma_2}} = \rho, \qquad \qquad   \left[\frac{1-2\gamma_3^k}{1-2\gamma_2^k} - \alpha L \right] \geq \left[1-2\gamma_3^k - \alpha L \right] \geq \left[1-2 \bar{\gamma} - \alpha L \right] \geq 0.
	$$
	These inequalities, together with \eqref{eq:fimi}, imply  the desired inequality, which concludes the proof. 
\end{proof}
To proceed with the analysis of $(x^k)_{k\in\mathbb{N}}$, we also need  the following auxiliary set
\begin{equation*} \label{eq;SetT}
	T := \left\{x \in C: f(x) \leq \inf_{k}f(x^{k}), \quad {k\in \mathbb{N}}\right\}.
\end{equation*}
\begin{corollary} \label{cor:xkquasife}
	Assume that $f$ is a quasiconvex function. If $T \neq \varnothing$, then $(x^k)_{k\in\mathbb{N}}$ converges to a stationary point for problem \eqref{eq:OptP}. 
\end{corollary}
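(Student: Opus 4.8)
The plan is to show that the sequence $(x^k)_{k\in\mathbb{N}}$ is quasi-Fej\'er convergent to the set $T$ in the sense of Definition~\ref{def:QuasiFejer}, and then to invoke Theorem~\ref{teo.qf}. The natural starting point is the basic inequality of Lemma~\ref{Le:FejerConv}, specialized to a point $x \in T$. The role of quasiconvexity is precisely to control the inner-product term appearing there: if $x \in T$, then $f(x) \leq \inf_{k} f(x^{k}) \leq f(x^{k})$ for every $k$, so quasiconvexity of $f$ yields $\langle \nabla f(x^k), x-x^k\rangle \leq 0$. Discarding this nonpositive term in Lemma~\ref{Le:FejerConv} leaves
\[
\|x^{k+1}-x\|^2 \leq \|x^k-x\|^2 + 2\alpha\rho(\gamma_1^k+\gamma_2^k)\|\nabla f(x^k)\|^2 + 2\alpha\big[f(x^k)-f(x^{k+1})\big].
\]

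Next I would turn the right-hand side into a telescoping, summable perturbation. Using \eqref{eq:Tolerance} and \eqref{eq:akbk} to bound $(\gamma_1^k+\gamma_2^k)\|\nabla f(x^k)\|^2 \leq a_k \leq b_{k-1}-b_k$, and setting $g_k := f(x^k)+\rho b_{k-1}$, the inequality above becomes $\|x^{k+1}-x\|^2 \leq \|x^k-x\|^2 + 2\alpha(g_k-g_{k+1})$. By Corollary~\ref{cor:fmonot} the sequence $(g_k)_{k\in\mathbb{N}}$ is nonincreasing, so $\epsilon_k := 2\alpha(g_k-g_{k+1}) \geq 0$. The hypothesis $T \neq \varnothing$ forces $\inf_{k} f(x^{k}) > -\infty$ (otherwise $T$ would be empty), and since $\inf_{k} g_k = \inf_{k} f(x^{k})$ by Corollary~\ref{cor:fmonot}, the nonincreasing sequence $(g_k)$ is bounded below and hence convergent. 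Consequently $\sum_{k\in\mathbb{N}}\epsilon_k = 2\alpha\big(g_0 - \lim_{k} g_k\big) < +\infty$, which is exactly the quasi-Fej\'er condition relative to $T$.

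With quasi-Fej\'er convergence established, Theorem~\ref{teo.qf} guarantees that $(x^k)$ is bounded and therefore admits a cluster point ${\bar x}\in C$. To upgrade a single cluster point to full convergence, I would verify that ${\bar x}\in T$: since $b_{k-1}\to 0$ and $g_k \to \inf_{k}f(x^{k})$, the function values satisfy $f(x^k)\to \inf_{k}f(x^{k})$, so continuity of $f$ gives $f({\bar x})=\inf_{k}f(x^{k})$ and thus ${\bar x}\in T$. The second part of Theorem~\ref{teo.qf} then yields $\lim_{k} x^k = {\bar x}$. Finally, ${\bar x}$ is stationary for \eqref{eq:OptP} by Theorem~\ref{teo.Main}; equivalently, one may argue directly, since full convergence gives $\|x^{k+1}-x^k\|\to 0$, and passing to the limit in item~$(b)$ of Lemma~\ref{pr:cond} applied with $v=z^k$, $u=x^k$, $w=x^{k+1}$ (using \eqref{eq:ToleranceCond}, the continuity of $\nabla f$, and the boundedness of $1/(1-2\gamma_2^k)$) produces $\langle \nabla f({\bar x}), y-{\bar x}\rangle \geq 0$ for all $y\in C$.

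The main obstacle is the second paragraph: one must simultaneously exploit quasiconvexity (to remove the inner-product term), the admissibility condition \eqref{eq:akbk} on the exogenous sequences (to bound the tolerance term by a telescoping difference of the $b_k$), and the monotonicity supplied by Corollary~\ref{cor:fmonot} (to ensure the resulting perturbations $\epsilon_k$ are nonnegative and sum to a finite value). Getting all three ingredients to combine into a single nonnegative, summable $\epsilon_k$ is the delicate bookkeeping step; verifying that the cluster point lies in $T$, which is needed to invoke the convergence half of Theorem~\ref{teo.qf}, is a smaller but essential point.
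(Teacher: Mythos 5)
Your proposal is correct and follows essentially the same route as the paper's proof: quasiconvexity kills the inner-product term in Lemma~\ref{Le:FejerConv}, conditions \eqref{eq:Tolerance}--\eqref{eq:akbk} and Corollary~\ref{cor:fmonot} turn the remainder into nonnegative summable (telescoping) perturbations yielding quasi-Fej\'er convergence to $T$, and Theorem~\ref{teo.qf} plus membership of the cluster point in $T$ gives full convergence, with stationarity from Theorem~\ref{teo.Main}. Your bookkeeping for summability (via finiteness of $\inf_k f(x^k)$ forced by $T\neq\varnothing$) is a harmless variant of the paper's bound $\sum_{k=0}^{N}\epsilon_k \leq 2\alpha\bigl[f(x^0)-f(x)+\rho(b_{-1}-b_N)\bigr]$, and your optional direct stationarity argument simply reproduces the tail of the proof of Theorem~\ref{teo.Main}.
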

\begin{proof}
	Let $x \in T$. Since $f$ is a quasiconvex function and $f(x) \leq f(x^k)$ for all $k\in\mathbb{N}$, we have $\big\langle \nabla f(x^k), x-x^k\big\rangle \leq~0$, for all $k\in\mathbb{N}$. Thus, applying Lemma \ref{Le:FejerConv}, we conclude that 
	$$
	\|x^{k+1}-x\|^2 \leq \|x^k-x\|^2+ 2\alpha \rho (\gamma_1^k + \gamma_2^k)\|\nabla f(x^{k})\|^2 + \\ 2 \alpha \left[f(x^k) - f(x^{k+1}) \right], \qquad \forall~ {k\in \mathbb{N}}.
	$$
	Thus, using the first condition in \eqref{eq:Tolerance} and considering  that $(a_k)_{k\in\mathbb{N}}$ and $(b_k)_{k\in\mathbb{N}}$ satisfy \eqref{eq:akbk}, the latter inequality implies 
	\begin{equation}\label{eq:xkquasiFejer}
		\|x^{k+1}-x\|^2 \leq \|x^k-x\|^2 + 2 \alpha \left([f(x^{k}) + \rho b_{k-1}] - [f(x^{k+1}) + \rho b_k]\right), \quad \forall ~k \in \mathbb{N}.
	\end{equation}
	On the other hand, performing a sum of \eqref{eq:xkquasiFejer} for $k = 0, 1, \ldots, N$ and using that $x \in T$, we obtain
	\begin{equation} \label{eq:epsK}
		\sum_{k=0}^{N} \left( \big[f(x^{k}) + \rho b_{k-1}\big] - \big[f(x^{k+1}) + \rho b_k\big] \right) \leq f(x^{0}) - f(x) + \rho( b_{-1}-b_N), 
	\end{equation} 
	for any $N \in \mathbb{N}$. Therefore, \eqref{eq:xkquasiFejer} and \eqref{eq:epsK} imply that $(x^k)_{k\in\mathbb{N}}$ is quasi-Fej\'er convergent to $T$. Since by assumption $T \neq \varnothing$, it follows from Theorem \ref{teo.qf} that $(x^k)_{k\in\mathbb{N}}$ is bounded. Let $\bar{x}$ be a cluster point of $(x^k)_{k\in\mathbb{N}}$ and $(x^{k_j})_{j\in\mathbb{N}}$ a subsequence of $(x^k)_{k\in\mathbb{N}}$ such that $\lim_{j \to +\infty} x^{k_j} = \bar{x}$. Considering that $f$ is continuous, it follows from Corollary~\ref{cor:fmonot} that 
	$$
	\inf_{k}f(x^{k})= \inf_{k}\left(f(x^{k}) + \rho b_{k-1}\right)=  \lim_{j \to +\infty} \left(f(x^{k_j}) + \rho b_{k_j-1}\right)=\lim_{j \to +\infty} f(x^{k_j})= f(\bar{x}).
	$$
	Therefore $\bar{x} \in T$. Using again Theorem \ref{teo.qf}, we have that $(x^k)_{k\in\mathbb{N}}$ converges to $\bar{x}$, and the conclusion is obtained from Theorem \ref{teo.Main}.
\end{proof}
In the following, we present an important result, when $(x^k)_{k\in\mathbb{N}}$ has no cluster points. This result has already appeared in several papers studding  gradient method with exact projetion, see for example \cite{yunier_roman2010, KiwielMurty1996}; however, since its proof is very simple and concise, we include it here for the sake of completeness.
\begin{lemma}\label{le:quasi}
	 If $f$ is a quasiconvex function and $(x^k)_{k\in\mathbb{N}}$ has no cluster points then $\Omega^* = \varnothing$, $\lim_{k \to \infty} \|x^k\|= \infty$, and $\lim_{k \to \infty} f(x^k) = \inf \{f(x) : x \in C\}$.
\end{lemma}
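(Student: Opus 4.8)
The plan is to argue by contraposition against Corollary~\ref{cor:xkquasife} and then unpack the resulting emptiness of the auxiliary set $T$. First I would observe that the hypotheses of this lemma are precisely the negation of the conclusion of Corollary~\ref{cor:xkquasife}: since $f$ is quasiconvex, that corollary guarantees that whenever $T \neq \varnothing$ the sequence $(x^k)_{k\in\mathbb{N}}$ converges, and a convergent sequence certainly possesses a cluster point, namely its limit. As $(x^k)_{k\in\mathbb{N}}$ is assumed to have no cluster points, we must therefore have $T = \varnothing$. This single deduction is the engine of the whole proof.

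Next I would translate $T = \varnothing$ into a statement about infima. By definition $T = \{x\in C: f(x)\le \inf_k f(x^k)\}$, so $T = \varnothing$ means $f(x) > \inf_k f(x^k)$ for every $x\in C$, whence $f^* = \inf_{x\in C} f(x) \geq \inf_k f(x^k)$. Conversely, each $x^k$ lies in $C$, so $f^* \le f(x^k)$ for all $k$ and thus $f^* \le \inf_k f(x^k)$. Combining the two gives $f^* = \inf_k f(x^k)$. The claim $\Omega^* = \varnothing$ then follows immediately: any minimizer $x^*\in\Omega^*$ would satisfy $f(x^*) = f^* = \inf_k f(x^k)$, placing $x^*\in T$ and contradicting $T=\varnothing$.

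To obtain $\lim_{k\to\infty} f(x^k) = f^*$ I would invoke Corollary~\ref{cor:fmonot}, which asserts that $(f(x^k)+\rho b_{k-1})_{k\in\mathbb{N}}$ is monotone non-increasing with $\inf_k(f(x^k)+\rho b_{k-1}) = \inf_k f(x^k)$. A monotone non-increasing sequence converges to its infimum, so $f(x^k)+\rho b_{k-1} \to \inf_k f(x^k) = f^*$; since $b_{k-1}\to 0$ and $\rho>0$, this yields $f(x^k)\to f^*$, which is exactly $\lim_{k\to\infty} f(x^k) = \inf\{f(x): x\in C\}$.

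Finally, for $\lim_{k\to\infty}\|x^k\| = \infty$ I would argue directly from the no-cluster-point hypothesis, independently of quasiconvexity. If $\|x^k\|$ did not tend to infinity, then $\liminf_{k\to\infty}\|x^k\|<+\infty$, so there would exist a bounded subsequence $(x^{k_j})_{j\in\mathbb{N}}$; by the Bolzano--Weierstrass theorem this bounded subsequence of $\mathbb{R}^n$ admits a convergent sub-subsequence, whose limit would be a cluster point of $(x^k)_{k\in\mathbb{N}}$, a contradiction. Hence $\|x^k\|\to\infty$. I do not expect a genuine obstacle here; the only steps requiring care are the logical passage to $T=\varnothing$, which is delivered as a contrapositive of Corollary~\ref{cor:xkquasife}, and the identity $f^* = \inf_k f(x^k)$, which must be established by two matching inequalities rather than asserted.
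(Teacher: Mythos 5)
Your proof is correct and follows essentially the same route as the paper: both hinge on Corollary~\ref{cor:xkquasife}, using the no-cluster-point hypothesis to rule out membership in $T$ (the paper via two separate contradiction arguments, you via a single contrapositive yielding $T=\varnothing$), and both rely on the monotone convergence of $\bigl(f(x^k)+\rho b_{k-1}\bigr)_{k\in\mathbb{N}}$ from Corollary~\ref{cor:fmonot} — a step the paper leaves implicit in its claim that $\lim_{k\to\infty}f(x^k)$ exceeds $f^*$ only if some $\tilde{x}\in C$ satisfies $f(\tilde{x})\leq f(x^k)$ for all $k$, and which you make explicit. Your Bolzano--Weierstrass argument for $\lim_{k\to\infty}\|x^k\|=\infty$ simply spells out what the paper asserts in its first line.
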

\begin{proof}
	Since $(x^k)_{k\in\mathbb{N}}$ has no cluster points, then  $\lim_{k \to \infty} \|x^k\|= \infty$. Assume that problem \eqref{eq:OptP} has an optimum, say $\tilde{x}$, so $f(\tilde{x}) \leq f(x^k)$ for all $k$. Thus, $\tilde{x} \in T$. Using Corollary \ref{cor:xkquasife}, we have that $(x^k)_{k\in\mathbb{N}}$ is convergent, contradicting that $\lim_{k \to \infty} \|x^k\|= \infty$. Therefore, $\Omega^* = \varnothing$. Now, we claim that $\lim_{k \to \infty} f(x^k) = \inf \{f(x) : x \in C\}$. If $\lim_{k \to \infty} f(x^k) = -\infty$, the claim holds. Let $f^* = \inf_{x \in C} f(x)$. By contradiction, suppose that $\lim_{k \to \infty} f(x^k) > f^*$.  Then, there exists $\tilde{x} \in C$ such that $f(\tilde{x}) \leq f(x^k)$ for all $k$. Using Corollary \ref{cor:xkquasife}, we obtain  that  $(x^k)_{k\in\mathbb{N}}$ is convergent, contradicting again $\lim_{k \to \infty} \|x^k\|= \infty$, which conclude the proof.
\end{proof}

Finally, we presented the main convergence result when $f$ is pseudoconvex, which is a version of \cite[Corollary 3]{yunier_roman2010} for our algorithm, see also \cite[Proposition 5]{IusemSvaiter95}.
\begin{theorem} \label{teo:pseudo}
Assume that $f$ is a pseudoconvex function. Then,  $\Omega^* \neq \varnothing$ if and only if $(x^k)_{k\in\mathbb{N}}$ has at least one cluster point. Moreover,  $(x^k)_{k\in\mathbb{N}}$ converges to an optimum point if $\Omega^* \neq \varnothing$;  otherwise, $\lim_{k \to \infty} \|x^k\|= \infty$ and $\lim_{k \to \infty} f(x^k) = \inf \{f(x) : x \in C\}$.
\end{theorem}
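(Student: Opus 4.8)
The plan is to assemble the asymptotic facts already proved for quasiconvex objectives and to sharpen them using the one extra feature supplied by pseudoconvexity: for a pseudoconvex $f$, every stationary point is automatically a global minimizer. Concretely, if $\bar{x}\in C$ satisfies the stationarity condition \eqref{eq:StatPoint}, so that $\langle \nabla f(\bar{x}), x-\bar{x}\rangle \geq 0$ for all $x\in C$, then pseudoconvexity yields $f(\bar{x})\leq f(x)$ for every $x\in C$, whence $\bar{x}\in\Omega^*$. Since pseudoconvexity implies quasiconvexity, Theorem~\ref{teo.Main}, Corollary~\ref{cor:xkquasife} and Lemma~\ref{le:quasi} are all at my disposal.

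First I would establish the stated equivalence. For the implication that a cluster point forces $\Omega^*\neq\varnothing$: if $\bar{x}$ is a cluster point of $(x^k)_{k\in\mathbb{N}}$, then Theorem~\ref{teo.Main} makes $\bar{x}$ stationary, and the observation above places $\bar{x}$ in $\Omega^*$. For the converse, assume $\Omega^*\neq\varnothing$ and pick $\tilde{x}\in\Omega^*$; since each $x^k\in C$, we have $f(\tilde{x})=f^*\leq\inf_{k}f(x^k)$, so $\tilde{x}\in T$ and $T\neq\varnothing$. Corollary~\ref{cor:xkquasife} then gives convergence of $(x^k)_{k\in\mathbb{N}}$ to a stationary point, which in particular furnishes a cluster point.

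This same argument delivers the ``moreover'' clause at no extra cost: when $\Omega^*\neq\varnothing$, the limit point produced by Corollary~\ref{cor:xkquasife} is stationary, hence by the first-paragraph observation it lies in $\Omega^*$, so $(x^k)_{k\in\mathbb{N}}$ converges to an optimum point. It then remains only to treat the case $\Omega^*=\varnothing$: by the equivalence just proved, $(x^k)_{k\in\mathbb{N}}$ has no cluster points, and Lemma~\ref{le:quasi} applies verbatim to give $\lim_{k\to\infty}\|x^k\|=\infty$ and $\lim_{k\to\infty}f(x^k)=\inf\{f(x):x\in C\}$, as asserted.

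I do not anticipate a genuine obstacle, since the proof is essentially a recombination of the preceding results; the only substantive ingredient is the remark that pseudoconvexity upgrades stationarity to global optimality, which is precisely what separates this theorem from the purely quasiconvex Corollary~\ref{cor:xkquasife}. The one point I would be careful about is to enter Corollary~\ref{cor:xkquasife} through the auxiliary set $T$ rather than through $\Omega^*$ directly, since that corollary is phrased in terms of the hypothesis $T\neq\varnothing$.
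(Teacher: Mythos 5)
Your forward direction, your ``moreover'' clause, and your treatment of the case $\Omega^*=\varnothing$ via Lemma~\ref{le:quasi} all coincide with the paper's proof. The problem is in the reverse direction (cluster point $\Rightarrow \Omega^*\neq\varnothing$): you apply Theorem~\ref{teo.Main} to the cluster point $\bar{x}$, but Theorem~\ref{teo.Main} carries the explicit hypothesis $-\infty<f^*$, and that hypothesis is not available at this stage of the argument. You are in the middle of proving $\Omega^*\neq\varnothing$; since $\Omega^*\neq\varnothing$ forces $f^*>-\infty$, assuming finiteness of $f^*$ here amounts to assuming away exactly the case your implication must rule out. As written, the step ``cluster point $\Rightarrow$ stationary via Theorem~\ref{teo.Main}'' is therefore an unjustified citation, and this is a genuine gap.

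The paper avoids this by never invoking Theorem~\ref{teo.Main} directly in this direction. Given $x^{k_j}\to\bar{x}$, it first uses Corollary~\ref{cor:fmonot} (the sequence $(f(x^k)+\rho b_{k-1})_{k\in\mathbb{N}}$ is monotone non-increasing), the continuity of $f$, and $b_k\to 0$ to conclude $\inf_k f(x^k)=\lim_{j\to+\infty}f(x^{k_j})=f(\bar{x})>-\infty$, hence $\bar{x}\in T$. Only then does it apply Corollary~\ref{cor:xkquasife}, whose sole hypothesis is $T\neq\varnothing$, to get convergence to a stationary point, which pseudoconvexity upgrades to an optimum. In other words, the $T$-route that you were careful to use in the forward direction is also the device needed in the reverse direction: the cluster point itself supplies, via Corollary~\ref{cor:fmonot}, the finiteness that your invocation of Theorem~\ref{teo.Main} silently presupposes. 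Your proof is repaired by inserting exactly this step; without it, the argument does not go through from the results as stated.
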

\begin{proof}
Note that  pseudoconvex functions are quasiconvex. First assume that $\Omega^* \neq \varnothing$. In this case,  we have also $T \neq \varnothing$.   Thus, using Corollary \ref{cor:xkquasife}, we conclude that $(x^k)_{k\in\mathbb{N}}$ converges to a stationary point  of  problem \eqref{eq:OptP} and, in particular, $(x^k)_{k\in\mathbb{N}}$ has a cluster point.   Considering  that $f$ is pseudoconvex, this point is also an optimum point. Reciprocally, let $\bar{x}$ be a cluster point of $(x^k)_{k\in\mathbb{N}}$ and $(x^{k_j})_{j\in\mathbb{N}}$ a subsequence of $(x^k)_{k\in\mathbb{N}}$ such that $\lim_{j\to +\infty} x^{k_j} = \bar{x}$. Since, by Corollary~\ref{cor:fmonot},  $(f(x^{k}) +\rho b_{k-1})_{k\in\mathbb{N}}$ is monotone non-increasing, using the continuity of $f$, we have
$$ \inf_{k}f(x^{k})= \inf_{k}\left(f(x^{k}) + \rho b_{k-1}\right)=  \lim_{j \to +\infty} \left(f(x^{k_j}) + \rho b_{k_j-1}\right)=\lim_{j \to +\infty} f(x^{k_j})= f(\bar{x}).$$
	Therefore $\bar{x} \in T$. From Corollary~\ref{cor:xkquasife}, we obtain that $(x^k)_{k\in\mathbb{N}}$ converges to a stationary point $\tilde{x}$ of  problem \eqref{eq:OptP}. Thus, by \eqref{eq:StatPoint}, we have $\langle \nabla f(\tilde{x}), x - \tilde{x} \rangle \geq 0$ for all $x \in C$, which by the   pseudo-convexity of  $f$  implies   $f(x) \geq f(\tilde{x})$ for all $x \in C$. Therefore, $\bar{x}\in \Omega^*$ and $\Omega^* \neq \varnothing$. The last part of the theorem follows by  combining   the first one with  Lemma~\ref{le:quasi}.
\end{proof}
\subsection{Iteration-complexity bound} \label{SubSec:IntComp}
In this section, we establish some iteration-complexity bounds for the sequence $(x^k)_{k\in\mathbb{N}}$ generated by Algorithm~\ref{Alg:INP}.  For that, we take $x^* \in \Omega^*\neq  \varnothing $, set $f^*=f(x^*)$, and define the constant
$$
\eta := f(x^{0})-f^* + \rho b_{-1}.
$$ 
In  next result, we do not  assume any assumption on the  convexity of  the objective function.
\begin{theorem} \label{Teo:FCompP}
Let $\nu>0$ as in \eqref{eq:NuRho}. Then, for all $N\in \mathbb{N}$, there holds
\begin{equation} \label{eq:fcr}
	\min\left\{ \|x^{k+1}-x^{k}\|:~ \forall ~{k\in \mathbb{N}} \right\} \leq  \frac{\sqrt{\eta/\nu }}{\sqrt{N+1}}.
\end{equation} 
\end{theorem}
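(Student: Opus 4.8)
The plan is to derive a telescoping sum from the descent inequality \eqref{eq:MainIneqP} and then extract the minimum of the successive-difference norms. First I would rewrite Lemma~\ref{Le:MainConvP} in a form that isolates the term $\nu \|x^{k+1}-x^k\|^2$ on one side; using the tolerance bound $(\gamma_1^k+\gamma_2^k)\|\nabla f(x^k)\|^2 \leq a_k$ from \eqref{eq:Tolerance}, this gives
\begin{equation*}
	\nu \|x^{k+1}-x^k\|^2 \leq \big[f(x^k) + \rho b_{k-1}\big] - \big[f(x^{k+1}) + \rho b_k\big], \qquad \forall~k\in\mathbb{N},
\end{equation*}
exactly as in the argument already carried out inside the proof of Theorem~\ref{teo.Main}, where \eqref{eq:akbk} is used to convert $\rho a_k$ into the telescoping increment $\rho(b_{k-1}-b_k)$.

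Next I would sum this inequality over $k = 0, 1, \ldots, N$. The right-hand side telescopes to $\big[f(x^0) + \rho b_{-1}\big] - \big[f(x^{N+1}) + \rho b_N\big]$, which I would bound above using $f^* \leq f(x^{N+1})$ and $b_N \geq 0$, yielding
\begin{equation*}
	\nu \sum_{k=0}^{N} \|x^{k+1}-x^k\|^2 \leq f(x^0) - f^* + \rho b_{-1} = \eta.
\end{equation*}
Here I would invoke $f^* = f(x^*)$ with $x^* \in \Omega^* \neq \varnothing$ to guarantee $f^*$ is finite and serves as the lower bound, which is precisely why the hypothesis of a nonempty solution set is used.

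Finally, I would bound the minimum by the average: since each summand is nonnegative, the minimum over $k \in \{0,\ldots,N\}$ of $\|x^{k+1}-x^k\|^2$ is at most $\frac{1}{N+1}$ times the sum, so
\begin{equation*}
	\min_{0 \leq k \leq N} \|x^{k+1}-x^k\|^2 \leq \frac{1}{N+1}\sum_{k=0}^{N} \|x^{k+1}-x^k\|^2 \leq \frac{\eta}{\nu(N+1)}.
\end{equation*}
Taking square roots gives \eqref{eq:fcr}. I do not expect any serious obstacle here: the entire argument is a telescoping-plus-averaging computation, and all the analytical content (the descent estimate, the finiteness of $\nu > 0$, and the summability encoded in \eqref{eq:akbk}) is already supplied by Lemma~\ref{Le:MainConvP}, \eqref{eq:NuRho}, and Corollary~\ref{cor:fmonot}. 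The only point requiring mild care is the index bookkeeping in the telescoping sum and the implicit interpretation of the minimum in \eqref{eq:fcr} as being taken over $k \in \{0,1,\ldots,N\}$ rather than all of $\mathbb{N}$.
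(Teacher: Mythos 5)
Your proposal is correct and follows essentially the same route as the paper's own proof: the descent inequality of Lemma~\ref{Le:MainConvP} combined with \eqref{eq:Tolerance} and \eqref{eq:akbk} to get a telescoping bound, summation over $k=0,\ldots,N$ with $f^*\leq f(x^{N+1})$ and $b_N\geq 0$, and the min-versus-average argument followed by a square root. Your closing remark that the minimum in \eqref{eq:fcr} must be read as ranging over $k\in\{0,1,\ldots,N\}$ is exactly how the paper's proof interprets it as well.
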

\begin{proof}
 It follows from \eqref{eq:Tolerance} and \eqref{eq:MainIneqP} that $f(x^{k+1}) \leq f(x^{k}) +\rho a_k - \nu \|x^{k+1}-x^{k}\|^2$, for all ${k\in \mathbb{N}}$. Hence using  \eqref{eq:akbk}, we have 
$\|x^{k+1} - x^{k}\|^2 \leq \frac{1}{\nu} \left[\big(f(x^{k}) + \rho b_{k-1}\big) - \big(f(x^{k+1}) + \rho b_k\big)\right]$, for all ~${k\in \mathbb{N}}$. Thus, 
summing both sides  for $k = 0, 1, \ldots, N$ and using  that $f^* \leq f(x^k)$ for all ${k\in \mathbb{N}}$ , we obtain  
$$
\sum_{k=0}^N\|x^{k+1} - x^{k}\|^2 \leq \frac{1}{\nu} \big[f(x^{0}) -f^*+ \rho (b_{-1} - b_N) \big]\leq  \frac{1}{\nu} \big[f(x^{0}) -f^*+ \rho b_{-1} ) \big]=\eta/\nu.
$$
	Therefore, $(N+1) \min\left\{\|x^{k+1}-x^{k}\|^2:~k=0,1, \ldots, N \right\} \leq \eta/\nu$, which implies \eqref{eq:fcr}. 
\end{proof}
In the following, we present an iteration-complexity bound for the sequence $(x^k)_{k\in\mathbb{N}}$, for finding $\epsilon$-stationary points   of function $f$.
\begin{theorem}
 For every $N \in \mathbb{N}$, there holds
$$
\min_{k =0, 1, \ldots, N} \big\langle\nabla f(x^k), x^k-x \big\rangle \leq \left[\frac{1}{2 \alpha}\|x^0-x\|^2 +  \eta\right]  \frac{1}{N+1}, \qquad \forall~ x\in C.
$$
As a consequence, given $\epsilon>0$,  the maximum  number of iterations $N$ necessary for Algorithm~\ref{Alg:INP}  to generate an iterate $x^{\ell}$ such that $\big\langle\nabla f(x^{\ell}),x- x^{\ell} \big\rangle> -\epsilon$, for all $x\in C$,  is $N\geq [\frac{1}{2 \alpha}\|x^0-x\|^2 +  \eta]/ \epsilon-1$.
\end{theorem}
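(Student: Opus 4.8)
The plan is to start directly from the basic inequality in Lemma~\ref{Le:FejerConv} and recast it into a telescoping summable form. Fix $x\in C$. Writing $\langle \nabla f(x^k), x-x^k\rangle = -\langle \nabla f(x^k), x^k-x\rangle$, moving this term to the left-hand side and the term $\|x^{k+1}-x\|^2$ to the right, the inequality of Lemma~\ref{Le:FejerConv} rearranges to
$$
2\alpha\,\langle \nabla f(x^k), x^k-x\rangle \leq \left(\|x^k-x\|^2 - \|x^{k+1}-x\|^2\right) + 2\alpha\left(f(x^k)-f(x^{k+1})\right) + 2\alpha\rho\,(\gamma_1^k+\gamma_2^k)\|\nabla f(x^k)\|^2,
$$
for every $k\in\mathbb{N}$. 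The right-hand side is now a sum of two telescoping blocks plus a summable error term, which is precisely the structure needed for an $\mathcal{O}(1/(N+1))$ bound.

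Next I would sum this inequality over $k=0,1,\ldots,N$. The distance terms telescope to $\|x^0-x\|^2 - \|x^{N+1}-x\|^2 \leq \|x^0-x\|^2$, and the functional terms telescope to $f(x^0)-f(x^{N+1})\leq f(x^0)-f^*$, using that $f^*\leq f(x^{N+1})$ since $x^*\in\Omega^*\neq\varnothing$. For the error sum, the first condition in \eqref{eq:Tolerance} gives $(\gamma_1^k+\gamma_2^k)\|\nabla f(x^k)\|^2\leq a_k$, and Remark~\ref{re:csab} together with \eqref{eq:akbk} yields $\sum_{k\in\mathbb{N}} a_k < b_{-1}$; hence $\sum_{k=0}^{N}(\gamma_1^k+\gamma_2^k)\|\nabla f(x^k)\|^2 \leq b_{-1}$. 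Combining these three estimates, the summed inequality becomes
$$
2\alpha\sum_{k=0}^{N}\langle \nabla f(x^k), x^k-x\rangle \leq \|x^0-x\|^2 + 2\alpha\left(f(x^0)-f^* + \rho b_{-1}\right) = \|x^0-x\|^2 + 2\alpha\eta,
$$
where the last equality is exactly the definition of $\eta$.

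Dividing by $2\alpha$ and bounding the sum below by $(N+1)$ times its smallest term, namely $\sum_{k=0}^N \langle \nabla f(x^k), x^k-x\rangle \geq (N+1)\min_{0\leq k\leq N}\langle \nabla f(x^k), x^k-x\rangle$, delivers the displayed inequality. The consequence then follows by a direct calculation: choosing $N$ so that $[\frac{1}{2\alpha}\|x^0-x\|^2+\eta]/(N+1)\leq \epsilon$, that is $N\geq [\frac{1}{2\alpha}\|x^0-x\|^2+\eta]/\epsilon - 1$, forces the minimizing index $\ell$ to satisfy $\langle \nabla f(x^\ell), x^\ell-x\rangle \leq \epsilon$, which is equivalent to $\langle \nabla f(x^\ell), x-x^\ell\rangle \geq -\epsilon$.

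I do not anticipate a genuine obstacle here: the argument is a clean telescoping estimate driven entirely by Lemma~\ref{Le:FejerConv}. The only point requiring a little care is recognizing that the two separately controlled pieces---the error sum bounded by $\rho b_{-1}$ and the telescoped functional decrease bounded by $f(x^0)-f^*$---combine exactly into the constant $\eta$, and that the gradient-norm error sum must be dominated through \eqref{eq:Tolerance} and Remark~\ref{re:csab} rather than assumed summable a priori.
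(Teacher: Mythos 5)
Your proof is correct and follows essentially the same route as the paper's: both start from Lemma~\ref{Le:FejerConv}, control the error term via \eqref{eq:Tolerance} and \eqref{eq:akbk}, telescope the distance and functional terms over $k=0,\ldots,N$, and bound the minimum by the average. The only cosmetic difference is bookkeeping---the paper folds $\rho a_k$ into the telescoping quantity $\big(f(x^k)+\rho b_{k-1}\big)-\big(f(x^{k+1})+\rho b_k\big)$, whereas you bound $\sum_{k=0}^N a_k \leq b_{-1}$ separately via Remark~\ref{re:csab}---which yields the same constant $\eta$.
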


\begin{proof}
Using Lemma \ref{Le:FejerConv}  and \eqref{eq:Tolerance}, we obtain
$$
\big\langle\nabla f(x^k), x^k-x \big\rangle \leq \frac{1}{2 \alpha}\left[\|x^k-x^*\|^2 - \|x^{k+1}-x^*\|^2 \right]+ \left[\rho a_k + f(x^k) - f(x^{k+1})\right], 
$$
 for all ${k\in \mathbb{N}}$. Since  $(a_k)_{k\in\mathbb{N}}$ and  $(b_k)_{k\in\mathbb{N}}$ satisfy   \eqref{eq:akbk},  we conclude that 
	$$
	\big\langle\nabla f(x^k), x^k-x \big\rangle \leq \frac{1}{2 \alpha}\left[\|x^k-x^*\|^2 - \|x^{k+1}-x^*\|^2 \right] + \left[\left(f(x^{k}) + \rho b_{k-1}\right) - \left(f(x^{k+1}) + \rho b_k\right)\right].
 $$
Thus,  summing both sides  for $k = 0, 1, \ldots, N$ and using  that $f^* \leq f(x^k)$ for all ${k\in \mathbb{N}}$, we have 
	$$
	\sum_{k=0}^N \big\langle\nabla f(x^k), x^k-x \big\rangle \leq \frac{1}{2 \alpha}\|x^0-x\|^2 +  \big[f(x^{0}) -f^*+ \rho (b_{-1} -  b_N) \big]=\frac{1}{2 \alpha}\|x^0-x\|^2+\eta, 
	$$
which implies that  $(N+1) \min \left\{\big\langle\nabla f(x^k), x^k-x \big\rangle :~ k = 0, 1, \ldots, N \right\} \leq \frac{1}{2 \alpha}\|x^0-x\|^2 +  \eta,$
obtaining the first statement of the theorem.	The second statement follows trivially from the first one. 
\end{proof}
\subsubsection{Iteration-complexity bound under convexity} \label{SubSec:IntCompC}
Next result presents an iteration-complexity bound for $(f(x^k))_{k\in\mathbb{N}}$ when $f$ is a convex function. Similar bound for unconstrained problems can be found in \cite[Theorem~2.1.14]{Nesterov2004}.
\begin{theorem}
 Assume that $f$ is a convex function. Then, for every $N \in \mathbb{N}$, there holds
	$$\min\left\{f(x^k) - f^* : ~k = 1 \ldots, N\right\} \leq \frac{\|x^0 - x^*\| + 2 \alpha\rho b_{-1}}{2\alpha N}.$$
\end{theorem}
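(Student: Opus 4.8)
The plan is to exploit convexity together with the basic inequality from Lemma~\ref{Le:FejerConv}, telescoping a suitable combination to extract a bound on $f(x^k)-f^*$. First I would invoke convexity of $f$ at the point $x=x^*$, which by \eqref{eq:fstrong} with $\mu=0$ gives $\langle \nabla f(x^k), x^*-x^k\rangle \leq f(x^*)-f(x^k) = f^* - f(x^k)$, equivalently $\langle \nabla f(x^k), x^k-x^*\rangle \geq f(x^k)-f^*$. This is the key step that converts the gradient inner product appearing in Lemma~\ref{Le:FejerConv} into a functional gap.

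Next I would apply Lemma~\ref{Le:FejerConv} with $x=x^*$ and substitute this convexity estimate. The term $2\alpha[f(x^k)-f(x^{k+1})+\langle \nabla f(x^k),x^*-x^k\rangle]$ becomes, after using $\langle \nabla f(x^k),x^*-x^k\rangle \leq f^*-f(x^k)$, bounded by $2\alpha[f(x^k)-f(x^{k+1})+f^*-f(x^k)] = 2\alpha[f^*-f(x^{k+1})]$. Rearranging Lemma~\ref{Le:FejerConv} then yields an inequality of the shape
\begin{equation*}
	2\alpha\big[f(x^{k+1})-f^*\big] \leq \|x^k-x^*\|^2 - \|x^{k+1}-x^*\|^2 + 2\alpha\rho(\gamma_1^k+\gamma_2^k)\|\nabla f(x^k)\|^2,
\end{equation*}
for all $k\in\mathbb{N}$. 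The first two terms on the right telescope, and the last term is controlled by the first condition in \eqref{eq:Tolerance}, namely $(\gamma_1^k+\gamma_2^k)\|\nabla f(x^k)\|^2\leq a_k$ together with \eqref{eq:akbk}.

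Then I would sum this inequality over $k=0,1,\ldots,N-1$. The distance terms collapse to $\|x^0-x^*\|^2 - \|x^N-x^*\|^2 \leq \|x^0-x^*\|^2$, while the error terms sum to at most $2\alpha\rho\sum_{k}a_k \leq 2\alpha\rho\, b_{-1}$ by Remark~\ref{re:csab}. This produces
\begin{equation*}
	2\alpha\sum_{k=1}^{N}\big[f(x^k)-f^*\big] \leq \|x^0-x^*\|^2 + 2\alpha\rho\, b_{-1}.
\end{equation*}
Since each summand $f(x^k)-f^*$ is nonnegative and there are $N$ of them, the minimum over $k=1,\ldots,N$ is bounded by the average, giving the claimed $\mathcal{O}(1/N)$ rate after dividing by $2\alpha N$.

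The main obstacle I anticipate is purely bookkeeping rather than conceptual: tracking the index shift so that the telescoped functional gaps land on $f(x^{k+1})$ (hence the minimum being taken over $k=1,\ldots,N$ in the statement, not $k=0,\ldots,N$) and making sure the error-tolerance contribution is bounded \emph{uniformly} by $\rho b_{-1}$ via $\sum_k a_k < b_{-1}$. One should also note the statement writes $\|x^0-x^*\|$ where the derivation naturally produces $\|x^0-x^*\|^2$; I would present the squared quantity as what the argument delivers and treat the unsquared form as a typographical matter. No compactness or strong-convexity assumption is needed, consistent with the paper's stated goal of convergence results without compactness.
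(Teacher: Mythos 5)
Your proposal is correct and follows essentially the same route as the paper's own proof: convexity invoked at $x^*$, Lemma~\ref{Le:FejerConv} applied with $x=x^*$ combined with the tolerance condition \eqref{eq:Tolerance} and \eqref{eq:akbk}, a telescoping sum over $k=0,1,\ldots,N-1$, and bounding the minimum by the average. Your remark about the unsquared $\|x^0-x^*\|$ is also accurate --- the derivation yields $\|x^0-x^*\|^2$, and the paper's own final display carries the same typographical slip.
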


\begin{proof}
	Since $f$ is convex, we have $2\alpha \langle \nabla f(x^k), x^*-x^k \rangle \leq 2 \alpha\left[f(x^*) - f(x^k)\right]$. Thus, using \eqref{eq:Tolerance} and  Lemma \ref{Le:FejerConv} with $x = x^*$, we obtain $2 \alpha [f(x^{k+1}) - f^*] \leq \|x^k - x^*\|^2 - \|x^{k+1} - x^*\|^2 + 2\alpha\rho a_{k}$, for all $k\in \mathbb{N}$.  Performing the sum of the this  inequality for $k = 0, 1, \ldots, N - 1$, we have
	$$2 \alpha \sum_{k=0}^{N-1} \left[f(x^{k+1}) - f^*\right] \leq \|x^0 - x^*\|^2 - \|x^{N} - x^*\|^2 + 2 \alpha\rho \sum_{k=0}^{N-1} a_{k}.$$
	Since $\sum_{k\in \mathbb{N}} a_k < b_{-1}$, we have $2\alpha N \min\left\{f(x^k) - f^* : k =  1 \ldots, N\right\} \leq \|x^0 - x^*\| + 2 \alpha\rho b_{-1},$
	which implies  the desired inequality.
\end{proof}
\subsubsection{Iteration-complexity bound under strong  convexity} \label{SubSec:IntCompSC}
Our next goal is to show an iteration-complexity bound for $\left(f(x^k)\right)_{k\in\mathbb{N}}$ when $f$ is strongly convex. For this purpose, we first present an inequality that is a variation of \cite[Lemma 3.6]{bubeck2015}. 
\begin{lemma} \label{le:scl}
 Assume that $f$ is $\mu-$strongly convex.  Then, for all $k \in \mathbb{N}$, there holds
$$
f(x^{k+1}) - f^* \leq \frac{1}{\alpha} \big\langle x^k-x^{k+1}, x^k-x^* \big\rangle - \nu \|x^{k+1}-x^k\|^2 + \frac{\gamma_1^k + \gamma_2^k}{1-2\gamma_2^k} \alpha \|\nabla f(x^k)\|^2 - \frac{\mu}{2} \|x^k - x^*\|^2.
$$
\end{lemma}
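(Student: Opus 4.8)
The plan is to combine two pointwise estimates — the $\mu$-strong convexity inequality \eqref{eq:fstrong} evaluated at the pair $(x^k,x^*)$ and the descent inequality of Lemma~\ref{Le:derivlipsch} evaluated at $(x^k,x^{k+1})$ — and then to absorb the resulting gradient inner product by means of the inexact projection estimate in item~(b) of Lemma~\ref{pr:cond}.

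First I would write \eqref{eq:fstrong} with $x=x^k$, $y=x^*$ in the form
$$f(x^k)-f^* \le \langle \nabla f(x^k), x^k-x^*\rangle - \frac{\mu}{2}\|x^k-x^*\|^2,$$
and the descent estimate of Lemma~\ref{Le:derivlipsch} with $x=x^k$, $y=x^{k+1}$ as
$$f(x^{k+1})-f(x^k) \le \langle \nabla f(x^k), x^{k+1}-x^k\rangle + \frac{L}{2}\|x^{k+1}-x^k\|^2.$$
Adding these and merging the two inner products into $\langle \nabla f(x^k), x^{k+1}-x^*\rangle$ yields
$$f(x^{k+1})-f^* \le \langle \nabla f(x^k), x^{k+1}-x^*\rangle + \frac{L}{2}\|x^{k+1}-x^k\|^2 - \frac{\mu}{2}\|x^k-x^*\|^2.$$

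The core of the argument is the rewriting of $\langle \nabla f(x^k), x^{k+1}-x^*\rangle$. Using $z^k=x^k-\alpha\nabla f(x^k)$, so that $\alpha\nabla f(x^k)=x^k-z^k$, I would split $x^k-z^k=(x^k-x^{k+1})+(x^{k+1}-z^k)$. The piece $\langle x^{k+1}-z^k, x^{k+1}-x^*\rangle=\langle z^k-x^{k+1}, x^*-x^{k+1}\rangle$ is exactly the quantity controlled by item~(b) of Lemma~\ref{pr:cond} (applied with $v=z^k$, $u=x^k$, $w=x^{k+1}$, $y=x^*$), and since $\|z^k-x^k\|^2=\alpha^2\|\nabla f(x^k)\|^2$, it contributes the term $\tfrac{\gamma_1^k+\gamma_2^k}{1-2\gamma_2^k}\alpha\|\nabla f(x^k)\|^2$ (after division by $\alpha$) together with a multiple of $\|x^{k+1}-x^k\|^2$. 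The remaining piece $\langle x^k-x^{k+1}, x^{k+1}-x^*\rangle$ I would expand, via $x^{k+1}-x^*=(x^{k+1}-x^k)+(x^k-x^*)$, as $-\|x^k-x^{k+1}\|^2+\langle x^k-x^{k+1}, x^k-x^*\rangle$, which after division by $\alpha$ produces the leading inner product $\tfrac{1}{\alpha}\langle x^k-x^{k+1}, x^k-x^*\rangle$ of the claim.

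Collecting all contributions, the only remaining task is to verify that the total coefficient of $\|x^{k+1}-x^k\|^2$ is at most $-\nu$. This coefficient equals $\frac{L}{2}-\frac{1}{\alpha}\cdot\frac{1-\gamma_2^k-\gamma_3^k}{1-2\gamma_2^k}$, and the needed estimate $\frac{1-\gamma_2^k-\gamma_3^k}{1-2\gamma_2^k}\ge 1-\bar{\gamma_2}-\bar{\gamma}$ is precisely the one already exploited in the proof of Lemma~\ref{Le:MainConvP}; it follows from \eqref{eq:Tolerance} (using $\gamma_2^k<\bar{\gamma_2}$ and $\gamma_3^k<\bar{\gamma}<1/2$) and the definition of $\nu$ in \eqref{eq:NuRho}. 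I expect this sign-bookkeeping of the $\|x^{k+1}-x^k\|^2$ coefficient — rather than any conceptual difficulty — to be the main place requiring care, since it is where the projection inexactness, the step size restriction \eqref{eq:alpha}, and the constant $\nu$ interact. Substituting this bound gives exactly the asserted inequality.
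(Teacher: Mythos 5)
Your proposal is correct and follows essentially the same route as the paper's own proof: the same combination of the strong-convexity inequality at $(x^k,x^*)$ with the descent lemma at $(x^k,x^{k+1})$, the same splitting of $\alpha\nabla f(x^k)=x^k-z^k$ into $(x^k-x^{k+1})+(x^{k+1}-z^k)$ with item~(b) of Lemma~\ref{pr:cond} applied at $y=x^*$, $v=z^k$, $u=x^k$, $w=x^{k+1}$, and the same final bound $\frac{1-\gamma_2^k-\gamma_3^k}{1-2\gamma_2^k}\geq 1-\bar{\gamma_2}-\bar{\gamma}$ coming from \eqref{eq:Tolerance} and the definition of $\nu$ in \eqref{eq:NuRho}. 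Nothing is missing.
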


\begin{proof}
Applying Lemma~\ref{Le:derivlipsch} with $x=x^k$ and $y=x^{k+1}$, and  then using  \eqref{eq:fstrong}, we obtain 
	\begin{align} \label{eq:fstli}
		f(x^{k+1}) - f^* &\leq \big\langle\nabla f(x^k), x^{k+1}-x^k \big\rangle + \frac{L}{2} \|x^k-x^{k+1}\|^2 + \big\langle\nabla f(x^k), x^k - x^* \big\rangle - \frac{\mu}{2} \|x^k - x^*\|^2. \nonumber \\
		&= \big\langle\nabla f(x^k), x^{k+1}-x^* \big\rangle + \frac{L}{2} \|x^k-x^{k+1}\|^2 - \frac{\mu}{2} \|x^k - x^*\|^2.
	\end{align}
	On the order hand, due to $z^k = x^{k}-\alpha \nabla f(x^{k})$, after some algebraic manipulations, we have
\begin{equation} \label{eq:siip}
	  \big\langle\nabla f(x^k), x^{k+1}-x^* \big\rangle = \frac{1}{\alpha} \big\langle x^k - x^{k+1},   x^{k+1} - x^* \big\rangle +  \frac{1}{\alpha} \big\langle   z^k-x^{k+1}, x^*- x^{k+1} \big\rangle.
\end{equation}
Since $x^{k+1} \in {\cal P}_C(\varphi_{\gamma^k}, x^{k}, z^k)$, applying item~$(b)$ of Lemma~\ref{pr:cond} with $y=x^*$, $v=z^k$, $u=x^k$, $w=x^{k+1}$, and $\varphi_{\gamma} = \varphi_{\gamma^k}$, we obtain
	\begin{equation}\label{eq:sipl}
	 \big\langle z^k-x^{k+1}, x^*- x^{k+1} \big\rangle \leq \frac{\gamma_1^k + \gamma_2^k}{1-2\gamma_2^k}  \|z^k-x^k\|^2 + \frac{\gamma_3^k - \gamma_2^k}{1-2\gamma_2^k} \|x^{k+1} - x^{k}\|^2.
	\end{equation}
	Taking into account that $\langle x^k - x^{k+1}, x^{k+1} - x^* \rangle = \langle x^k - x^{k+1}, x^k - x^* \rangle - \|x^k - x^{k+1}\|^2$ and $z^k -x^{k}=-\alpha \nabla f(x^{k})$, the combination of \eqref{eq:siip} and \eqref{eq:sipl} yields
\begin{multline*}
 \big\langle \nabla f(x^k), x^{k+1}-x^* \big\rangle \leq \frac{1}{\alpha} \big\langle x^k - x^{k+1}, x^k - x^*\big\rangle - \frac{1}{\alpha} \left(\frac{1-\gamma_3^k-\gamma_2^k}{1-2\gamma_2^k} \right) \|x^{k+1} - x^{k}\|^2  \\+  \frac{\gamma_1^k + \gamma_2^k}{1-2\gamma_2^k} \alpha\|\nabla f(x^{k})\|^2.
\end{multline*}
Therefore, considering that $0\leq \gamma_2^k \leq \bar{\gamma_2}$ and $0 \leq \gamma_3^k \leq \bar{\gamma}$, the desired inequality follows by using the first condition in \eqref{eq:NuRho} and \eqref{eq:fstli}.
\end{proof}
To proceed, we assume that the sequence $(x^k)_{k\in\mathbb{N}}$ converges to a point $x^* \in \Omega^*$. Moreover, to establish the iteration-complexity bound for $(f(x^k))_{k\in\mathbb{N}}$, we also take 
\begin{equation} \label{eq;gto}
	\gamma_1^k = \gamma_2^k=0, \qquad \forall~{k\in \mathbb{N}}.
\end{equation}
\begin{theorem} \label{teo:compxkstr}
 Assume that $f$ is $\mu-$strongly convex on $\mathbb{R}^n$. Then, the following inequality holds
	\begin{equation} \label{eq:limitxk}
		\|x^{k+1}-x^*\|^2 \leq \left(1-\alpha\mu \right) \|x^k - x^*\|^2.
	\end{equation}
\end{theorem}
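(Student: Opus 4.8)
The plan is to obtain the contraction directly from the strong-convexity estimate in Lemma~\ref{le:scl}, specialized to the choice \eqref{eq;gto} of $\gamma_1^k=\gamma_2^k=0$. With this choice the term $\frac{\gamma_1^k+\gamma_2^k}{1-2\gamma_2^k}\alpha\|\nabla f(x^k)\|^2$ vanishes, so Lemma~\ref{le:scl} reduces to
$$
f(x^{k+1})-f^*\leq\frac{1}{\alpha}\big\langle x^k-x^{k+1},\,x^k-x^*\big\rangle-\nu\|x^{k+1}-x^k\|^2-\frac{\mu}{2}\|x^k-x^*\|^2.
$$
Because $x^*\in\Omega^*$ and $f^*=f(x^*)$, the left-hand side is nonnegative, hence so is the whole right-hand side; this nonnegativity is what I will exploit.

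Next I would eliminate the inner-product term with the three-point identity $\|x^{k+1}-x^*\|^2=\|x^k-x^*\|^2-2\langle x^k-x^{k+1},x^k-x^*\rangle+\|x^{k+1}-x^k\|^2$, that is, substitute $\langle x^k-x^{k+1},x^k-x^*\rangle=\tfrac12\big(\|x^k-x^*\|^2+\|x^{k+1}-x^k\|^2-\|x^{k+1}-x^*\|^2\big)$. Using the nonnegativity above and collecting terms gives, after multiplying by $2\alpha$,
$$
\|x^{k+1}-x^*\|^2\leq(1-\alpha\mu)\|x^k-x^*\|^2+(1-2\alpha\nu)\|x^{k+1}-x^k\|^2.
$$
Thus \eqref{eq:limitxk} will follow the moment the coefficient $(1-2\alpha\nu)$ of $\|x^{k+1}-x^k\|^2$ is shown to be nonpositive, so that this term can be discarded.

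The main obstacle is precisely this sign check, and it is more delicate than it first appears: the constant $-\nu$ recorded in Lemma~\ref{le:scl} is only a crude estimate, and under the sole step-size hypothesis \eqref{eq:alpha} one need not have $\nu\geq\tfrac{1}{2\alpha}$ when $\bar{\gamma_2}>0$ (this would require the stronger bound $\alpha L\leq 1-2\bar{\gamma_2}-2\bar{\gamma}$). To circumvent this I would retrace the last step of the proof of Lemma~\ref{le:scl} keeping $\gamma_2^k=0$ exactly: there the true coefficient of $\|x^{k+1}-x^k\|^2$ is $\frac{L}{2}-\frac{1-\gamma_3^k}{\alpha}$, so after adding the $+\frac{1}{2\alpha}$ produced by the identity the net coefficient becomes $\frac{L}{2}-\frac{1-2\gamma_3^k}{2\alpha}$. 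Invoking \eqref{eq:alpha} together with $\gamma_3^k<\bar{\gamma}$ gives $\alpha L<1-2\bar{\gamma}<1-2\gamma_3^k$, whence $\frac{L}{2}-\frac{1-2\gamma_3^k}{2\alpha}<0$. Dropping this nonpositive term and rearranging then yields $\|x^{k+1}-x^*\|^2\leq(1-\alpha\mu)\|x^k-x^*\|^2$, which is exactly \eqref{eq:limitxk}. The one point to be careful about is not to rely on the crude constant $\nu$ alone; it is the sharper $\gamma_2^k=0$ coefficient, driven by the step-size restriction, that makes the $\|x^{k+1}-x^k\|^2$ term disappear.
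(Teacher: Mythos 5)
Your proof is correct, and structurally it follows the same route as the paper's: both specialize Lemma~\ref{le:scl} via \eqref{eq;gto}, use $f(x^{k+1})-f^*\geq 0$, substitute the three-point identity $\|x^{k+1}-x^*\|^2=\|x^k-x^*\|^2+\|x^{k+1}-x^k\|^2-2\langle x^k-x^{k+1},x^k-x^*\rangle$, and then discard the $\|x^{k+1}-x^k\|^2$ term by a sign argument. The difference lies entirely in that final sign check, and your version is the sharper one. The paper simply asserts $1-2\alpha\nu<0$; but with $\nu$ as in \eqref{eq:NuRho} one computes $1-2\alpha\nu=2\bar{\gamma_2}+2\bar{\gamma}-1+\alpha L$, and \eqref{eq:alpha} only yields $1-2\alpha\nu<2\bar{\gamma_2}$, so the paper's assertion is unjustified whenever $\bar{\gamma_2}>0$ --- precisely the defect you flagged (it would require the stronger restriction $\alpha L<1-2\bar{\gamma_2}-2\bar{\gamma}$, or the tacit convention that $\bar{\gamma_2}$ be taken arbitrarily small, which is compatible with \eqref{eq;gto} since $\rho$ plays no role here, but is never stated). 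Your repair --- rerunning the last step of Lemma~\ref{le:scl} with $\gamma_2^k=0$ kept exact, so that the coefficient of $\|x^{k+1}-x^k\|^2$ is $\frac{L}{2}-\frac{1-\gamma_3^k}{\alpha}$ and, after the identity, the net coefficient $\frac{L}{2}-\frac{1-2\gamma_3^k}{2\alpha}$ is negative because $\alpha L<1-2\bar{\gamma}\leq 1-2\gamma_3^k$ --- closes the argument using only \eqref{eq:alpha} and \eqref{eq:Tolerance}. In short: same skeleton, but your proof is the one that is actually valid under the theorem's stated hypotheses, whereas the paper's, as written, silently relies on an additional restriction involving $\bar{\gamma_2}$.
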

\begin{proof}
Using Lemma~\ref{le:scl}, considering \eqref{eq;gto} and $f^* \leq f(x^k)$ for all ${k\in \mathbb{N}}$, we have that $-2\big\langle  x^k-x^{k+1},x^k-x^* \big\rangle \leq - \alpha\mu\|x^k - x^*\|^2 -2\alpha\nu \|x^{k+1}-x^k\|^2$. Therefore, since $1-2\alpha\nu<0$ and taking into account  that $\|x^{k+1}-x^*\|^2 = \|x^k-x^*\|^2 + \|x^{k+1}-x^k\|^2 - 2 \big\langle x^k-x^{k+1},x^k-x^*\big\rangle$, we obtain    \eqref{eq:limitxk}. 
\end{proof}

\begin{remark}
Letting   $\alpha = 1/L$, \eqref{eq:limitxk}  yields  $\|x^{k+1}-x^*\|^2 \leq \left(1- \mu/L \right)^{k+1} \|x^0 - x^*\|^2$, which is closed related to \cite[Theorem 3.10]{bubeck2015}. See also \cite[Theorem 2.1.15]{Nesterov2004}, for the unconstrained case.
\end{remark}

\section{GInexPM employing Armijo's step size rule} \label{SubSec:Armijo}
The aim of this section is to present the GInexPM for solving problem \eqref{eq:OptP} employing Armijo's search. Our method is an inexact version of the projected gradient method proposed in \cite{Iusem2003}, see also \cite{yunier_roman2010}. Let us remind the iteration of the projected gradient method: If   the current iterate  $x^k$ is a non-stationary point of problem \eqref{eq:OptP}, then set $z^k=x^k-\alpha_k\nabla f(x^k)$, compute 
$w^k={\cal P}_C(z^k)$ and define  the next iterate as $x^{k+1}=x^k+\tau_k(w^k-x^k)$, where ${\cal P}_C$ is the exact projection operator on $C$, $\alpha_k$ and $\tau_k$ are suitable positive constants. In this scheme, $d^k=w^k-x^k$ is a feasible descent direction for $f$ at $x^k$. Thus, an Armijo's search is employed  to compute   $\tau_k$ so that it decreases the function $f$ at $x^{k+1}$. In the same way as in  Algorithm~\ref{Alg:INP}, we  propose to compute a feasible inexact projection instead of calculating the exact one. Hence, for guarantee that the feasible direction is also a descent direction, we need to use $\varphi_{\gamma}: {\mathbb R}^n\times {\mathbb R}^n\to {\mathbb R}_{+}$ satisfying
\begin{equation*} \label{eq:phiA} 
		\varphi_{\gamma_3}(u, w) \leq \gamma_3 \|w-u\|^2, \qquad \forall~ u,  w \in {\mathbb R}^n,
\end{equation*}
as the error tolerance function, i.e., we take $\gamma_1 = \gamma_2 = 0$  in Definition~\ref{def:InexactProj}. Hence,  {\it the inexact projection ${\cal P}_C(\varphi_{\gamma}, u, v)$ onto $C$ of $v\in {\mathbb R}^n$ relative to $u\in C$ with error tolerance $\varphi_{\gamma}(u, \cdot)$} becomes
\begin{equation} \label{eq:ProjwA} 
		{\cal P}_C(\varphi_{\gamma}, u, v) := \left\{w\in C:~\big\langle v-w, y-w \big\rangle \leq \varphi_{\gamma_3}(u, w), \quad \forall~ y \in C \right\}.
\end{equation}
{\it Also, we assume that the mapping  $(\gamma_3, u, w) \mapsto \varphi_{\gamma_3}(u, w)$ is continuous}. In this case, the gradient algorithm with inexact projection employing Armijo's step size rule is formally defined as follows.
\begin{algorithm}\caption{GInexPM employing Armijo search}\label{Alg:Armijo}
	\begin{algorithmic}[h]
		\State \textbf{Step 0:} Choose $\sigma \in (0, 1)$, $\tau \in (0, 1)$ and $0 < \alpha_{\min} \leq \alpha_{\max}$. Let $x^0\in C$ and set $k=0$.
		\State \textbf{Step 1:} Choose an error tolerance function  $\varphi_{\gamma}$, real numbers $\alpha_k$ and $\gamma_3^k$ such that  
			\begin{equation} \label{eq:TolArm}
				\alpha_{\min}\leq \alpha_k \leq \alpha_{\max}, \qquad \qquad 0 \leq \gamma_3^k \leq \bar{\gamma} < \frac{1}{2},
			\end{equation}
			and take $w^{k}$ as any feasible inexact projection of $z^k := x^{k}-\alpha_k \nabla f(x^{k})$ onto $C$ relative to $x^{k}$ with error tolerance $\varphi_{\gamma_3^k}(x^k, w^k)$, i.e., 
			\begin{equation} \label{eq:PInexArm}
				w^k \in {\cal P}_C\left(\varphi_{\gamma_3^k}, x^{k}, z^k \right).
			\end{equation}
			If $w^k= x^k$, then {\bf stop}; otherwise, set $\tau_k :=\tau^{j_k}$, where 
			\begin{equation}\label{eq:TkArm}
				j_k :=\min \left\{j \in \mathbb{N}:~  f\big(x^{k}+ \tau^{j}(w^k - x^{k})\big) \leq f(x^{k}) + \sigma \tau^{j}\big\langle\nabla f(x^{k}), w^k - x^{k} \big\rangle \right\},
			\end{equation}
			and set the next iterate $x^{k+1}$ as  
			\begin{equation} \label{eq:IterArm}
				x^{k+1} = x^{k} + \tau_k (w^k - x^{k}).
			\end{equation}
		\State \textbf{Step 2:} Set $k\gets k+1$, and go to \textbf{Step~1}.
	\end{algorithmic} 
\end{algorithm}

 Let us describe the main features of Algorithm~\ref{Alg:Armijo}. In Step~1, we check if $w^k= x^k$. In this case, as we will show, the current iterate $x^{k}$ is a solution of problem \eqref{eq:OptP},  otherwise, we choose $\alpha_k$ such that $\alpha_{\min} \leq \alpha_k \leq \alpha_{\max}$. Then, by using some inner procedure, we compute $w^k$ as any feasible inexact projection of $z^k = x_k - \alpha_k\nabla f(x_k)$ onto the feasible set $C$ relative to  $x^k$, i.e., $w^{k}\in {\cal P}_C(\varphi_{\gamma_3^k}, x^{k}, z^k)$. Recall that, if $\gamma_3^k=0$, then ${\cal P}_C(\varphi_{0}, x^{k}, z^k)$ is the exact projection, see Remark~\ref{rem: welldef}. Therefore, Algorithm~\ref{Alg:Armijo}  can be seen as inexact version of the algorithm considered in \cite{yunier_roman2010, Iusem2003}. In the remainder of this section, we study the asymptotic properties and  iteration-complexity bounds related to Algorithm~\ref{Alg:Armijo}. We begin by presenting some import properties of the inexact projection \eqref{eq:ProjwA}.
\begin{lemma} \label{Le:ProjProperty}
	Let $x \in C$, $\alpha > 0$, and $0\leq \gamma_3 \leq \bar{\gamma} < 1/2$. Take $w(\alpha)$ as any feasible inexact projection of $z(\alpha) = x-\alpha \nabla f(x)$ onto $C$ relative to $x$ with error tolerance $\varphi_{\gamma_3}(x, w(\alpha))$, i.e., $w(\alpha) \in {\cal P}_C(\varphi_{\gamma_3}, x, z(\alpha))$. Then, there hold:
	\begin{itemize}
		\item[(i)] $\big\langle \nabla f(x), w(\alpha) - x\big\rangle \leq \left(\dfrac{\gamma_3-1}{\alpha}\right) \|w(\alpha)-x\|^2$;
		\item[(ii)] the point $x$ is stationary for problem \eqref{eq:OptP} if and only if $x \in {\cal P}_C(\varphi_{\gamma_3}, x, z(\alpha))$;
		\item[(iii)] if $x$ is a nonstationary point for problem \eqref{eq:OptP}, then $\big\langle \nabla f(x), w(\alpha) - x \big\rangle < 0$. Equivalently, if there exists ${\bar \alpha}>0$ such that $\big\langle \nabla f(x), w({\bar \alpha}) - x \big\rangle \geq 0$, then $x$ is stationary for problem \eqref{eq:OptP}.  
	\end{itemize}
\end{lemma}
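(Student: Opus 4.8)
The plan is to prove the three items of Lemma~\ref{Le:ProjProperty} in order, using the inexact projection inequality \eqref{eq:ProjwA} as the main tool. For item~(i), I would start from the defining inequality of $w(\alpha) \in {\cal P}_C(\varphi_{\gamma_3}, x, z(\alpha))$ with the natural choice $y = x$. Since $z(\alpha) = x - \alpha \nabla f(x)$, this gives
\begin{equation*}
	\big\langle x - \alpha \nabla f(x) - w(\alpha),\, x - w(\alpha) \big\rangle \leq \varphi_{\gamma_3}(x, w(\alpha)) \leq \gamma_3 \|w(\alpha) - x\|^2.
\end{equation*}
Writing $x - w(\alpha) = -(w(\alpha) - x)$ and expanding the inner product, the left-hand side becomes $\|w(\alpha) - x\|^2 + \alpha \langle \nabla f(x), w(\alpha) - x \rangle$. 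Rearranging then yields $\alpha \langle \nabla f(x), w(\alpha) - x \rangle \leq (\gamma_3 - 1)\|w(\alpha) - x\|^2$, which is exactly~(i) after dividing by $\alpha > 0$.

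For item~(ii), I would argue both implications. For the ``if'' direction, suppose $x \in {\cal P}_C(\varphi_{\gamma_3}, x, z(\alpha))$, i.e.\ $w(\alpha) = x$ is an admissible choice; then plugging $w = x$ into \eqref{eq:ProjwA} (noting $\varphi_{\gamma_3}(x,x) \leq \gamma_3 \|x - x\|^2 = 0$) gives $\langle x - \alpha \nabla f(x) - x,\, y - x \rangle \leq 0$ for all $y \in C$, that is $-\alpha \langle \nabla f(x), y - x \rangle \leq 0$; since $\alpha > 0$ this is precisely the stationarity condition \eqref{eq:StatPoint}. For the converse, if $x$ is stationary, then $\langle \nabla f(x), y - x \rangle \geq 0$ for all $y \in C$, which reverses the computation to show $x$ itself satisfies the defining inequality \eqref{eq:ProjwA}, hence $x \in {\cal P}_C(\varphi_{\gamma_3}, x, z(\alpha))$.

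Item~(iii) follows by combining~(i) and~(ii). If $x$ is nonstationary, then $w(\alpha) \neq x$ by~(ii), so $\|w(\alpha) - x\|^2 > 0$; since $\gamma_3 - 1 < 0$ (because $\gamma_3 \leq \bar\gamma < 1/2 < 1$) and $\alpha > 0$, item~(i) gives a strictly negative upper bound, hence $\langle \nabla f(x), w(\alpha) - x \rangle < 0$. The equivalent contrapositive formulation is immediate: if some $\bar\alpha$ yields $\langle \nabla f(x), w(\bar\alpha) - x \rangle \geq 0$, then $x$ cannot be nonstationary, so it is stationary.

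I do not anticipate a serious obstacle here, as each item reduces to a short algebraic manipulation of the projection inequality; the only point requiring care is the degenerate case $w(\alpha) = x$ in item~(ii), where one must verify that $\varphi_{\gamma_3}(x, x) = 0$ so that the inequality collapses cleanly onto the stationarity definition. The strictness in item~(iii) hinges on the assumption $\bar\gamma < 1/2$ guaranteeing $\gamma_3 - 1 < 0$ together with $w(\alpha) \neq x$ supplied by~(ii), so the logical dependence of~(iii) on both earlier items should be stated explicitly.
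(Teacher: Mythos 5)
Your proof is correct and follows essentially the same route as the paper: item (i) comes from the projection inequality \eqref{eq:ProjwA} with $y=x$ (the paper invokes item (b) of Lemma~\ref{pr:cond} with $\gamma_1=\gamma_2=0$, which is exactly the same inequality), and item (iii) is obtained by combining (i) and (ii) precisely as you do. The only divergence is in the direction ``$x$ stationary $\Rightarrow x \in {\cal P}_C(\varphi_{\gamma_3}, x, z(\alpha))$'' of item (ii): the paper combines stationarity with item (i) to force $w(\alpha)=x$ and hence membership, whereas you verify the defining inequality for $w=x$ directly from $\varphi_{\gamma_3}(x,x)\geq 0$ and stationarity --- both arguments are valid.
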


\begin{proof}
	Since $w(\alpha) \in {\cal P}_C(\varphi_{\gamma_3}, x, z(\alpha))$, applying item $(b)$ of Lemma \ref{pr:cond} with $\gamma_1 = \gamma_2 = 0$, $w=w(\alpha)$, $v=z(\alpha)$, $y=x$, and $u=x$, we obtain $\big\langle x-\alpha \nabla f(x)-w(\alpha), x-w(\alpha) \big\rangle \leq \gamma_3 \|w(\alpha)-x\|^2$ which, after some algebraic manipulations, yields the inequality of item $(i)$. For proving item $(ii)$, we first assume that $x$ is stationary for problem \eqref{eq:OptP}. In this case, \eqref{eq:StatPoint} implies that $\big\langle \nabla f(x), w(\alpha)-x \big\rangle \geq 0$. Thus, considering that $\alpha > 0$ and $0 \leq \gamma_3 \leq \bar{\gamma} < 1/2$, the last inequality together item $(i)$ implies that  $w(\alpha) = x$. Therefore, $x \in {\cal P}_C(\varphi_{\gamma_3}, x, z(\alpha))$. Reciprocally, if $x \in {\cal P}_C(\varphi_{\gamma_3}, x, z(\alpha))$ then applying item $(b)$ of Lemma \eqref{pr:cond} with $\gamma_1 = \gamma_2 = 0$, $w = x$, $v = z(\alpha)$, and $u = x$, we obtain $\big\langle  x-\alpha \nabla f(x)-x, y-x \big\rangle \leq 0$, for all $y \in C$. Considering that $\alpha > 0$, the last inequality is equivalently to $\big\langle \nabla f(x), y-x \big\rangle \geq 0$, for all $y \in C$. Thus, according to \eqref{eq:StatPoint}, we conclude that $x$ is stationary point for problem \eqref{eq:OptP}. Finally, for prove item $(iii)$, take $x$ a nonstationary point for problem \eqref{eq:OptP}. Thus  item $(ii)$ implies that $x \notin {\cal P}_C(\varphi_{\gamma_3}, x, z(\alpha))$ and taking into account that $w(\alpha) \in {\cal P}_C(\varphi_{\gamma_3}, x, z(\alpha))$, we conclude that $x \neq w(\alpha)$. Therefore, due to $\alpha > 0$ and $0 < \gamma_3 \leq \bar{\gamma}$, it follows from item $(i)$ that $\big\langle \nabla f(x), w(\alpha) - x \big\rangle < 0$ and the first sentence is proved. Finally, note that the second statement is the contrapositive of the first sentence. 
\end{proof}

The next result follows from item~$(iii)$ of Lemma~\ref{Le:ProjProperty} and its proof will be omitted.  

\begin{proposition}\label{pr:wellstepsize}
	Let $\sigma \in (0, 1)$, $x \in C$, $\alpha > 0$, and $0\leq \lambda < {\bar \lambda}$. Take $w(\alpha)$ as any feasible inexact projection of $z(\alpha) = x-\alpha \nabla f(x)$ onto $C$ relative to $x$ with error tolerance $\varphi_{\lambda}(x, w(\alpha))$, i.e., $w(\alpha) \in {\cal P}_C(\varphi_{\lambda}, x, z(\alpha))$. If $x$ is a nonstationary point for problem \eqref{eq:OptP}, then there exists $\delta > 0$ such that $f\big(x+\zeta [w(\alpha) - x]\big) < f(x) + \sigma \zeta \big\langle \nabla f(x), w(\alpha) - x \big\rangle$, for all $\zeta \in (0, \delta)$.
\end{proposition}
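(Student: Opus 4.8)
The plan is to establish the existence of $\delta>0$ via the standard Armijo-descent argument, leveraging the fact that $w(\alpha)-x$ is a genuine descent direction for $f$ at the nonstationary point $x$. The crucial input is item~$(iii)$ of Lemma~\ref{Le:ProjProperty}, which guarantees that $\big\langle \nabla f(x), w(\alpha)-x\big\rangle < 0$ precisely because $x$ is nonstationary and $0\leq \lambda <\bar{\lambda}$. Set $d:=w(\alpha)-x$ so that $\big\langle \nabla f(x), d\big\rangle < 0$, and denote this negative quantity by $\beta:=\big\langle \nabla f(x), d\big\rangle$.

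First I would exploit differentiability of $f$ along the ray $\zeta \mapsto x+\zeta d$. By definition of the directional derivative,
\begin{equation*}
	\lim_{\zeta \to 0^+} \frac{f(x+\zeta d) - f(x)}{\zeta} = \big\langle \nabla f(x), d\big\rangle = \beta.
\end{equation*}
Since $\sigma \in (0,1)$ and $\beta < 0$, we have $\sigma\beta > \beta$, i.e. $\sigma\beta - \beta = (\sigma-1)\beta > 0$. The idea is that for small $\zeta$ the true decrease $f(x+\zeta d)-f(x)$ behaves like $\beta\zeta$, which is more negative than the Armijo target $\sigma\beta\zeta$; the gap $(1-\sigma)|\beta|\zeta$ is what makes the strict inequality hold.

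Concretely, I would rewrite the desired inequality $f(x+\zeta d) < f(x) + \sigma\zeta\beta$ as $\frac{f(x+\zeta d)-f(x)}{\zeta} < \sigma\beta$ for $\zeta>0$. Taking the limit as $\zeta \to 0^+$, the left-hand side tends to $\beta$, and since $\beta < \sigma\beta$ (as $\beta<0$ and $\sigma<1$), the limit is strictly below $\sigma\beta$. By the definition of limit, there exists $\delta>0$ such that for all $\zeta\in(0,\delta)$ the quotient stays below $\sigma\beta$, which is exactly the claimed inequality after multiplying through by $\zeta>0$.

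The argument is almost entirely routine once item~$(iii)$ supplies strict negativity of the directional derivative; the only point requiring care is the direction of the inequality $\beta<\sigma\beta$, which hinges on $\beta$ being strictly negative together with $\sigma\in(0,1)$. I expect no genuine obstacle here, which is presumably why the authors state that the proof may be omitted: it is the classical well-definedness argument for the Armijo line search, transplanted verbatim to the inexact-projection setting, with the descent property of $d=w(\alpha)-x$ being the sole nontrivial ingredient and already secured by Lemma~\ref{Le:ProjProperty}.
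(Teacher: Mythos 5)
Your proposal is correct and follows exactly the route the paper intends: the paper explicitly omits this proof, stating only that it ``follows from item~$(iii)$ of Lemma~\ref{Le:ProjProperty}'', and your write-up is precisely the standard argument being alluded to --- item~$(iii)$ gives $\beta:=\langle\nabla f(x),w(\alpha)-x\rangle<0$, and then the differentiability of $f$ together with $\beta<\sigma\beta$ (valid since $\beta<0$ and $\sigma\in(0,1)$) yields the strict Armijo inequality for all sufficiently small $\zeta>0$. Nothing is missing; this is the classical well-definedness argument for the Armijo step, correctly transplanted to the inexact-projection setting.
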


In the following, we establish the well definition of Algorithm \ref{Alg:Armijo}. 

\begin{proposition}\label{pr:welldef}
	The sequence $(x^k)_{k\in\mathbb{N}}$ generated by Algorithm \ref{Alg:Armijo} is well defined and belongs to $C$. 
\end{proposition}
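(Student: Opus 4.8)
The plan is to establish well-definedness by induction on $k$, showing that at each step the algorithm either halts legitimately or produces a next iterate $x^{k+1}$ that lies in $C$. The base case is immediate: we are given $x^0 \in C$ in Step~0. For the inductive step, assume $x^k \in C$ is produced; I must show that Step~1 terminates (i.e., the minimum defining $j_k$ in \eqref{eq:TkArm} is attained) and that the resulting $x^{k+1}$ from \eqref{eq:IterArm} belongs to $C$.

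First I would handle the case $w^k = x^k$, in which the algorithm stops; here there is nothing to prove about a next iterate, so the sequence is (finitely) well defined. The substantive case is $w^k \neq x^k$. By Remark~\ref{rem: welldef} the inexact projection set ${\cal P}_C(\varphi_{\gamma_3^k}, x^k, z^k)$ is nonempty, so a valid $w^k \in C$ can indeed be chosen in \eqref{eq:PInexArm}, and the constraints \eqref{eq:TolArm} on $\alpha_k$ and $\gamma_3^k$ are satisfiable. Since $x^k$ is nonstationary precisely when $w^k \neq x^k$ (this is item~$(ii)$ of Lemma~\ref{Le:ProjProperty}), I can invoke item~$(iii)$ of that lemma to conclude $\big\langle \nabla f(x^k), w^k - x^k \big\rangle < 0$, so that $d^k := w^k - x^k$ is a genuine descent direction.

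The key step is to verify that the Armijo line search in \eqref{eq:TkArm} terminates after finitely many backtracking steps, so that $j_k$ is well defined as a natural number. This is exactly what Proposition~\ref{pr:wellstepsize} delivers: taking $x = x^k$, $\alpha = \alpha_k$, and $\lambda = \gamma_3^k$, since $x^k$ is nonstationary there exists $\delta > 0$ such that the strict Armijo inequality $f\big(x^k + \zeta\, d^k\big) < f(x^k) + \sigma \zeta \big\langle \nabla f(x^k), d^k \big\rangle$ holds for all $\zeta \in (0, \delta)$. Because $\tau \in (0,1)$, the powers $\tau^j$ decrease to zero, so $\tau^j < \delta$ for all sufficiently large $j$; hence the set in \eqref{eq:TkArm} is nonempty and its minimum $j_k$ exists. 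Finally, setting $\tau_k = \tau^{j_k} \in (0,1]$, the new iterate $x^{k+1} = x^k + \tau_k(w^k - x^k) = (1-\tau_k)x^k + \tau_k w^k$ is a convex combination of the two points $x^k, w^k \in C$; by convexity of $C$ it follows that $x^{k+1} \in C$, closing the induction.

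The main obstacle is the finite termination of the backtracking search, but this has been fully isolated into Proposition~\ref{pr:wellstepsize}, whose proof in turn rests on the strict descent guaranteed by item~$(iii)$ of Lemma~\ref{Le:ProjProperty}; once that proposition is in hand, the remaining arguments are routine (nonemptiness of the projection set and convexity of $C$). Thus the only genuine content is the interplay between nonstationarity and strict descent, which is precisely what allows the Armijo condition to be met for small step sizes.
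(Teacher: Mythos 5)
Your proof is correct and follows essentially the same route as the paper's: induction on $k$, nonemptiness of the inexact projection set via Remark~\ref{rem: welldef}, strict descent at a nonstationary iterate via Lemma~\ref{Le:ProjProperty}, finite termination of the Armijo backtracking via Proposition~\ref{pr:wellstepsize}, and feasibility of $x^{k+1}$ by convexity of $C$. The only cosmetic difference is that the paper obtains $\big\langle \nabla f(x^k), w^k-x^k\big\rangle<0$ directly from item~$(i)$ of Lemma~\ref{Le:ProjProperty}, whereas you pass through items~$(ii)$ and $(iii)$ (which is also how the paper argues in Proposition~\ref{pr:statArm}); both are valid.
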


\begin{proof}
	Proceeding by induction, let $x^0 \in C$, $\alpha_{\min} \leq \alpha_0 \leq \alpha_{\max}$ and $0 \leq \gamma_3^0 < \bar{\gamma}$. Set $z^0 = x^{0}-\alpha_0 \nabla f(x^{0})$. Since $C$ is closed and convex, it follows from Remark~\ref{rem: welldef} that ${\cal P}_C(\varphi_{\gamma_3^0}, x^0, z^0)\neq \varnothing$. Thus, we can take $w^0 \in P_C(\varphi_{\gamma_3^k}, x^{0}, z^0)$. If Algorithm \ref{Alg:Armijo} does not stop, i.e., $w^0 \neq x^0$, then it follows from item~$(i)$ of Lemma~\ref{Le:ProjProperty} that $\langle \nabla f(x^0), w^0- x^0 \rangle < 0$. In this case, Propoposition~\ref{pr:wellstepsize} implies that it is possible to compute $\tau_0 \in (0, 1]$ satisfying \eqref{eq:TkArm}, for $k = 0$. Therefore, $x^1 = x^{0} + \tau_0 (w^0 - x^{0})$ in \eqref{eq:IterArm} is well defined and, considering that $x^0, w^0\in C$ and $\tau_0 \in (0, 1]$, we have $x^1 \in C$. The induction step is completely analogous, implying that $(x^k)_{k\in\mathbb{N}}$ is well defined and belongs to $C$. 
\end{proof}

\subsection{Asymptotic convergence analysis} \label{SubSec:CAnalysisA}
The aim of this section is to study asymptotic convergence properties related to Algorithm~\ref{Alg:Armijo}. We begin by presenting a partial convergence result of the sequence $(x^k)_{k\in\mathbb{N}}$ generated by Algorithm~\ref{Alg:Armijo}.  
\begin{proposition} \label{pr:statArm}
Algorithm~\ref{Alg:Armijo} finishes in a finite number of iterations at a stationary point of problem \eqref{eq:OptP}, or generates an infinite sequence $(x^k)_{k\in\mathbb{N}}$ for which $\left(f(x^k)\right)_{k\in\mathbb{N}}$ is a  decreasing sequence and every cluster point of $(x^k)_{k\in\mathbb{N}}$ is stationary for problem~\eqref{eq:OptP}.
\end{proposition}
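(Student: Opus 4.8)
The plan is to dispose of the finite-termination case first and then analyze the infinite-sequence case. If Algorithm~\ref{Alg:Armijo} stops at some iteration $k$, this happens precisely because $w^k = x^k$, i.e.\ $x^k \in {\cal P}_C(\varphi_{\gamma_3^k}, x^k, z^k)$; by item~$(ii)$ of Lemma~\ref{Le:ProjProperty} this is equivalent to $x^k$ being stationary for problem~\eqref{eq:OptP}, which settles the first alternative immediately.

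For the infinite case, I would first establish strict monotone decrease of $\left(f(x^k)\right)_{k\in\mathbb{N}}$. Since the algorithm does not stop, each $w^k \neq x^k$, so $x^k$ is nonstationary and item~$(iii)$ of Lemma~\ref{Le:ProjProperty} gives $\langle \nabla f(x^k), w^k - x^k \rangle < 0$. Proposition~\ref{pr:wellstepsize} guarantees the Armijo condition \eqref{eq:TkArm} is satisfiable with some $\tau_k = \tau^{j_k} \in (0,1]$, so from \eqref{eq:TkArm} and \eqref{eq:IterArm} we get $f(x^{k+1}) \leq f(x^k) + \sigma \tau_k \langle \nabla f(x^k), w^k - x^k\rangle < f(x^k)$, proving strict decrease.

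The heart of the proof is the cluster-point statement. Let $\bar{x}$ be a cluster point along a subsequence $x^{k_j} \to \bar{x}$. Passing to a further subsequence I would assume $\alpha_{k_j} \to \bar{\alpha} \in [\alpha_{\min}, \alpha_{\max}]$ and $\gamma_3^{k_j} \to \bar{\gamma}_3 \in [0, \bar{\gamma}]$, using \eqref{eq:TolArm} and compactness. Arguing by contradiction, suppose $\bar{x}$ is \emph{not} stationary. The standard strategy (as in \cite{Iusem2003, yunier_roman2010}) is to show this forces a uniform decrease in $f$ along the subsequence that contradicts the fact that $f(x^k)$ is monotone and, being a decreasing subsequence with $\bar{x}$ a cluster point where $f$ is continuous, satisfies $f(x^{k_j}) \to f(\bar{x})$ and hence $f(x^k) \to f(\bar{x})$ over the whole sequence, so consecutive differences tend to zero. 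Concretely, I would extract a limiting inexact projection $w(\bar{\alpha})$ of $\bar{x} - \bar{\alpha}\nabla f(\bar{x})$ relative to $\bar{x}$, using the assumed continuity of $(\gamma_3, u, w) \mapsto \varphi_{\gamma_3}(u,w)$ together with the defining inequality \eqref{eq:ProjwA} to pass to the limit and conclude $w(\bar{\alpha}) \in {\cal P}_C(\varphi_{\bar{\gamma}_3}, \bar{x}, \bar{z})$; nonstationarity of $\bar{x}$ then yields $\langle \nabla f(\bar{x}), w(\bar{\alpha}) - \bar{x}\rangle < 0$ via item~$(iii)$ of Lemma~\ref{Le:ProjProperty}.

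The main obstacle, and the subtle point I would handle carefully, is the behavior of the Armijo step lengths $\tau_{k_j}$. Two subcases arise. If $\limsup_j \tau_{k_j} > 0$, then the Armijo inequality $f(x^{k_j+1}) - f(x^{k_j}) \leq \sigma \tau_{k_j} \langle \nabla f(x^{k_j}), w^{k_j} - x^{k_j}\rangle$ forces the right-hand side to zero, contradicting a strictly negative limiting directional derivative bounded away from zero. If instead $\tau_{k_j} \to 0$, then for each $j$ the predecessor stepsize $\tau_{k_j}/\tau$ failed the Armijo test, which gives $f\big(x^{k_j} + (\tau_{k_j}/\tau)(w^{k_j} - x^{k_j})\big) > f(x^{k_j}) + \sigma (\tau_{k_j}/\tau)\langle \nabla f(x^{k_j}), w^{k_j} - x^{k_j}\rangle$; dividing by $\tau_{k_j}/\tau$, taking the limit, and applying the mean value theorem together with continuity of $\nabla f$ yields $\langle \nabla f(\bar{x}), w(\bar{\alpha}) - \bar{x}\rangle \geq \sigma \langle \nabla f(\bar{x}), w(\bar{\alpha}) - \bar{x}\rangle$, which since $\sigma \in (0,1)$ forces $\langle \nabla f(\bar{x}), w(\bar{\alpha}) - \bar{x}\rangle \geq 0$, contradicting strict negativity. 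Either way the assumption of nonstationarity is untenable, so $\bar{x}$ is stationary, completing the proof.
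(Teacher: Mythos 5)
Your proposal is correct and takes essentially the same route as the paper's proof: the same three-part structure of finite termination via Lemma~\ref{Le:ProjProperty}(ii), strict decrease of $\left(f(x^k)\right)_{k\in\mathbb{N}}$ via item~(iii) and the Armijo rule, and a cluster-point argument that extracts convergent subsequences of $w^{k_j}$, $\tau_{k_j}$, $\alpha_{k_j}$, passes to the limit in \eqref{eq:ProjwA} using the continuity of $(\gamma_3,u,w)\mapsto\varphi_{\gamma_3}(u,w)$, and splits on the behavior of the step sizes. The only deviations are cosmetic --- you wrap the cluster-point step in a contradiction where the paper argues directly (showing $\big\langle \nabla f(\bar{x}), \bar{w}-\bar{x}\big\rangle = 0$ in both step-size cases and then invoking the contrapositive in item~(iii)), and you handle the vanishing-step case with the mean value theorem where the paper uses difference quotients along fixed powers $\tau^q$ --- though note that ``extracting a limiting inexact projection'' presupposes that $(w^{k_j})_{j\in\mathbb{N}}$ is bounded, a step your sketch glosses over and the paper establishes explicitly via Lemma~\ref{pr:cond}(a) (it also follows from Lemma~\ref{Le:ProjProperty}(i) and Cauchy--Schwarz), since $C$ is not assumed compact.
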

\begin{proof}
	First we assume that $(x^k)_{k\in\mathbb{N}}$ is finite. In this case, according to Step~1,   there exists $k \in \mathbb{N}$ such that $x^k = w^k \in {\cal P}_C(\varphi_{\gamma_3^k}, x^{k}, z^k)$, where $z^k = x^{k}-\alpha_k \nabla f(x^{k})$, $0 \leq \gamma_3^k \leq \bar{\gamma}$ and $\alpha_k > 0$. Therefore, applying the second statement of item $(ii)$ of Lemma~\ref{Le:ProjProperty} with $x = x^{k}$, $\alpha = \alpha_k$, and $\gamma_3 = \gamma_3^k$, we conclude that $x^k$ is stationary for problem~\eqref{eq:OptP}. Now, we assume that $(x^k)_{k\in\mathbb{N}}$ is infinite. Thus, according to Step~1, $x^k \neq w^k$ for all $k = 0,1, \ldots$. Consequently, applying item $(ii)$ of Lemma~\ref{Le:ProjProperty} with $x = x^{k}$, $\alpha = \alpha_k$, and $\gamma_3 = \gamma_3^k$, we have that $x^k$ is nonstationary for problem~\eqref{eq:OptP}. Hence,  item $(iii)$ of Lemma~\ref{Le:ProjProperty} implies that $\big\langle \nabla f(x^k), w^k- x^k \big\rangle < 0$, for all $k = 0, 1, \ldots$. Therefore, it follows from \eqref{eq:TkArm} and \eqref{eq:IterArm} that 
\begin{equation}\label{eq:fmotArm1}
	0 < -\sigma\tau_{k} \big\langle\nabla f(x^{k}), w^{k}-x^{k} \big\rangle \leq f(x^{k}) - f(x^{k+1}), \qquad \forall ~k \in \mathbb{N}, 
\end{equation}
with implies that $f(x^{k+1}) < f(x^{k})$, for all $k = 0, 1, \ldots$, and then $\left(f(x^k)\right)_{k\in\mathbb{N}}$ is a decreasing sequence. Let ${\bar x}$ be a cluster point of $(x^k)_{k\in\mathbb{N}}$ and $(x^{k_j})_{j\in\mathbb{N}}$ a subsequence of $(x^k)_{k\in\mathbb{N}}$ such that $\lim_{j\to +\infty} x^{k_j} = \bar{x}$. Since $C$ is closed, by Proposition~\ref{pr:welldef}, we have $\bar{x} \in C$.  Since $\left(f(x^k)\right)_{k\in\mathbb{N}}$ is decreasing and $\lim_{j\to +\infty} f(x^{k_j}) =f(\bar{x})$, we conclude that $\lim_{k\to +\infty} f(x^{k}) =f(\bar{x})$.  On the order hand, using the last condition in \eqref{eq:TolArm}, we have $1/(1-2\gamma_3^k) < 1/(1-2 \bar{\gamma})$, for all $k = 0, 1, \ldots$. Since $w^k \in {\cal P}_C(\varphi_{\gamma_3^k}, x^{k}, z^k)$, where $z^k = x^{k}-\alpha_k \nabla f(x^{k})$, applying item~$(a)$ of Lemma~\ref{pr:cond} with $x = x^k$, $u = x^k$, $v = z^k$, $w = w^k$, $\gamma_1 = \gamma_2 = 0$, and $\gamma_3 = \gamma_3^k$, we obtain
\begin{equation*} \label{eq:wkltdArm}
	\|w^{k_j} - x^{k_j} \|^2 \leq \frac{\alpha_{k_j} ^2}{1-2\gamma_3^{k_j}} \|\nabla f(x^{k_j} )\|^2 < \frac{\alpha_{\max}^2}{1-2\bar{\gamma}} \|\nabla f(x^{k_j})\|^2, \qquad \forall ~j \in \mathbb{N}.
\end{equation*}
Considering that $(x^{k_j})_{j\in\mathbb{N}}$ converges to ${\bar x}$ and $\nabla f$ is continuous, the last inequality implies that $(w^{k_j})_{j\in\mathbb{N}}\subset C$ is also bounded. Thus, we can assume without loss of generality that $\lim_{j\to +\infty} w^{k_j} = \bar{w}\in C$. Now, due to $\tau_k \in (0,1]$, for all $k=0,1, \ldots$, we can also assume without loss of generality that $\lim_{j \to +\infty} \tau_{k_j} = \bar{\tau} \in [0,1].$ 
Therefore, owing to $\lim_{j\to +\infty} f(x^{k}) =f(\bar{x})$, taking limits in \eqref{eq:fmotArm1} along  an appropriate subsequence, we obtain
$
\bar{\tau} \big\langle \nabla f(\bar{x}), \bar{w}- \bar{x} \big\rangle=0. 
$
We have two possibilities: $\bar{\tau} > 0$ or $\bar{\tau} = 0$. If $\bar{\tau} > 0$, then  $\big\langle \nabla f(\bar{x}), \bar{w}- \bar{x} \big\rangle = 0.$  Now, we  assume that $\lim_{j \to +\infty} \tau_{k_j} = \bar{\tau} = 0$. In this case, for any fixed $q \in \mathbb{N}$, there exits $j$  such that $\tau_{k_j} < \tau^{q}$. Hence, Armijo's condition \eqref{eq:TkArm} does not hold for $ \tau^{q}$, i.e.,
$f\big(x^{k_j}+\tau^{q} (w^{k_j} - x^{k_j})\big) > f(x^{k_j}) + \sigma \tau^{q} \big\langle\nabla f(x^{k_j}), w^{k_j} - x^{k_j} \big\rangle$, for all $j \in \mathbb{N}.$
Thus, taking limits as $j$ goes to $+\infty$, we obtain 
$f\big(\bar{x}+\tau^{q} (\bar{w}- \bar{x})\big) \geq f(\bar{x}) + \sigma \tau^{q} \big\langle \nabla f(\bar{x}), \bar{w}-\bar{x} \big\rangle,$ which is equivalent to 
$$
\frac{f\big(\bar{x}+\tau^{q} (\bar{w} - \bar{x})\big) - f(\bar{x})}{\tau^{q}} \geq \sigma \big\langle \nabla f(\bar{x}), \bar{w}-\bar{x} \big\rangle.
$$
Since this inequality holds for all $q \in \mathbb{N}$, taking limits as $q$ goes to $+\infty$, we conclude that $\langle \nabla f(\bar{x}), \bar{w}-\bar{x} \rangle \geq \sigma \big\langle \nabla f(\bar{x}), \bar{w}-\bar{x} \big\rangle$. Hence, due to $\sigma \in (0, 1)$, we obtain $\big\langle \nabla f(\bar{x}), \bar{w}-\bar{x} \big\rangle \geq 0$.
We recall that $\langle \nabla f(x^{k_j}), w^{k_j}- x^{k_j} \rangle < 0$, for all $j=0, 1, \ldots$, which taking limits as $j$ goes to $+\infty$ yields $\big\langle \nabla f(\bar{x}), \bar{w}-\bar{x} \big\rangle \leq 0$. Hence, $\big\langle \nabla f(\bar{x}), \bar{w}-\bar{x} \big\rangle = 0$. Therefore, for any of two possibilities, $\bar{\tau} > 0$ or $\bar{\tau} = 0$, we have $\langle \nabla f(\bar{x}), \bar{w}-\bar{x} \rangle = 0$. On the other hand, 
$$w^{k_j} \in {\cal P}_C(\varphi_{\gamma_3^{k_j}}, x^{k_j}, z^{k_j} ),$$ 
where $z^{k_j} = x^{k_j}-\alpha_{k_j} \nabla f(x^{k_j})$, $0 \leq \gamma_3^{k_j} \leq  \bar{\gamma}$, and $\alpha_{k_j} > 0$. Thus, it  follows from \eqref{eq:ProjwA} that  
\begin{equation} \label{eq:kc}
	\big\langle z^{k_j}-w^{k_j}, y-w^{k_j} \big\rangle \leq \varphi_{\gamma_3^{k_j}}(x^{k_j},  w^{k_j}), \qquad y\in C, \quad \forall ~j \in \mathbb{N}.
\end{equation}
Moreover, since $\alpha_k \in [\alpha_{\min}, \alpha_{\max}]$, for all $k = 0, 1, \ldots$, we also assume without loss of generality that $\lim_{j \to +\infty} \alpha_{k_j} = \bar{\alpha} \in [\alpha_{\min}, \alpha_{\max}]$. Thus, taking limits in \eqref{eq:kc} and considering that the mapping $(\gamma_3, u, w) \mapsto \varphi_{\gamma_3}(u, w)$ is continuous, $\lim_{j\to +\infty} x^{k_j} = \bar{x}\in C$, $\lim_{j\to +\infty} w^{k_j} = \bar{w}\in C$, and $\lim_{j \to +\infty} \tau_{k_j} = \bar{\tau} \in [0,1]$, we conclude that  $\big\langle \bar{z}-\bar{w}, y-\bar{w} \big\rangle \leq \varphi_{\bar{\gamma}}(\bar{x}, \bar{w})$, for all $y\in C$,  where $\bar{z} = \bar{x}-{\bar \alpha} \nabla f(\bar{x})$. Hence, it follows from \eqref{eq:ProjwA} that $\bar{w}\in {\cal P}_C\left(\varphi_{\bar{\gamma}}, {\bar x}, {\bar z}\right)$, where $\bar{z} = \bar{x}-{\bar \alpha} \nabla f(\bar{x})$. Therefore, due to $\big\langle \nabla f(\bar{x}), \bar{w}-\bar{x} \big\rangle = 0$, we can apply the second sentence in item~$(iii)$ of Lemma~\ref{Le:ProjProperty} with $x = \bar{x}$, $z({\bar \alpha}) = \bar{z}$, and $w({\bar \alpha}) = \bar{w}$ to conclude that $\bar{x}$ is stationary for problem~\eqref{eq:OptP}.
\end{proof}
Due to Proposition~\ref{pr:statArm}, {\it from now on we assume that the sequence $(x^k)_{k\in\mathbb{N}}$ generated by Algorithm~\ref{Alg:Armijo} is infinite}. The following result establishes a basic inequality satisfied by the iterates of Algorithm~\ref{Alg:Armijo}, which will be used to study its convergence properties. For simplify the notations we define the following constant:
\begin{equation} \label{eq:eta}
	\xi := \dfrac{2 \alpha_{\max}}{\sigma} > 0.
\end{equation} 
\begin{lemma}\label{Le:xkArm}
	For each  $x\in C$, there holds
	\begin{equation}\label{eq:xkArm}
		\|x^{k+1}-x\|^2 \leq \|x^k-x\|^2 + 2\alpha_k\tau_k \big\langle\nabla f(x^k), x-x^k\big\rangle + \xi \left[f(x^k) - f(x^{k+1}) \right], \quad \forall ~k \in \mathbb{N}.
	\end{equation}
\end{lemma}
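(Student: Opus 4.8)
The plan is to prove the inequality \eqref{eq:xkArm} by expanding $\|x^{k+1}-x\|^2$ using the update rule \eqref{eq:IterArm}, namely $x^{k+1} = x^k + \tau_k(w^k - x^k)$, and then bounding the resulting cross term. Writing $x^{k+1} - x = (x^k - x) + \tau_k(w^k - x^k)$, I expand the square to obtain
\begin{equation*}
	\|x^{k+1}-x\|^2 = \|x^k-x\|^2 + 2\tau_k\big\langle w^k - x^k, x^k - x\big\rangle + \tau_k^2\|w^k-x^k\|^2.
\end{equation*}
The goal is to convert the middle inner product, which involves $w^k - x^k$, into an expression involving $\nabla f(x^k)$, so that the gradient term in \eqref{eq:xkArm} appears.

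To handle the cross term, I would invoke the inexact projection property. Since $w^k \in {\cal P}_C(\varphi_{\gamma_3^k}, x^k, z^k)$ with $z^k = x^k - \alpha_k\nabla f(x^k)$, the defining relation \eqref{eq:ProjwA} gives $\langle z^k - w^k, y - w^k\rangle \leq \varphi_{\gamma_3^k}(x^k, w^k) \leq \gamma_3^k\|w^k - x^k\|^2$ for all $y \in C$; in particular, taking $y = x \in C$ lets me bound $\langle z^k - w^k, x - w^k\rangle$. Rewriting $z^k - w^k = (x^k - w^k) - \alpha_k\nabla f(x^k)$ and manipulating, I can express $\big\langle w^k - x^k, x^k - x\big\rangle$ in terms of $\alpha_k\langle\nabla f(x^k), x - x^k\rangle$, the projection error $\gamma_3^k\|w^k-x^k\|^2$, and leftover quadratic terms in $\|w^k-x^k\|^2$. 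The key algebraic point is that the $\alpha_k\langle\nabla f(x^k), x-x^k\rangle$ term is exactly what multiplies $2\tau_k$ in the target inequality after the $\tau_k$ prefactor is reinstated.

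The remaining $\|w^k-x^k\|^2$ terms (from the quadratic expansion, the projection bound $\gamma_3^k \leq \bar\gamma < 1/2$, and reorganizing) must be controlled by the Armijo decrease. Here I would use \eqref{eq:fmotArm1}, which states $0 < -\sigma\tau_k\langle\nabla f(x^k), w^k-x^k\rangle \leq f(x^k) - f(x^{k+1})$, together with item~$(i)$ of Lemma~\ref{Le:ProjProperty}, namely $\langle\nabla f(x^k), w^k-x^k\rangle \leq \frac{\gamma_3^k - 1}{\alpha_k}\|w^k-x^k\|^2$. Combining these yields $\tau_k\|w^k-x^k\|^2 \leq \frac{\alpha_k}{\sigma(1-\gamma_3^k)}[f(x^k)-f(x^{k+1})]$, and since $\alpha_k \leq \alpha_{\max}$, $\tau_k \leq 1$, and $1 - \gamma_3^k \geq 1/2$, this bounds the quadratic residue by $\xi[f(x^k)-f(x^{k+1})]$ with $\xi = 2\alpha_{\max}/\sigma$ as in \eqref{eq:eta}.

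\emph{The main obstacle} I anticipate is the careful bookkeeping of the $\|w^k-x^k\|^2$ coefficients: the projection inexactness contributes a $+\gamma_3^k\|w^k-x^k\|^2$ of the \emph{wrong} sign, and I must ensure that after accounting for the $\tau_k^2\|w^k-x^k\|^2$ expansion term and the factor $2\tau_k$, everything that is not the gradient term can be absorbed into the Armijo descent via the estimate $\tau_k\|w^k-x^k\|^2 \leq \xi[f(x^k)-f(x^{k+1})]$. Provided the constant $\xi$ in \eqref{eq:eta} is chosen generously enough to dominate these residual quadratic contributions uniformly in $k$, the inequality \eqref{eq:xkArm} follows; tracking the exact interplay between $\tau_k$, $\tau_k^2$, and $\gamma_3^k$ is the delicate part of the computation.
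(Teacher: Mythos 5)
There is a genuine gap in your middle step: the claimed decomposition of the cross term is wrong. Starting from \eqref{eq:ProjwA} with $y=x$, writing $z^k-w^k=(x^k-w^k)-\alpha_k\nabla f(x^k)$ and $x-w^k=(x-x^k)+(x^k-w^k)$, what one actually obtains is
\begin{equation*}
	\big\langle w^k-x^k, x^k-x\big\rangle \leq \alpha_k\big\langle\nabla f(x^k), x-x^k\big\rangle + \alpha_k\big\langle\nabla f(x^k), x^k-w^k\big\rangle - (1-\gamma_3^k)\|w^k-x^k\|^2,
\end{equation*}
i.e., besides the target gradient term and the quadratic terms there is the extra term $\alpha_k\big\langle\nabla f(x^k), x^k-w^k\big\rangle$, which is \emph{not} quadratic in $\|w^k-x^k\|$ and which your accounting omits. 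This is precisely the term that requires the Armijo decrease: by \eqref{eq:TkArm} (equivalently \eqref{eq:fmotArm1}), $2\tau_k\alpha_k\big\langle\nabla f(x^k), x^k-w^k\big\rangle \leq \frac{2\alpha_k}{\sigma}\left[f(x^k)-f(x^{k+1})\right] \leq \xi\left[f(x^k)-f(x^{k+1})\right]$, and this is exactly where the $\xi$ term in \eqref{eq:xkArm} comes from in the paper's proof. Your plan to handle the leftovers through item~$(i)$ of Lemma~\ref{Le:ProjProperty} cannot close this gap, because item~$(i)$ bounds $\big\langle\nabla f(x^k), x^k-w^k\big\rangle$ from \emph{below} by $\frac{1-\gamma_3^k}{\alpha_k}\|w^k-x^k\|^2$, whereas this term enters the upper bound for $\|x^{k+1}-x\|^2$ with a positive coefficient, so an \emph{upper} bound is what is needed.

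Conversely, the quadratic terms on which your proposal spends its entire budget are harmless: combining the expansion term $\tau_k^2\|w^k-x^k\|^2$ with the $-2\tau_k(1-\gamma_3^k)\|w^k-x^k\|^2$ coming from the projection inequality, their total coefficient is $-\tau_k\big[2(1-\gamma_3^k)-\tau_k\big] \leq -\tau_k(1-2\bar{\gamma}) < 0$, using $\gamma_3^k\leq\bar{\gamma}<1/2$ and $\tau_k\in(0,1]$; the paper simply discards them, and no absorption by $\xi\left[f(x^k)-f(x^{k+1})\right]$ is needed. Indeed, if you did assign the whole $\xi$ budget to the quadratic residue as proposed, nothing would be left to pay for the genuine leftover gradient term, so the final inequality would not follow. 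Your auxiliary estimate $\tau_k\|w^k-x^k\|^2 \leq \frac{\alpha_k}{\sigma(1-\gamma_3^k)}\left[f(x^k)-f(x^{k+1})\right]$ is a correct inequality, but it is aimed at the wrong term. The repair is short and uses only tools you already cite: drop the nonpositive quadratic aggregate, split $\big\langle\nabla f(x^k), x-w^k\big\rangle = \big\langle\nabla f(x^k), x-x^k\big\rangle + \big\langle\nabla f(x^k), x^k-w^k\big\rangle$, and apply the Armijo condition \eqref{eq:TkArm} to the second piece together with $\alpha_k\leq\alpha_{\max}$ and the monotonicity from Proposition~\ref{pr:statArm}; this is the paper's argument.
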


\begin{proof}
	We know that $\|x^{k+1}-x\|^2 = \|x^k-x\|^2 + \|x^{k+1}-x^k\|^2 - 2 \big\langle x^{k+1}-x^k, x-x^k \big\rangle$, for all $x \in C$ and $k = 0, 1, \ldots$. Thus, using \eqref{eq:IterArm},  we have	
	\begin{equation}\label{eq:xkArm1}
		\|x^{k+1}-x\|^2 = \|x^k-x\|^2 + \tau_k^2\|w^k - x^{k}\|^2 - 2 \tau_k \big\langle w^k - x^{k}, x-x^k \big\rangle, \qquad \forall ~k \in \mathbb{N}. 
	\end{equation}
	On the other hand, by using \eqref{eq:PInexArm} we have  $w^k \in {\cal P}_C(\varphi_{\gamma_3^k}, x^{k}, z^k)$ with $z^k = x^{k}-\alpha_k \nabla f(x^{k})$. Thus, applying item~$(b)$ of Lemma~\ref{pr:cond} with $y = x$, $u = x^k$, $v = z^k$, $w = w^k$, $\gamma_1 = \gamma_2 = 0$, $\gamma_3 = \gamma_3^k$, and $\varphi_{\gamma_3} = \varphi_{\gamma_3^k}$, we obtain $\langle x^k-\alpha_k\nabla f(x^k)-w^k, x-w^k\rangle \leq \gamma_3^k \|w^k - x^{k}\|^2$, for all $k \in \mathbb{N}$.  After some algebraic manipulations in the last inequality, we have 
	$$
	\big\langle w^k-x^k, x-x^k\big\rangle \geq \alpha_k \big\langle\nabla f(x^k), w^k-x \big\rangle + (1-\gamma_3^k) \|w^k-x^k\|^2.
	$$
	Combining the last inequality with \eqref{eq:xkArm1}, we conclude 
	\begin{equation} \label{eq:xkArm3}
		\|x^{k+1}-x\|^2 \leq \|x^k-x\|^2 - \tau_k \big[2(1-\gamma_3^k) - \tau_k \big] \|w^k-x^k\|^2 + 2\tau_k\alpha_k \big\langle\nabla f(x^k), x-w^k\big\rangle.
	\end{equation}
	Since $0 \leq \gamma_3^k < \bar{\gamma} < 1/2$ and $\tau_k \in (0, 1]$, we have $2(1-\gamma_3^k) - \tau_k \geq 1 - 2 \bar{\gamma} > 0$. Thus, \eqref{eq:xkArm3} becomes
	$$
	\|x^{k+1}-x\|^2 \leq \|x^k-x\|^2 + 2\tau_k\alpha_k \big\langle\nabla f(x^k), x-w^k\big\rangle, \qquad \forall ~k \in \mathbb{N}. 
	$$
	Therefore, considering that $\big\langle \nabla f(x^k), x-w^k\big\rangle = \big\langle \nabla f(x^k), x-x^k\big\rangle + \big\langle \nabla f(x^k), x^k-w^k \big\rangle$ and taking into account \eqref{eq:TkArm}, we conclude that
	\begin{equation} \label{eq;ali}
		\|x^{k+1}-x\|^2 \leq \|x^k-x\|^2 + 2\tau_k\alpha_k \big\langle\nabla f(x^k),x-x^k\big\rangle + \frac{2 \alpha_k}{\sigma} \left[f(x^k)-f(x^{k+1})\right], \qquad \forall ~k \in \mathbb{N}. 
	\end{equation}
	Since $0< \alpha_k\leq  \alpha_{\max}$, Proposition~\ref{pr:statArm} implies $\alpha_k\big[f(x^k)-f(x^{k+1})\big] < \alpha_{\max} \big[f(x^k)-f(x^{k+1})\big]$. Therefore, the desired inequality \eqref{eq:xkArm} follows from \eqref{eq;ali} by using \eqref{eq:eta}. 
\end{proof}

For the sequence $(x^k)_{k\in\mathbb{N}}$ generated by Algorithm~\ref{Alg:Armijo}, we define the following auxiliary set:
\begin{equation*}\label{eq:SetTArm}
	U := \left\{x \in C: f(x) \leq \inf_{k}f(x^{k}), \quad {k\in \mathbb{N}}\right\}.
\end{equation*}
Next, we analyze the behavior of the sequence $(x^k)_{k\in\mathbb{N}}$ when $f$ is a quasiconvex function.
\begin{corollary} \label{cor:xkquasifeArm}
	Assume that $f$ is a quasiconvex function. If $U \neq \varnothing$, then $(x^k)_{k\in\mathbb{N}}$ converges to a stationary point of problem~\eqref{eq:OptP}.
\end{corollary}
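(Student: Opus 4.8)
The plan is to transcribe the argument of Corollary~\ref{cor:xkquasife} to the Armijo setting, replacing the basic estimate of Lemma~\ref{Le:FejerConv} by its counterpart Lemma~\ref{Le:xkArm} and replacing Theorem~\ref{teo.Main} by Proposition~\ref{pr:statArm}. First I would fix an arbitrary $x \in U$. Since $f$ is quasiconvex and, by the very definition of $U$, $f(x) \leq \inf_{k} f(x^k) \leq f(x^k)$ for every $k$, the quasiconvexity criterion applied with $y = x^k$ gives $\langle \nabla f(x^k), x - x^k \rangle \leq 0$ for all $k \in \mathbb{N}$. Substituting this into Lemma~\ref{Le:xkArm} and discarding the nonpositive term $2\alpha_k \tau_k \langle \nabla f(x^k), x - x^k\rangle$ yields
$$
\|x^{k+1}-x\|^2 \leq \|x^k-x\|^2 + \xi \big[f(x^k) - f(x^{k+1})\big], \qquad \forall~ k \in \mathbb{N}.
$$

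Next I would check that this inequality makes $(x^k)_{k\in\mathbb{N}}$ quasi-Fej\'er convergent to $U$ in the sense of Definition~\ref{def:QuasiFejer}, with $\epsilon_k := \xi[f(x^k)-f(x^{k+1})]$. By Proposition~\ref{pr:statArm} the sequence $(f(x^k))_{k\in\mathbb{N}}$ is decreasing, so each $\epsilon_k \geq 0$; moreover the sum telescopes, and because $x \in U$ forces $f(x^{k+1}) \geq \inf_{k} f(x^k) \geq f(x)$, one gets $\sum_{k=0}^{N} \epsilon_k = \xi[f(x^0) - f(x^{N+1})] \leq \xi[f(x^0) - f(x)] < +\infty$. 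Hence all hypotheses of Definition~\ref{def:QuasiFejer} hold, and Theorem~\ref{teo.qf} guarantees that $(x^k)_{k\in\mathbb{N}}$ is bounded and therefore admits a cluster point.

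Having boundedness, I would select a cluster point $\bar{x}$ and a subsequence $x^{k_j} \to \bar{x}$; since $C$ is closed, $\bar{x} \in C$. Continuity of $f$ together with the monotonicity (hence convergence) of $(f(x^k))_{k\in\mathbb{N}}$ gives $f(\bar{x}) = \lim_{j} f(x^{k_j}) = \inf_{k} f(x^k)$, so that $\bar{x} \in U$. The second part of Theorem~\ref{teo.qf} then upgrades the existence of a cluster point lying in $U$ to full convergence $\lim_{k} x^k = \bar{x}$. Finally, Proposition~\ref{pr:statArm} asserts that every cluster point of $(x^k)_{k\in\mathbb{N}}$ is stationary for problem~\eqref{eq:OptP}, so the limit $\bar{x}$ is stationary, which closes the argument.

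The individual steps are routine once the right tools are assembled; the one point that requires care is the finiteness of $\sum_{k} \epsilon_k$, which I expect to be the main obstacle because it relies simultaneously on the monotonicity of $(f(x^k))_{k\in\mathbb{N}}$ supplied by Proposition~\ref{pr:statArm} and on the defining inequality of $U$ to bound $f(x^{k+1})$ below by $f(x)$. Establishing this bound is exactly what converts the telescoping difference into a summable error sequence and thus unlocks the quasi-Fej\'er machinery.
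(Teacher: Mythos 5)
Your proposal is correct and follows essentially the same route as the paper's own proof: both fix $x \in U$, use quasiconvexity to drop the inner-product term in Lemma~\ref{Le:xkArm}, establish quasi-Fej\'er convergence to $U$ with $\epsilon_k = \xi\big[f(x^k)-f(x^{k+1})\big]$ (nonnegative and telescoping by Proposition~\ref{pr:statArm} and the definition of $U$), invoke Theorem~\ref{teo.qf} for boundedness and then full convergence once a cluster point is shown to lie in $U$, and conclude stationarity of the limit from Proposition~\ref{pr:statArm}. No gaps; the step you flagged as delicate is handled exactly as in the paper.
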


\begin{proof}
	 Let $x \in U$. Thus, $f(x) \leq f(x^k)$ for all $k\in \mathbb{N}$. Since  $f$ is quasiconvex, we have $\big\langle \nabla f(x^k),x-x^k\big\rangle \leq 0$, for all $k\in \mathbb{N}$. Using Lemma \ref{Le:xkArm}, we obtain
	 $$\|x^{k+1}-x\|^2 \leq \|x^k-x\|^2+ \xi \left[f(x^k) - f(x^{k+1}) \right], \quad \forall~k \in \mathbb{N}.$$
	Defining $\epsilon_k = \xi \big[f(x^k) - f(x^{k+1}) \big]$, we have $\|x^{k+1}-x\|^2 \leq \|x^k-x\|^2+ \epsilon_k$, for all $k \in \mathbb{N}$. On the other hand, summing $\epsilon_k$ with $k = 0, 1, \ldots, N$, we have $\sum_{k=0}^N \epsilon_k \leq \xi \big[f(x^0) - f(x) \big] < \infty$. Thus, it follows from  Definition~\ref{def:QuasiFejer}  that $(x^k)_{k\in\mathbb{N}}$ is quasi-Fejér convergent to $U$. Since  $U$ is nonempty, it follows from Theorem \ref{teo.qf} that $(x^k)_{k\in\mathbb{N}}$ is bounded, and therefore it has a cluster point. Let $\bar{x}$ be a cluster point of $(x^k)_{k\in\mathbb{N}}$ and $(x^{k_j})_{j\in\mathbb{N}}$ be a subsequence of $(x^k)_{k\in\mathbb{N}}$ such that $\lim_{j \to \infty} x^{k_j} = \bar{x}$. Considering that $f$ is continuous, we have $\lim_{j \to \infty} f(x^{k_j}) = f(\bar{x})$.  Hence,  since, by Proposition~\ref{pr:statArm}, $\left(f(x^k)\right)_{k\in\mathbb{N}}$ is decreasing,   we obtain $\inf\{f(x^k) : k = 0, 1, 2, \ldots \} = \lim_{k \to \infty} f(x^k) = f(\bar{x}).$
	Therefore, $\bar{x} \in U$. It follows from Theorem \ref{teo.qf} that $(x^k)_{k\in\mathbb{N}}$ converges to $\bar{x}$ and the conclusion is obtained by  using  again Proposition  \ref{pr:statArm}.
\end{proof}
The next two results are similar  to  Lemma~\ref{le:quasi} and  Theorem~\ref{teo:pseudo}, respectively. For completeness reasons, we have included  their proofs here.
\begin{lemma} \label{le:lfc}
	 If $f$ is a quasiconvex function and $(x^k)_{k\in\mathbb{N}}$ has no cluster points, then $\Omega^* = \varnothing$, $\lim_{k \to \infty} \|x^k\|= \infty$, and $\lim_{k \to \infty} f(x^k) = \inf \{f(x) : x \in C\}$.
\end{lemma}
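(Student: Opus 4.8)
The plan is to mirror the argument of Lemma~\ref{le:quasi}, simply replacing the constant-step-size ingredients by their Armijo counterparts: Corollary~\ref{cor:xkquasifeArm} in place of Corollary~\ref{cor:xkquasife}, the set $U$ in place of $T$, and the monotonicity of $\left(f(x^k)\right)_{k\in\mathbb{N}}$ furnished by Proposition~\ref{pr:statArm}. The first step is immediate: since $(x^k)_{k\in\mathbb{N}}$ is a sequence in $\mathbb{R}^n$ with no cluster point, it cannot be bounded (a bounded sequence in $\mathbb{R}^n$ has a convergent subsequence), and hence $\lim_{k\to\infty}\|x^k\|=\infty$.

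Next I would establish $\Omega^*=\varnothing$ by contradiction. Suppose problem~\eqref{eq:OptP} admits a solution $\tilde{x}\in\Omega^*$. Then $f(\tilde{x})=f^*\leq f(x^k)$ for every $k$, so $f(\tilde{x})\leq\inf_k f(x^k)$ and therefore $\tilde{x}\in U$, giving $U\neq\varnothing$. Because $f$ is quasiconvex, Corollary~\ref{cor:xkquasifeArm} then forces $(x^k)_{k\in\mathbb{N}}$ to converge, contradicting $\lim_{k\to\infty}\|x^k\|=\infty$; hence $\Omega^*=\varnothing$. For the functional-value assertion I would first invoke Proposition~\ref{pr:statArm} to record that $\left(f(x^k)\right)_{k\in\mathbb{N}}$ is decreasing, so its limit exists in $[-\infty,+\infty)$ and coincides with $\inf_k f(x^k)$. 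If that limit is $-\infty$ it trivially equals $\inf\{f(x):x\in C\}$. Otherwise, assuming for contradiction that $\lim_{k\to\infty}f(x^k)>\inf\{f(x):x\in C\}$, the definition of the infimum yields $\tilde{x}\in C$ with $f(\tilde{x})<\lim_{k\to\infty}f(x^k)=\inf_k f(x^k)$, whence $\tilde{x}\in U$; Corollary~\ref{cor:xkquasifeArm} again produces a convergent subsequence and contradicts $\lim_{k\to\infty}\|x^k\|=\infty$.

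No genuine obstacle arises here once the Armijo analogues are available, so this is essentially a transcription of Lemma~\ref{le:quasi}. The only point requiring care, and the one I would flag as the main subtlety, is to secure the decreasing property of $\left(f(x^k)\right)_{k\in\mathbb{N}}$ from Proposition~\ref{pr:statArm} before the final step, since it is precisely the identity $\inf_k f(x^k)=\lim_{k\to\infty}f(x^k)$ that allows the candidate point $\tilde{x}$ to be placed into $U$ and the quasi-Fej\'er machinery of Corollary~\ref{cor:xkquasifeArm} to be triggered.
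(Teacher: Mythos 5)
Your proposal is correct and follows essentially the same route as the paper: norm divergence from the absence of cluster points, then two contradiction arguments placing a candidate point $\tilde{x}$ into $U$ and invoking Corollary~\ref{cor:xkquasifeArm} to force convergence of $(x^k)_{k\in\mathbb{N}}$. Your only addition is to make explicit, via Proposition~\ref{pr:statArm}, that monotonicity guarantees $\lim_{k\to\infty}f(x^k)$ exists and equals $\inf_k f(x^k)$ — a point the paper uses implicitly — so this is a faithful (indeed slightly more careful) transcription of the paper's argument.
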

\begin{proof}
	Since $(x^k)_{k\in\mathbb{N}}$ has no cluster points, then  $\lim_{k \to \infty} \|x^k\|= \infty$. Assume that problem \eqref{eq:OptP} has an optimum, say $\tilde{x}$, so $f(\tilde{x}) \leq f(x^k)$ for all $k$. Thus, $\tilde{x} \in U$. Using Corollary \ref{cor:xkquasifeArm}, we obtain that $(x^k)_{k\in\mathbb{N}}$ is convergent, contradicting that $\lim_{k \to \infty} \|x^k\|= \infty$. Therefore, $\Omega^* = \varnothing$. Now, we claim that $\lim_{k \to \infty} f(x^k) = \inf \{f(x) : x \in C\}$. If $\lim_{k \to \infty} f(x^k) = -\infty$, the claim holds. Let $f^* = \inf_{x \in C} f(x)$. By contradiction, suppose that $\lim_{k \to \infty} f(x^k) > f^*$. Then, there exists $\tilde{x} \in C$ such that $f(\tilde{x}) \leq f(x^k)$ for all $k$. Using Corollary \ref{cor:xkquasifeArm}, we have that  $(x^k)_{k\in\mathbb{N}}$ is convergent, contradicting again $\lim_{k \to \infty} \|x^k\|= \infty$, which concludes the proof.
\end{proof}

\begin{theorem}
Assume that $f$ is a pseudoconvex function. Then,  $\Omega^* \neq \varnothing$ if and only if $(x^k)_{k\in\mathbb{N}}$ has at least one cluster point. Moreover, $(x^k)_{k\in\mathbb{N}}$  converges to an optimum point if $\Omega^* \neq \varnothing$;  otherwise, $\lim_{k \to \infty} \|x^k\|= \infty$ and $\lim_{k \to \infty} f(x^k) = \inf \{f(x) : x \in C\}$.
\end{theorem}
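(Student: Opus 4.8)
The plan is to mirror the proof of Theorem~\ref{teo:pseudo}, replacing each ingredient from the constant step size analysis by its Armijo counterpart: Corollary~\ref{cor:xkquasifeArm} in place of Corollary~\ref{cor:xkquasife}, the set $U$ in place of $T$, the monotonicity supplied by Proposition~\ref{pr:statArm} in place of Corollary~\ref{cor:fmonot}, and Lemma~\ref{le:lfc} in place of Lemma~\ref{le:quasi}. The first observation is that a pseudoconvex $f$ is quasiconvex, so the quasiconvex machinery above is available throughout.

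For the forward implication, I would assume $\Omega^* \neq \varnothing$ and fix $x^* \in \Omega^*$. Since $f(x^*) \leq f(x)$ for every $x \in C$, in particular $f(x^*) \leq f(x^k)$ for all $k$, so $x^* \in U$ and hence $U \neq \varnothing$. Corollary~\ref{cor:xkquasifeArm} then gives convergence of $(x^k)_{k\in\mathbb{N}}$ to a stationary point, which is in particular a cluster point; this settles the ``only if'' direction, and together with the pseudoconvexity argument below it also yields the claim that the limit is an optimum.

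For the reverse implication, suppose $(x^k)_{k\in\mathbb{N}}$ has a cluster point $\bar{x}$, say $x^{k_j} \to \bar{x}$. By Proposition~\ref{pr:statArm} the sequence $(f(x^k))_{k\in\mathbb{N}}$ is decreasing, hence convergent, and by continuity of $f$ we get $f(x^{k_j}) \to f(\bar{x})$, so that $\inf_{k} f(x^k) = \lim_{k\to\infty} f(x^k) = f(\bar{x})$ and therefore $\bar{x} \in U$. Corollary~\ref{cor:xkquasifeArm} again produces convergence of $(x^k)_{k\in\mathbb{N}}$ to a stationary point $\tilde{x}$ (and since a convergent sequence has a unique cluster point, $\bar{x} = \tilde{x}$). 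This is where pseudoconvexity does the decisive work: the stationarity condition \eqref{eq:StatPoint} gives $\langle \nabla f(\tilde{x}), x - \tilde{x}\rangle \geq 0$ for all $x \in C$, and pseudoconvexity upgrades this to $f(x) \geq f(\tilde{x})$ for all $x \in C$, i.e. $\tilde{x} \in \Omega^*$, whence $\Omega^* \neq \varnothing$. I expect this to be the main obstacle only in the sense that it is the single place where the full strength of pseudoconvexity, rather than mere quasiconvexity, is invoked; it is precisely what forces the stationary limit to be a genuine global minimizer.

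Finally, the ``moreover'' part splits into two cases. When $\Omega^* \neq \varnothing$, the argument above already yields convergence to a stationary point that is globally optimal, giving the asserted convergence to an optimum. When $\Omega^* = \varnothing$, the sequence can admit no cluster point, since otherwise the reverse implication would force $\Omega^* \neq \varnothing$; thus Lemma~\ref{le:lfc} applies directly and delivers $\lim_{k\to\infty} \|x^k\| = \infty$ and $\lim_{k\to\infty} f(x^k) = \inf\{f(x) : x \in C\}$, completing the proof.
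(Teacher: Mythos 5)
Your proposal is correct and follows essentially the same route as the paper's own proof: pseudoconvexity implies quasiconvexity, $\Omega^* \neq \varnothing$ gives $U \neq \varnothing$ and hence convergence via Corollary~\ref{cor:xkquasifeArm}; conversely a cluster point lies in $U$ by the monotonicity from Proposition~\ref{pr:statArm} and continuity of $f$, Corollary~\ref{cor:xkquasifeArm} yields convergence to a stationary point which pseudoconvexity upgrades to an optimum, and the final assertions follow from Lemma~\ref{le:lfc}. The only difference is that you spell out the stationarity-to-optimality step via \eqref{eq:StatPoint} explicitly, which the paper leaves implicit in this Armijo version.
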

\begin{proof}
Recall that  pseudoconvex functions are also  quasiconvex.  Assume that $\Omega^* \neq \varnothing$. In this case, $U \neq \varnothing$.  Using Corollary~\ref{cor:xkquasifeArm}, we conclude that $(x^k)_{k\in\mathbb{N}}$ converges to a stationary point of problem~\eqref{eq:OptP}.  
Reciprocally, let $\bar{x}$ be a cluster point of $(x^k)_{k\in\mathbb{N}}$ and  $(x^{k_j})_{j\in\mathbb{N}}$ be a subsequence of $(x^k)_{k\in\mathbb{N}}$ such that $\lim_{j\to +\infty} x^{k_j} = \bar{x}$. Since, from Proposition~\ref{pr:statArm},  $(f(x^{k}))_{k\in\mathbb{N}}$ is monotone non-increasing, by continuity of $f$, we conclude that $ \inf_{k}f(x^{k})= f(\bar{x})$ and hence $\bar{x} \in U$. Using  Corollary~\ref{cor:xkquasifeArm}, we obtain that $(x^k)_{k\in\mathbb{N}}$ converges to a stationary point $\tilde{x}$ of  problem \eqref{eq:OptP}.  Since $f$ is   pseudoconvex, this point  is also an optimal solution of problem~\eqref{eq:OptP}. The last part of the theorem follows by  combining   the first one with  Lemma~\ref{le:lfc}.
\end{proof}
\subsection{Iteration-complexity bound}\label{SubSec:IterCompArm}
This section is devoted to study iteration-complexity bounds for the sequence generated by Algorithm~\ref{Alg:Armijo},  similar results in the multiobjetive context can be found in \cite{fliege_vicente2019}. For that we {\it assume that $f^*>-\infty$ and the objective function $f$ has  Lipschitz continuous gradient with constant $L \geq 0$, i.e., we assume that $\nabla f$ satisfies \eqref{eq:LipCond}}. Moreover, we also assume that $(x^k)_{k\in\mathbb{N}}$ generated by Algorithm~\ref{Alg:INP} converges to a point $x^*$, i.e, $\lim_{k \to \infty} x^k = x^*$. To simplify the notations, we set 
\begin{equation} \label{eq;taumin}
 \tau_{\min} := \min \left\{\frac{2\tau(1-\sigma)(1- \bar{\gamma})}{\alpha_{\max} L},~1\right\}.
 \end{equation} 
 The next lemma is a version of \cite[Lemma 3.1]{fliege_vicente2019} for our specific context. 
\begin{lemma}\label{Le:tauminArm}
	The step size $\tau_k$ in Algorithm \ref{Alg:Armijo} satisfies $\tau_k \geq \tau_{\min}$.
\end{lemma}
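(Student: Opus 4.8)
The plan is to argue from the definition of $j_k$ as a \emph{minimal} index and to split into the cases $j_k=0$ and $j_k\ge 1$. If $j_k=0$, then $\tau_k=\tau^0=1$, and since $\tau_{\min}\le 1$ by its very definition in \eqref{eq;taumin}, the bound $\tau_k\ge\tau_{\min}$ holds trivially. Thus the entire content lies in the case $j_k\ge 1$, where minimality of $j_k$ forces the exponent $j_k-1$ to \emph{fail} the Armijo test \eqref{eq:TkArm}. Writing $t:=\tau^{j_k-1}$ and $d^k:=w^k-x^k$, this failure reads $f(x^k+t\,d^k)>f(x^k)+\sigma t\langle\nabla f(x^k),d^k\rangle$.

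Next I would bound the left-hand side from above via the descent lemma. Applying Lemma~\ref{Le:derivlipsch} with $x=x^k$ and $y=x^k+t\,d^k$ gives $f(x^k+t\,d^k)-f(x^k)-t\langle\nabla f(x^k),d^k\rangle\le \tfrac{L}{2}t^2\|d^k\|^2$. Chaining this with the failed Armijo inequality and cancelling $f(x^k)$ yields $\sigma t\langle\nabla f(x^k),d^k\rangle< t\langle\nabla f(x^k),d^k\rangle+\tfrac{L}{2}t^2\|d^k\|^2$; dividing by $t>0$ and rearranging produces $(1-\sigma)\bigl(-\langle\nabla f(x^k),d^k\rangle\bigr)<\tfrac{L}{2}t\|d^k\|^2$. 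Here $-\langle\nabla f(x^k),d^k\rangle>0$ because, the sequence being assumed infinite, $x^k$ is nonstationary, so item~$(iii)$ of Lemma~\ref{Le:ProjProperty} gives $\langle\nabla f(x^k),w^k-x^k\rangle<0$ and in particular $d^k\neq0$.

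To close, I would feed in the geometric estimate from item~$(i)$ of Lemma~\ref{Le:ProjProperty}, namely $\langle\nabla f(x^k),d^k\rangle\le \tfrac{\gamma_3^k-1}{\alpha_k}\|d^k\|^2$, equivalently $-\langle\nabla f(x^k),d^k\rangle\ge \tfrac{1-\gamma_3^k}{\alpha_k}\|d^k\|^2$. Substituting into the previous display and dividing by $\|d^k\|^2>0$ gives $(1-\sigma)\tfrac{1-\gamma_3^k}{\alpha_k}<\tfrac{L}{2}t$. Recalling $t=\tau^{j_k-1}=\tau_k/\tau$, this is exactly $\tau_k>\tfrac{2\tau(1-\sigma)(1-\gamma_3^k)}{\alpha_k L}$; using $\gamma_3^k\le\bar{\gamma}$ and $\alpha_k\le\alpha_{\max}$ from \eqref{eq:TolArm} to replace $1-\gamma_3^k$ by $1-\bar{\gamma}$ and $\alpha_k$ by $\alpha_{\max}$ in the denominator, and combining with the trivial case $\tau_k=1\ge\tau_{\min}$, delivers $\tau_k\ge\tau_{\min}$ with $\tau_{\min}$ as in \eqref{eq;taumin}.

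The individual steps are routine; the one point demanding care is the sign bookkeeping around the descent direction, ensuring that $-\langle\nabla f(x^k),d^k\rangle$ is \emph{strictly} positive so that the division by $\|d^k\|^2$ is legitimate and no inequality flips. This is precisely where the nonstationarity of $x^k$ and item~$(iii)$ of Lemma~\ref{Le:ProjProperty} enter, while the rest is a clean combination of the descent lemma with the projection estimate~$(i)$.
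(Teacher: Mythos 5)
Your proposal is correct and follows essentially the same route as the paper's proof: handle the trivial case $\tau_k=1$, exploit the failure of the Armijo test at $\tau_k/\tau$, bound via Lemma~\ref{Le:derivlipsch}, insert the estimate from item~$(i)$ of Lemma~\ref{Le:ProjProperty}, and finish with the bounds $\gamma_3^k\le\bar\gamma$, $\alpha_k\le\alpha_{\max}$ from \eqref{eq:TolArm}. Your extra attention to the strict positivity of $-\langle\nabla f(x^k),w^k-x^k\rangle$ is a harmless refinement of bookkeeping the paper leaves implicit.
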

\begin{proof}
If $\tau_k=1$, then the result trivially holds. Thus,  assume that $\tau_k<1$. It follows from Armijo's condition in \eqref{eq:TkArm}   that $f(x^{k}+ \frac{\tau_k}{\tau } (w^{k} - x^{k})) > f(x^{k}) + \sigma \frac{\tau_k}{\tau }  \big\langle\nabla f(x^{k}), w^{k} - x^{k} \big\rangle$. Now, using  Lemma \ref{Le:derivlipsch}, we have $f\left(x^{k}+ \frac{\tau_k}{\tau } (w^{k} - x^{k})\right) \leq f(x^{k}) + \frac{\tau_k}{\tau } \big\langle\nabla f(x^{k}), w^{k} - x^{k} \big\rangle + \frac{1}{\tau^2 } \frac{L}{2}\tau_k^2 \|w^k-x^k\|^2$. Hence, combining the two previous inequalities,  we obtain
	\begin{equation}\label{eq:ffA3}
		\tau(1-\sigma) \big\langle\nabla f(x^{k}), w^{k} - x^{k} \big\rangle + \frac{L}{2}\tau_k \|w^k-x^k\|^2 > 0.
	\end{equation}
	On the order hand, since $w^k \in {\cal P}_C(\varphi_{\gamma_3^k}, x^{k}, z^k)$, where $z^k = x^{k}-\alpha_k \nabla f(x^{k})$, applying item~$(i)$ of Lemma~\ref{Le:ProjProperty} with $x=x^k$, $w(\alpha) = w^k$, $z = z^k$, $\gamma_1 = \gamma_2 = 0$, $\gamma_3 = \gamma_3^k$, and $\varphi_{\gamma} = \varphi_{\gamma^k}$, we have
	$$
	\big\langle\nabla f(x^{k}), w^k-x^{k}\big\rangle \leq \left(\frac{\gamma_3^k-1}{\alpha_k}\right) \|w^k-x^k\|^2.
	$$
	Combining  the last inequality with \eqref{eq:ffA3} yields  $ \left[{\tau(1-\sigma)(\gamma_3^k-1)}/{\alpha_k} + {L}\tau_k/2 \right]\|w^k-x^k\|^2 > 0. $ Hence, using \eqref{eq:TolArm}, it follows that  
	$$
	\tau_k> \frac{2\tau(1-\sigma)(1-\gamma_3^k)}{\alpha_k L} \geq \frac{2\tau(1-\sigma)(1-\bar{\gamma})}{\alpha_{\max} L}.
	$$
	Therefore, since $\tau_k$ is never larger than one, the result follows and the proof is concluded.
\end{proof}
It follows from item $(ii)$ of Lemma~\ref{Le:ProjProperty} that if $x^k \in {\cal P}_C(\varphi_{\gamma_3}, x^k, z^k)$, then the point $x^k$ is stationary for problem \eqref{eq:OptP}. Since $w^k \in {\cal P}_C(\varphi_{\gamma_3}, x^k, z^k)$, the quantity $\|w^k-x^k\|$ can be seen as a measure of stationarity of $x^k$. Next theorem presents an iteration-complexity bound for this quantity, see a similar result in \cite[Theorem 3.1]{fliege_vicente2019}.
\begin{theorem}
	Let $\tau_{\min}$ be defined in \eqref{eq;taumin}. Then, for every $N \in \mathbb{N}$, the following inequality holds
	$$
	\min\left\{\|w^k-x^k\| :~ k= 0, 1 \ldots, N-1\right\} \leq \sqrt{\frac{\alpha_{\max} \left[f(x^0)-f^*\right] }{\sigma \tau_{\min}{\left(1-\bar{\gamma}\right)}}} \frac{1}{\sqrt{N}}.
	$$
\end{theorem}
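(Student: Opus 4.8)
The plan is to convert the guaranteed per-iteration functional decrease into a lower bound proportional to $\|w^k-x^k\|^2$, telescope that bound over $k$, and then extract the minimum via the standard averaging argument. First I would recall the Armijo decrease inequality \eqref{eq:fmotArm1} established in the proof of Proposition~\ref{pr:statArm}, namely $-\sigma\tau_k\langle\nabla f(x^k),w^k-x^k\rangle \leq f(x^k)-f(x^{k+1})$ for all $k\in\mathbb{N}$. The goal is to replace the inner product appearing on the left by a quantity controlled from below by a fixed positive multiple of $\|w^k-x^k\|^2$.

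The key step is to apply item~$(i)$ of Lemma~\ref{Le:ProjProperty} with $x=x^k$, $w(\alpha)=w^k$, $\alpha=\alpha_k$, and $\gamma_3=\gamma_3^k$, which gives $\langle\nabla f(x^k),w^k-x^k\rangle \leq \left(\frac{\gamma_3^k-1}{\alpha_k}\right)\|w^k-x^k\|^2$. Since $0\leq\gamma_3^k\leq\bar{\gamma}<1/2$ and $\alpha_k\leq\alpha_{\max}$ by \eqref{eq:TolArm}, the coefficient satisfies $\frac{1-\gamma_3^k}{\alpha_k}\geq\frac{1-\bar{\gamma}}{\alpha_{\max}}>0$, so that $-\langle\nabla f(x^k),w^k-x^k\rangle \geq \frac{1-\bar{\gamma}}{\alpha_{\max}}\|w^k-x^k\|^2$. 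Combining this with \eqref{eq:fmotArm1} and the uniform lower bound $\tau_k\geq\tau_{\min}$ furnished by Lemma~\ref{Le:tauminArm}, I would arrive at the per-iteration estimate $f(x^k)-f(x^{k+1}) \geq \frac{\sigma\tau_{\min}(1-\bar{\gamma})}{\alpha_{\max}}\|w^k-x^k\|^2$, valid for every $k\in\mathbb{N}$.

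With this uniform descent in hand, summing over $k=0,1,\ldots,N-1$ telescopes the left-hand side to $f(x^0)-f(x^N)$, which is bounded above by $f(x^0)-f^*$ because $f^*\leq f(x^N)$ (here the standing assumption $f^*>-\infty$ is used). This produces $\sum_{k=0}^{N-1}\|w^k-x^k\|^2 \leq \frac{\alpha_{\max}\,[f(x^0)-f^*]}{\sigma\tau_{\min}(1-\bar{\gamma})}$. Finally, bounding the sum below by $N$ times its smallest term, dividing by $N$, and taking square roots delivers the claimed $\mathcal{O}(1/\sqrt{N})$ bound exactly as stated.

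I expect no serious obstacle here, as the argument is the standard telescoping-plus-minimum technique already used in Theorem~\ref{Teo:FCompP}. The only point requiring genuine care is the sign bookkeeping when passing from item~$(i)$ of Lemma~\ref{Le:ProjProperty} (where $\gamma_3^k-1<0$) to the lower bound on the descent, together with using $\bar{\gamma}$ and $\alpha_{\max}$ in the correct direction so that the emergent coefficient $\frac{\sigma\tau_{\min}(1-\bar{\gamma})}{\alpha_{\max}}$ is indeed a uniform positive lower bound across all iterations. Note that the convergence hypothesis $\lim_{k\to\infty}x^k=x^*$ stated for this section is not actually needed for this particular estimate; only $f^*>-\infty$ and the monotone decrease of $(f(x^k))_{k\in\mathbb{N}}$ are invoked.
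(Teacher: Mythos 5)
Your proposal is correct and follows essentially the same route as the paper's own proof: the Armijo decrease inequality combined with item~$(i)$ of Lemma~\ref{Le:ProjProperty}, the uniform bound $(1-\gamma_3^k)/\alpha_k \geq (1-\bar{\gamma})/\alpha_{\max}$ from \eqref{eq:TolArm}, the step size lower bound from Lemma~\ref{Le:tauminArm}, and then telescoping plus the standard minimum-extraction argument. Your closing observation that only $f^* > -\infty$ (and not the convergence of $(x^k)_{k\in\mathbb{N}}$) is needed for this estimate is accurate and consistent with the paper's proof, which likewise never invokes that hypothesis.
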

\begin{proof}
	From the definition of $\tau_k$ and condition \eqref{eq:TkArm}, we have
	\begin{equation}\label{eq:fD1}
		f(x^{k+1}) - f(x^k) \leq \sigma\tau_k \big\langle\nabla f(x^{k}),  w^k-x^{k} \big\rangle.
	\end{equation}
	Since  $w^k \in {\cal P}_C(\varphi_{\gamma_3^k}, x^{k}, z^k)$, where $z^k = x^{k}-\alpha_k \nabla f(x^{k})$, applying item~$(i)$ of Lemma~\ref{Le:ProjProperty} with $x = x^k$, $w(\alpha) = w^k$, $z = z^k$, $\gamma_1 = \gamma_2 = 0$, $\gamma_3 = \gamma_3^k$, and $\varphi_{\gamma} = \varphi_{\gamma^k}$, we obtain
	\begin{equation}\label{eq:fD2}
		\big\langle\nabla f(x^{k}), w^k-x^{k} \big\rangle \leq \left(\frac{\gamma_3^k-1}{\alpha_k}\right) \|w^k-x^k\|^2.
	\end{equation}
	By \eqref{eq:TolArm}, we have $(1-\gamma_3^k)/\alpha_k \geq (1-\bar{\gamma})/\alpha_{\max}$. Thus, combining \eqref{eq:fD1} with \eqref{eq:fD2} and taking into account Lemma \ref{Le:tauminArm}, it follows that
	$$f(x^k) - f(x^{k+1}) \geq \sigma\tau_k \left(\frac{1-\bar{\gamma}}{\alpha_{\max}}\right) \|w^k-x^k\|^2 \geq \sigma\tau_{\min}\left(\frac{1-\bar{\gamma}}{\alpha_{\max}}\right) \|w^k-x^k\|^2.$$
	Hence, performing the sum of the above inequality for $k= 0, 1,\ldots, N-1$, we have
	$$
	\sum_{k= 0}^{N-1} \|w^k - x^k\|^2 \leq \frac{\alpha_{\max}\left[f(x^0) - f(x^N)\right]}{\sigma\tau_{\min}(1- \bar{\gamma})}\leq \frac{\alpha_{\max}\left[f(x^0) - f^*\right]}{\sigma\tau_{\min}(1- \bar{\gamma})}, 
	$$
	which implies the desired inequality.
\end{proof}
\subsubsection{Iteration-complexity bound under convexity} \label{SubSec:IntCompA}
In this section, we present an iteration-complexity bound for the sequence $\left(f(x^k)\right)_{k\in\mathbb{N}}$ when  $f$ is convex.
\begin{theorem}
	Let $f$ be a convex function on $C$. Then, for every $N \in \mathbb{N}$, there holds
	$$
	\min \left\{f(x^k) - f^* :~k = 0, 1 \ldots, N-1\right\} \leq \frac{\|x^0 - x^*\| + \xi\left[f(x^0)-f^*\right]}{2 \alpha_{\min} \tau_{\min}}\frac{1}{N}.
	$$
\end{theorem}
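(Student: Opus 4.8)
The plan is to combine the Fej\'er-type inequality from Lemma~\ref{Le:xkArm} with convexity, exactly as in the proof of the earlier convex complexity bound (Theorem in Section~\ref{SubSec:IntCompC}), but now carrying the Armijo step sizes $\alpha_k$ and $\tau_k$ explicitly. First I would apply Lemma~\ref{Le:xkArm} with $x=x^*$. Convexity of $f$ gives $\langle\nabla f(x^k), x^*-x^k\rangle \leq f(x^*)-f(x^k) = f^*-f(x^k)$, so
\begin{equation*}
	\|x^{k+1}-x^*\|^2 \leq \|x^k-x^*\|^2 + 2\alpha_k\tau_k\big[f^*-f(x^k)\big] + \xi\big[f(x^k)-f(x^{k+1})\big].
\end{equation*}
Rearranging, $2\alpha_k\tau_k\big[f(x^k)-f^*\big] \leq \|x^k-x^*\|^2 - \|x^{k+1}-x^*\|^2 + \xi\big[f(x^k)-f(x^{k+1})\big]$.

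Next I would use the uniform lower bounds $\alpha_k\geq\alpha_{\min}$ (from \eqref{eq:TolArm}) and $\tau_k\geq\tau_{\min}$ (from Lemma~\ref{Le:tauminArm}) to replace the variable coefficient $2\alpha_k\tau_k$ on the left by the constant $2\alpha_{\min}\tau_{\min}$. This requires the factor $f(x^k)-f^*$ to be nonnegative, which holds since $f^*$ is the optimal value and each $x^k\in C$; hence $2\alpha_{\min}\tau_{\min}\big[f(x^k)-f^*\big] \leq 2\alpha_k\tau_k\big[f(x^k)-f^*\big]$. Summing the resulting inequality over $k=0,1,\ldots,N-1$ telescopes both the $\|\cdot\|^2$ difference and the $f(x^k)-f(x^{k+1})$ difference, yielding
\begin{equation*}
	2\alpha_{\min}\tau_{\min}\sum_{k=0}^{N-1}\big[f(x^k)-f^*\big] \leq \|x^0-x^*\|^2 - \|x^N-x^*\|^2 + \xi\big[f(x^0)-f(x^N)\big].
\end{equation*}

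Then I would discard the nonpositive terms $-\|x^N-x^*\|^2$ and $-\xi f(x^N)$ (the latter bounded above by $-\xi f^*$, so $\xi[f(x^0)-f(x^N)]\leq\xi[f(x^0)-f^*]$) to bound the right-hand side by $\|x^0-x^*\|^2 + \xi\big[f(x^0)-f^*\big]$. Finally, since the minimum of the $N$ nonnegative terms $f(x^k)-f^*$ is no larger than their average, I obtain $N\cdot\min_{0\leq k\leq N-1}\big[f(x^k)-f^*\big] \leq \sum_{k=0}^{N-1}\big[f(x^k)-f^*\big]$, and dividing through by $2\alpha_{\min}\tau_{\min}N$ gives the stated bound.

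I do not anticipate a serious obstacle here, as the architecture mirrors the constant-step-size convex bound; the only points demanding care are the direction of the inequality when lower-bounding $2\alpha_k\tau_k$ (valid precisely because $f(x^k)-f^*\geq 0$, so one must note this explicitly before substituting) and the telescoping of the $\xi$-term, which relies on monotonicity of $\big(f(x^k)\big)_{k\in\mathbb{N}}$ established in Proposition~\ref{pr:statArm} to ensure $f(x^N)\geq f^*$ makes the discarded term harmless. A minor cosmetic discrepancy is that the statement writes $\|x^0-x^*\|$ rather than $\|x^0-x^*\|^2$ in the numerator, evidently a typo inherited from the analogous earlier theorem; the argument naturally produces the squared norm.
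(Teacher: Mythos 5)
Your proposal is correct and follows essentially the same route as the paper's own proof: apply Lemma~\ref{Le:xkArm} with $x=x^*$, use convexity to bound $\langle\nabla f(x^k),x^*-x^k\rangle$, replace $2\alpha_k\tau_k$ by $2\alpha_{\min}\tau_{\min}$ via \eqref{eq:TolArm} and Lemma~\ref{Le:tauminArm}, telescope the sum over $k=0,\ldots,N-1$, and conclude by bounding the minimum by the average. Your explicit remarks on the nonnegativity of $f(x^k)-f^*$ (needed when lowering the coefficient) and on the $\|x^0-x^*\|$ versus $\|x^0-x^*\|^2$ typo are both accurate; the paper leaves these points implicit.
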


\begin{proof}
	Using the first inequality in \eqref{eq:TolArm} and Lemma~\ref{Le:tauminArm}, we have $2 \alpha_{\min} \tau_{\min} \leq 2 \alpha_k \tau_k$, for all $k\in {\mathbb N}$. From the convexity of $f$, we have $\big\langle \nabla f(x^k), x^*-x^k \big\rangle \leq f^* - f(x^k)$, for all $k\in {\mathbb N}$. Thus, applying Lemma \ref{Le:xkArm} with $x=x^*$, after some algebraic manipulations, we conclude 
	$$
	2 \alpha_{\min}\tau_{\min} \left[f(x^k)-f^*\right] \leq \|x^k-x^*\|^2-\|x^{k+1}-x^*\|^2 + \xi \left[f(x^k) - f(x^{k+1}) \right] \quad k = 0, 1, \ldots.
	$$
	Hence, performing the sum of the above inequality for $k = 0,1,\ldots, N-1$, we obtain 
	$$
	2 \alpha_{\min}\tau_{\min}\sum_{k=0}^{N-1} \left[f(x^k)-f^*\right] \leq \|x^0-x^*\|^2-\|x^N-x^*\|^2 + \xi\left[f(x^0)-f(x^N)\right].
	$$
	Therefore, $2\alpha_{\min}\tau_{\min} N \min\{f(x^k) - f^* : k = 0, 1 \ldots, N-1\} \leq \|x^0 - x^*\| + \xi\left[f(x^0)-f(x^N)\right]$, which implies the desired inequality.
\end{proof}

\section{Numerical experiments} \label{Sec:NumExp}

In this section, we summing up the results of our preliminary numerical experiments in order to verify the practical behavior  of the proposed algorithms. 
In particular, we will illustrate  the potential advantages of considering inexact projections instead of exact ones  in a problem of least squares over the spectrahedron. 
The codes are written in Matlab and are freely available at \url{https://orizon.ime.ufg.br}. All experiments were run on a  macOS 10.15.7 with 3.7GHz Intel Core i5 processor and 8GB of RAM.

Let $\mathbb{S}^n$ be the space of $n\times n$ symmetric real matrices and $\mathbb{S}^n_+$ be the cone of positive semidefinite matrices in $\mathbb{S}^n$. Given $A$ and $B$ two $n\times m$ matrices, with $m\geq n$, we consider the following problem:
\begin{equation} \label{lsspec}
 \begin{array}{cl}
\ds\min_{X\in \mathbb{S}^n}   &  f(X):=\ds\frac{1}{2}\|AX-B\|^2_F         \\
\mbox{s.t.} & \tr(X)=1, \\
                   & X \in \mathbb{S}^n_+, \\
\end{array}
\end{equation}
where $X$ is the $n\times n$ matrix that we seek to find. Here, $\|\cdot\|_F$ denotes the Frobenius matrix norm $\|A\|_F=\sqrt{\langle A,A \rangle}$, where the inner product is given by $\langle A,B \rangle = \tr(A^TB)$.
Problem \eqref{lsspec} and its variants appear in applications in different areas such as statistics, physics and  economics \cite{ESCALANTE199873,doi:10.1137/0902021,douglasprojected, WOODGATE1996171}, and were considered, for example, in the numerical tests of \cite{BirgMartRay2000,gonccalves2020inexact,Guanghui2013,LanZhou2016}.

Following we briefly discuss how to compute projections onto the feasible region of \eqref{lsspec}.
Define $C=\{X\in\R^{n\times n}\mid \tr(X)=1, \; X \in \mathbb{S}^n_+\}$.
Since $\mathbb{S}^n_+$ is a convex  and  closed set, $C$ is convex and compact. Formally the problem of projection a given vector $V\in\R^{n\times n}$ onto $C$ is stated as 
\begin{equation}\label{prob:pj}
 \begin{array}{cl}
\ds\min_{W\in \mathbb{S}^n}  &  \ds\frac{1}{2} \|W-V\|_F^2         \\
\mbox{s.t.} & W\in C. \\
\end{array}
\end{equation}
Since $\|W-V\|_F^2=\|W-V_S\|_F^2+\|V_A\|^2_F$ for any $W\in\mathbb{S}^n$, where $V_S$ and $V_A$ denote the symmetric and the antisymmetric part of $V$, respectively, it can be assumed, without loss of generality, that $V\in\mathbb{S}^n$. 
Let $W^*$ be the unique solution of \eqref{prob:pj}.
Given the eigen-decomposition $V = QDQ^T$, it is well known that $W^* = QP_{\Delta_n}(D)Q^T$, where $P_{\Delta_n}(D)$ denotes the diagonal matrix obtained by projecting the diagonal elements of $D$ onto the $n$-dimensional simplex $\Delta_n=\{x\in\R^n\mid x_1+\ldots+x_n=1; \; x\geq 0\}$, see, for example, \cite{douglasprojected}.
This means that  computing $W^*$ requires {\it a priori} the full eigen-decomposition of $V$, which can be computationally prohibitive for  high-dimensional problems. 
This drawback will appear clearly in the results reported in section \ref{sec:comparasion}. 
Inexact projections can be obtained by adding the constraint $\rank(W)\leq p$, for a given $1\leq p < n$, to Problem \eqref{prob:pj}.
Denoting by $W_p$ the solution of this latter problem, we have $W_p=\sum_{i=1}^p \lambda_iq_iq_i^T$, where the scalars $(\lambda_1,\ldots,\lambda_p)\in\R^p$ are obtained by projecting the $p$ largest eigenvalues of $V$ onto the $p$-dimensional simplex $\Delta_p$ and $q_i\in\R^n$ are the corresponding unit eigenvectors for all $i=1,\ldots,p$, see \cite{allen2017linear}. 
Therefore, inexact projections can be computed by means of an incomplete eigen-decomposition of $V$, resulting in computational savings.
Note that if $\rank(W^*)\leq p$, then $W_p$  coincides with $W^*$. As far as we know, this approach was first proposed in \cite{allen2017linear} and was also used in \cite{gonccalves2020inexact,gonccalves2019levenberg}.

We implemented the inexact projection scheme discussed above by choosing parameter $p$ in an adaptive way and introducing a suitable error criterion.
Let us formally describe the adopted scheme to find an inexact solution  $\tilde{W}$ to Problem~\eqref{prob:pj} relative to $U \in C$ with  error tolerance mapping $\varphi_{\gamma}$ and real numbers $\gamma_1$, $\gamma_2$ and $\gamma_3$ satisfying the suitable conditions \eqref{eq:Tolerance} or \eqref{eq:TolArm}, depending on the main algorithm. 

\begin{algorithm}[H]
	\caption{Procedure to compute $\tilde{W}\in  {\cal P}_C\left(\varphi_{\gamma}, U, V \right)$}
	\label{Alg:CondG}
	\begin{algorithmic}		
		\Input{Let $1\leq p <n$ be given.}
		\State \textbf{Step 1:} Compute $(\lambda_i^V,q_i)_{i=1}^p$ (with $\|q_i\|=1$, $i=1,\ldots,p$) the $p$ largest eigenpairs of $V$, then set 
		$$W_p:=\sum_{i=1}^p \lambda_iq_iq_i^T,$$
		where $(\lambda_1,\ldots,\lambda_p)\in\R^p$ are obtained by projecting $(\lambda_1^V,\ldots,\lambda_p^V)\in\R^p$ onto the $p$-dimensional simplex $\Delta_p$.
		\State \textbf{Step 2:} Compute $Y_{p} := \ds\arg\min_{Y \in  C} \, \langle W_p  - V, Y - W_p  \rangle$.
		\State \textbf{Step 3:} If $\langle W_p  - V, Y_{p}  - W_p  \rangle \geq -\varphi_{\gamma}(U,V,W_p )$, then set $\tilde{W}:=W_p$ and {\bf return} to the main algorithm.
		\State \textbf{Step 4:} Set $p\leftarrow p+1$ and go to {\bf Step~1}.
		\Output{$\tilde{W}:=W_p$.}
	\end{algorithmic}
\end{algorithm}

Some comments regarding Algorithm~\ref{Alg:CondG} are in order. First, in Step~1 we compute the rank-$p$ projection of $V$ onto $C$.  The computational cost os this step is dominated by the cost of computing the $p$ leading eigenpairs of $V$, since the projection of $(\lambda_1^V,\ldots,\lambda_p^V)\in\R^p$ onto the $p$-dimensional simplex $\Delta_p$ can be easily done in  $O(p \log p)$ time, see, for example, \cite{allen2017linear}.
Second, the subproblem in Step~2 is solved by computing the largest eigenpair of  $V-W_p$. Indeed, $Y_{p}=qq^T$, where $q\in\R^n$ is the unit eigenvector corresponding to the largest eigenvalue of $V-W_p$, see, also, \cite{allen2017linear}. In our implementations, we used the Matlab function {\it eigs} to compute eigenvalues/eigenvectors \cite{eigs1,eigs2}.
Third, in Step~3 if the stopping criterion $\langle W_p  - V, Y_{p}  - W_p  \rangle \geq -\varphi_{\gamma}(U,V,W_p )$ is satisfied, then from Definition \ref{def:InexactProj}, we conclude that $\tilde{W}=W_p\in  {\cal P}_C\left(\varphi_{\gamma}, U, V \right)$, i.e., the output is a feasible inexact projection of $V\in\mathbb{S}^n$ relative to $U \in C$. Otherwise, we increase parameter $p$ and proceed to calculate a more accurate eigen-decomposition of $V$.
Fourth, in the first iteration of the main algorithm, we set $p = 1$ as the input for Algorithm~\ref{Alg:CondG}. For the subsequent iterations, we used, in principle, the {\it success value} for $p$ from the previous outer iteration. 
Without attempting to go into details, seeking computational savings, in some iterations, we consider decreasing the input $p$ with respect to the previous successful one.

Concerning the stopping criterion of the main algorithms, all runs were stopped at an iterate $X^k$ declaring convergence if
$$\frac{\|X^{\ell}-X^{\ell-1}\|_F}{\|X^{\ell-1}\|_F}\leq 10^{-4},$$ 
for $\ell=k$ and   $\ell=k-1$. This means that we stopped the execution of the main algorithms when the above convergence metric is satisfied for two consecutive iterations.

We used a similar strategy as in \cite{gonccalves2020inexact} to generate the test instances of \eqref{lsspec}. Given the dimensions $n$ and $m$, with $m\geq n$, we randomly generate $A$ (a sparse matrix with density $10^{-4}$) with elements between $(-1,1)$. Also, for a give parameter $\omega>1$, we define $\bar{X}:=\sum_{i=1}^{\omega} g_ig_i^T$, where $g_i\in\R^n$ is a random vector with only two nonnull components with the following structure $g_i=(\cdots \cos(\theta) \cdots  \sin(\theta)  \cdots)^T\in\R^n$, and then set $B=A\bar{X}$. Since $\bar{X}\notin C$, this procedure generally results in nonzero residue problems.


\subsection{Influence of the forcing parameter $\gamma$}

We start the numerical experiments by checking the influence of the forcing parameter  $\gamma^k = (\gamma_1^k, \gamma_2^k, \gamma_3^k)$ in  Algorithm~\ref{Alg:INP}.
We implemented Algorithm~\ref{Alg:INP} with: $(a_k)_{k\in\mathbb{N}}$ and $(b_k)_{k\in\mathbb{N}}$ given as in Remark~\ref{re:csab}(ii) with $\bar{b}=100$, and using
\begin{equation}\label{phi}
\varphi_{\gamma^k}(U,V,W) = \gamma_1^k \|V-W\|_F^2 + \gamma_2^k \|W-V\|_F^2 + \gamma_3^k \|W-U\|_F^2, \quad \forall k=1,2,\ldots,
\end{equation}
as the error tolerance function, see Definition~\ref{def:InexactProj}. We also set $\bar{\gamma_2} = 0.49995$.
Concerning parameter $\bar{\gamma}$, we considered different values for it, as we will explain below.
Given a particular $\bar{\gamma}<1/2$, we set
\begin{equation}\label{alphachoice}
\alpha =  0.9999 \cdot \frac{1-2 \bar{\gamma}}{L},
\end{equation}
where the Lipschitz constant $L$, with respect to problem \eqref{lsspec}, is given by $L=\|A^TA\|_F$.
The choice \eqref{alphachoice} for the fixed step size  $\alpha$ trivially satisfies \eqref{eq:alpha}.
Since  parameter $\gamma_3^k$ and the step size $\alpha$ are closely related (see \eqref{alphachoice} and recall that $0 \leq \gamma_3^k \leq \bar{\gamma} < 1/2$), we first investigate the behavior of Algorithm~\ref{Alg:INP} by varying only the strategy for $\gamma_3^k$. We set
\begin{equation}\label{gamma12}
\gamma_2^k=\min\left( \frac{1}{2}\frac{a_k}{\|\nabla f(X^{k})\|_F^2} , \bar{\gamma_2}\right), \quad \gamma_1^k=\frac{a_k}{\|\nabla f(X^{k})\|_F^2} -\gamma_2^k,  \quad \mbox{and} \quad \gamma_3^k = \bar{\gamma}, \quad \forall k=1,2,\ldots,
\end{equation}
and considered some different values for $\bar{\gamma}$. Note that the forcing parameter $\gamma^k$ given by \eqref{gamma12} satisfies  \eqref{eq:Tolerance}.
We used an instance of problem \eqref{lsspec} with $n=2000$, $m=4000$, and $\omega = 10$. The results for the starting point $X^0= (1/n)I$ and different choices for $\bar{\gamma}$ are in Table~\ref{tab:gamma3}. In the table, ``$f(X^*)$''  is the function value at the final iterate, ``it'' is the number of outer iterations, ``time(s)'' is the run time in seconds, and ``$\alpha$'' is the corresponding fixed step size given by \eqref{alphachoice}.

\begin{table}[H]
\centering
{\footnotesize
\begin{tabular}{|c|cccc|} \hline
\cellcolor[gray]{.85}  $\gamma_3^k= \bar{\gamma}$&\cellcolor[gray]{.85} $f(X^*)$ & \cellcolor[gray]{.85} it &\cellcolor[gray]{.85} time(s)&\cellcolor[gray]{.85} $\alpha$\\ \hline
 $ 0.0$ & 0.4899 & 107 & 28.9  & 0.0698\\
 $ 0.1$ & 0.4899 & 129 & 36.6 & 0.0558\\
 $ 0.2$ & 0.4899 & 162 & 43.5  & 0.0419 \\
 $ 0.3$ & 0.4899 & 223 & 59.6  & 0.0279 \\
 $ 0.4$ & 0.4899 & 375 & 101.0 & 0.0140 \\ \hline
\end{tabular}
}
\caption{Influence of parameter $\bar\gamma_3$ in the performance of Algorithm~\ref{Alg:INP} with the forcing parameter $\gamma^k$ given as in \eqref{gamma12} for an instance of problem \eqref{lsspec} with $n=2000$, $m=4000$, and $\omega = 10$.}
\label{tab:gamma3}
\end{table}

As can be seen in Table~\ref{tab:gamma3}, Algorithm~\ref{Alg:INP} performs better for lower values of $\bar\gamma_3$.
This is undoubtedly due to the fact that the fixed step size $\alpha$ is inversely proportional to $\bar\gamma_3$, see \eqref{alphachoice} and the last column of the table.
This result suggests that for the gradient method with constant step size, the best choice is to take $\gamma_3^k = 0$ for all $k$, leaving the inexactness of the projections to be controlled only by the terms of $\gamma_1^k$ and $\gamma_2^k$ in \eqref{phi}.
From an algorithmic point of view, the term corresponding to $\gamma_3^k$ in \eqref{phi} involves the last two generated iterates  and is often used in inexactness measures.
Therefore, for projection algorithms that use a constant step size, at least under restrictions as in \eqref{eq:alpha}, the theory developed here presents practical alternatives for the formulation of such measures.

Taking $\bar{\gamma}=0$, we consider different combinations of $\gamma_1^k$ and $\gamma_2^k$ such that $\gamma_1^k+\gamma_2^k=a_k/\|\nabla f(X^{k})\|_F^2$.
Our experiments showed that Algorithm~\ref{Alg:INP} presented no significant performance difference with these combinations.
Therefore, in the experiments reported in section~\ref{sec:comparasion}, we set for Algorithm~\ref{Alg:INP} the forcing parameter $\gamma^k$ as in \eqref{gamma12} with $\bar{\gamma}=0$.

\subsection{Comparison with exact projection approaches} \label{sec:comparasion}

In the present section, we compare the performance of Algorithms~\ref{Alg:INP} and \ref{Alg:Armijo} with their exact counterparts. Algorithm~\ref{Alg:Armijo} was implemented using the error tolerance function \eqref{phi} with 
$$\gamma^k =(0,0,0.49995), \quad \forall k=1,2,\ldots,$$
 and
\begin{equation}\label{spg}
\alpha_k:=\left\{\begin{array}{ll}
\ds\min\left(\alpha_{\max},\max \left(\alpha_{\min},\langle S^k,S^k\rangle/\langle S^k,Y^k\rangle\right)\right), & \mbox{if} \; \langle S^k,Y^k\rangle > 0\\
\alpha_{\max}, & \mbox{otherwise},
\end{array}\right.
\end{equation}
where $S^k:=X^k - X^{k-1}$, $Y^k:=\nabla f(X^k) - \nabla f(X^{k-1})$, $\alpha_{\min}=10^{-10}$, and $\alpha_{\max}=10^{10}$.
We observe that \eqref{spg} corresponds to the spectral choice for $\alpha_k$, see \cite{BirgMartRay2000,spgsiam}.
In the exact versions, the projections are calculated exactly, that is, involving full eigen-decompositions.

We considered some instances of problem \eqref{lsspec} with different parameters $n$, $m$ and $\omega$ and using three starting points given by $X^0(\beta)=(1-\beta)(1/n)I+\beta e_1e_1^T$, where $e_1\in\R^n$ is the first canonical vector and $\beta\in\{0.00,0.50,0.99\}$. The results in Table~\ref{tab:results} shows that, in relation to the CPU time, the inexact algorithms were notably more efficient (mainly in the larger instances) than the corresponding exact versions. In general, moderate values for the rank parameter $p$ in Algorithm~\ref{Alg:CondG} (typically, less than 10) were sufficient to compute the inexact projections, allowing significant computational savings with respect to the exact approaches.
Finally, we observe that Algorithm~\ref{Alg:Armijo} was much more efficient  than Algorithm~\ref{Alg:INP} on the chosen set of test problems.
This was already expected, due to the simplicity of the objective function of \eqref{lsspec}. Note that Algorithm~\ref{Alg:INP} does not require evaluations of the objective function (only gradient evaluations). Therefore, we hope that Algorithm~\ref{Alg:INP} can be competitive in problems where the objective function is expensive to be computationally evaluated.

\begin{table}[H]
\centering 
{\tiny
\begin{tabular}{|cc|c|c|ccc|ccc|ccc|ccc|}\hhline{*4{~}|*6{-}*6{-}|} 
  \multicolumn{4}{c|}{} & \multicolumn{6}{c|}{\cellcolor[gray]{.85} Algorithm~\ref{Alg:INP}} & \multicolumn{6}{c|}{\cellcolor[gray]{.85}  Algorithm~\ref{Alg:Armijo}}\\  \hhline{*4{~}|*6{-}*6{-}|} 
  \multicolumn{4}{c|}{} & \multicolumn{3}{c|}{\cellcolor[gray]{.85} Inexact} & \multicolumn{3}{c|}{\cellcolor[gray]{.85}  Exact} & \multicolumn{3}{c|}{\cellcolor[gray]{.85}  Inexact } & \multicolumn{3}{c|}{\cellcolor[gray]{.85}  Exact} \\ \hline \rowcolor[gray]{.85}
  $n$ & $m$ & $\omega$ & $\beta$ & $f(X^*)$ & it & time(s) & $f(X^*)$ & it & time(s) & $f(X^*)$ & it & time(s) & $f(X^*)$ & it & time(s) \\ \hline
          &          &      & 0.00&   0.4899 &  107 & 30.0 & 0.4899 &  108 & 78.3 & 0.4899 &    8 & 3.6 & 0.4899 &    8 & 6.8 \\ 
          &          & 10 & 0.50 &   0.4899 &  108 & 36.7 &  0.4899 &  108 & 78.4 & 0.4899 &    9 & 4.0 & 0.4899 &    9 & 7.7 \\ 
          &          &      &  0.99 &   0.4899 &  110 & 29.2 &  0.4899 &  110 & 78.8 & 0.4899 &    9 & 4.0 & 0.4899 &    9 & 7.6 \\   \hhline{*2{~}|*8{-}*6{-}|} 
          &          &      & 0.00 &   0.7887 &  141 & 47.6 &  0.7887 &  141 & 101.7 & 0.7887 &    9 & 4.2 & 0.7887 &    9 & 7.5 \\ 
  &  & 20 & 0.50 &   0.7887 &  139 & 46.8 &  0.7887 &  139 & 100.4 & 0.7887 &    9 & 4.2 & 0.7887 &    9 & 7.6 \\ 
 \multirowcell{-6}{2000}          & \multirowcell{-6}{4000}           &      &  0.99 &   0.7887 &  136 & 36.1 &  0.7887 &  138 & 96.6 & 0.7887 &   10 & 4.8 & 0.7887 &   10 & 8.3 \\   \hline
          &          &      & 0.00&   0.2139 &  189 & 124.3 &  0.2139 &  189 & 543.1 & 0.2139 &    8 & 9.6 & 0.2139 &    8 & 27.0 \\ 
  &  & 10 & 0.50 &   0.2139 &  189 & 133.2 &  0.2139 &  189 & 541.5 & 0.2139 &    8 & 9.6 & 0.2139 &    9 & 30.0 \\ 
          &          &      &  0.99 &   0.2139 &  190 & 121.8 &  0.2139 &  190 & 537.1 & 0.2139 &   10 & 11.5 & 0.2139 &    9 & 30.2 \\  \hhline{*2{~}|*8{-}*6{-}|} 
          &          &      & 0.00 &   0.9837 &  173 & 113.1 &  0.9837 &  172 & 491.6 & 0.9837 &   11 & 13.0 & 0.9837 &   12 & 39.4 \\ 
  &  & 20 & 0.50 &   0.9837 &  170 & 109.9 &  0.9837 &  170 & 485.2 & 0.9837 &   11 & 12.9 & 0.9837 &   10 & 33.2 \\ 
 \multirowcell{-6}{3000}          & \multirowcell{-6}{6000}           &      &  0.99 &   0.9837 &  161 & 102.8 &  0.9837 &  164 & 466.3 & 0.9837 &   12 & 14.1 & 0.9837 &   10 & 33.3 \\   \hline
          &          &      & 0.00 &   1.0046 &  166 & 194.2 &  1.0046 &  165 & 1092.5 & 1.0046 &   10 & 22.8 & 1.0046 &   11 & 83.7 \\
  &  & 10 & 0.50 &   1.0046 &  165 & 191.6 &  1.0046 &  165 & 1088.8 & 1.0046 &   12 & 26.4 & 1.0046 &   11 & 83.7 \\ 
          &          &      &  0.99 &   1.0046 &  169 & 193.4 &  1.0046 &  169 & 1113.0 & 1.0046 &   11 & 25.0 & 1.0046 &   12 & 91.2 \\   \hhline{*2{~}|*8{-}*6{-}|} 
          &          &      & 0.00 &   3.0753 &   90 & 126.2 &  3.0753 &   89 & 585.9 & 3.0753 &    8 & 16.9 & 3.0753 &    8 & 63.6 \\ 
  &  & 20 & 0.50 &   3.0753 &   89 & 122.8 &  3.0753 &   89 & 584.9 & 3.0753 &    9 & 18.5 & 3.0753 &    8 & 62.0 \\ 
 \multirowcell{-6}{4000}          & \multirowcell{-6}{8000}           &      &  0.99 &   3.0753 &   87 & 99.4 &  3.0753 &   86 & 566.0 & 3.0753 &    9 & 18.8 & 3.0753 &    9 & 67.3 \\   \hline
          &          &        & 0.00 &  0.7182 &  243 & 599.6 &  0.7182 &  243 & 3212.4 & 0.7181 &   10 & 39.7 & 0.7181 &   10 & 150.8 \\  
  &  & 10 & 0.50 &  0.7182 &  244 & 594.4 &  0.7182 &  244 & 3206.3 & 0.7181 &    9 & 36.3 & 0.7181 &   10 & 151.4 \\ 
          &          &        &  0.99 &  0.7182 &  246 & 571.2 &  0.7182 &  246 & 3227.1 & 0.7181 &   10 & 38.2 & 0.7181 &   10 & 150.0 \\   \hhline{*2{~}|*8{-}*6{-}|} 
          &          &        & 0.00 &   2.7721 &  178 & 436.3 &  2.7721 &  178 & 2339.8 & 2.7721 &    8 & 29.8 & 2.7721 &    8 & 122.8 \\  
  &  & 20 & 0.50 &   2.7721 &  177 & 436.2 &  2.7721 &  177 & 2325.5 & 2.7721 &    8 & 30.5 & 2.7721 &    7 & 108.5 \\ 
 \multirowcell{-6}{5000}          & \multirowcell{-6}{10000}           &        &  0.99 &   2.7721 &  172 & 395.9 &  2.7721 &  172 & 2253.6 & 2.7721 &    8 & 29.6 & 2.7721 &    8 & 122.3 \\  \hline
\end{tabular}
}
\caption{Performance of the inexact and exact versions of Algorithms~\ref{Alg:INP} and \ref{Alg:Armijo}  in some instances of problem  \eqref{lsspec}.}
\label{tab:results}
\end{table}

\section{Conclusions} \label{Sec:Conclusions}
In this paper, we proposed a new inexact version of the classical gradient projection method (GPM) denoted by Gradient-InexP method (GInexPM) for solving constrained convex optimization problems. As a way to compute an inexact projection the GInexPM uses a relative error tolerance. Two different strategies for choosing the step size were employed in the analyses of the method.	The convergence analysis was carried out without any compactness assumption. In addition, we provided some iteration-complexity results related to GInexPM.
Numerical results were reported illustrating  potential advantages of considering inexact projections instead of exact ones. We expect that this paper will contribute to the development of research in this field of inexact projections, mainly to solve large-scale problems.


\begin{thebibliography}{10}

\bibitem{Ademir_Orizon_Leandro2020}
A.~A. Aguiar, O.~P. Ferreira, and L.~F. Prudente.
\newblock Subgradient method with feasible inexact projections for constrained
  convex optimization problems.
\newblock page arXiv:2006.08770, June 2020, 2006.08770.

\bibitem{allen2017linear}
Z.~Allen-Zhu, E.~Hazan, W.~Hu, and Y.~Li.
\newblock Linear convergence of a {F}rank-{W}olfe type algorithm over
  trace-norm balls.
\newblock In {\em Advances in Neural Information Processing Systems}, pages
  6191--6200, 2017.

\bibitem{Beck2014}
A.~Beck.
\newblock {\em Introduction to nonlinear optimization}, volume~19 of {\em
  MOS-SIAM Series on Optimization}.
\newblock Society for Industrial and Applied Mathematics (SIAM), Philadelphia,
  PA; Mathematical Optimization Society, Philadelphia, PA, 2014.
\newblock Theory, algorithms, and applications with MATLAB.

\bibitem{yunier_roman2010}
J.~Y. Bello~Cruz and L.~R. Lucambio~P\'{e}rez.
\newblock Convergence of a projected gradient method variant for quasiconvex
  objectives.
\newblock {\em Nonlinear Anal.}, 73(9):2917--2922, 2010.

\bibitem{Bertsekas1976}
D.~P. Bertsekas.
\newblock On the {G}oldstein-{L}evitin-{P}olyak gradient projection method.
\newblock {\em IEEE Trans. Automatic Control}, AC-21(2):174--184, 1976.

\bibitem{Bertsekas1999}
D.~P. Bertsekas.
\newblock {\em Nonlinear programming}.
\newblock Athena Scientific Optimization and Computation Series. Athena
  Scientific, Belmont, MA, second edition, 1999.

\bibitem{spgsiam}
E.~G. Birgin, J.~M. Martínez, and M.~Raydan.
\newblock Nonmonotone spectral projected gradient methods on convex sets.
\newblock {\em SIAM Journal on Optimization}, 10(4):1196--1211, 2000,
  https://doi.org/10.1137/S1052623497330963.

\bibitem{BirgMartRay2000}
E.~G. {Birgin}, J.~M. {Martínez}, and M.~{Raydan}.
\newblock Inexact spectral projected gradient methods on convex sets.
\newblock {\em IMA Journal of Numerical Analysis}, 23(4):539--559, 2003.

\bibitem{Bottou_Curtis_Nocedal2018}
L.~Bottou, F.~E. Curtis, and J.~Nocedal.
\newblock Optimization methods for large-scale machine learning.
\newblock {\em SIAM Rev.}, 60(2):223--311, 2018.

\bibitem{Boyd_Ghaoui_Ferron1994}
S.~Boyd, L.~El~Ghaoui, E.~Feron, and V.~Balakrishnan.
\newblock {\em Linear matrix inequalities in system and control theory},
  volume~15 of {\em SIAM Studies in Applied Mathematics}.
\newblock Society for Industrial and Applied Mathematics (SIAM), Philadelphia,
  PA, 1994.

\bibitem{bubeck2015}
S.~Bubeck.
\newblock Convex optimization: Algorithms and complexity.
\newblock {\em Foundations and Trends® in Machine Learning}, 8(3-4):231--357,
  2015.

\bibitem{burachik1995full}
R.~Burachik, L.~M.~G. Drummond, A.~N. Iusem, and B.~F. Svaiter.
\newblock Full convergence of the steepest descent method with inexact line
  searches.
\newblock {\em Optimization}, 32(2):137--146, 1995.

\bibitem{OrizonFabianaGilson2018}
F.~R. de~Oliveira, O.~P. Ferreira, and G.~N. Silva.
\newblock Newton's method with feasible inexact projections for solving
  constrained generalized equations.
\newblock {\em Comput. Optim. Appl.}, 72(1):159--177, 2019.

\bibitem{Reiner_Orizon_Leandro2019}
R.~{D{\'\i}az Mill{\'a}n}, O.~P. {Ferreira}, and L.~F. {Prudente}.
\newblock {Alternating conditional gradient method for convex feasibility
  problems}.
\newblock {\em arXiv e-prints}, page arXiv:1912.04247, Dec 2019, 1912.04247.

\bibitem{ESCALANTE199873}
R.~Escalante and M.~Raydan.
\newblock Dykstra's algorithm for constrained least-squares rectangular matrix
  problems.
\newblock {\em Computers \& Mathematics with Applications}, 35(6):73 -- 79,
  1998.

\bibitem{Fan_Wang_Yan2019}
J.~Fan, L.~Wang, and A.~Yan.
\newblock An inexact projected gradient method for sparsity-constrained
  quadratic measurements regression.
\newblock {\em Asia-Pac. J. Oper. Res.}, 36(2):1940008, 21, 2019.

\bibitem{Figueiredo2007}
M.~A.~T. {Figueiredo}, R.~D. {Nowak}, and S.~J. {Wright}.
\newblock Gradient projection for sparse reconstruction: Application to
  compressed sensing and other inverse problems.
\newblock {\em IEEE Journal of Selected Topics in Signal Processing},
  1(4):586--597, Dec 2007.

\bibitem{doi:10.1137/0902021}
R.~Fletcher.
\newblock A nonlinear programming problem in statistics (educational testing).
\newblock {\em SIAM Journal on Scientific and Statistical Computing},
  2(3):257--267, 1981, https://doi.org/10.1137/0902021.

\bibitem{fliege_vicente2019}
J.~Fliege, A.~I.~F. Vaz, and L.~N. Vicente.
\newblock Complexity of gradient descent for multiobjective optimization.
\newblock {\em Optim. Methods Softw.}, 34(5):949--959, 2019.

\bibitem{FukushimaTseng2002}
M.~Fukushima, Z.-Q. Luo, and P.~Tseng.
\newblock Smoothing functions for second-order-cone complementarity problems.
\newblock {\em SIAM J. Optim.}, 12(2):436--460, 2001/02.

\bibitem{Golbabaee_Davies2018}
M.~{Golbabaee} and M.~E. {Davies}.
\newblock Inexact gradient projection and fast data driven compressed sensing.
\newblock {\em IEEE Transactions on Information Theory}, 64(10):6707--6721,
  2018.

\bibitem{Goldstein1964}
A.~A. Goldstein.
\newblock Convex programming in {H}ilbert space.
\newblock {\em Bull. Amer. Math. Soc.}, 70:709--710, 1964.

\bibitem{gonccalves2019levenberg}
D.~S. Gon{\c{c}}alves, M.~L. Gon{\c{c}}alves, and F.~R. Oliveira.
\newblock Levenberg-marquardt methods with inexact projections for constrained
  nonlinear systems.
\newblock {\em arXiv preprint arXiv:1908.06118}, 2019.

\bibitem{gonccalves2020inexact}
D.~S. Gon{\c{c}}alves, M.~L.~N. Gon{\c{c}}alves, and T.~C. Menezes.
\newblock Inexact variable metric method for convex-constrained optimization
  problems.
\newblock {\em Optimization-Online e-prints}, 2020.

\bibitem{Gong2011}
P.~Gong, K.~Gai, and C.~Zhang.
\newblock Efficient euclidean projections via piecewise root finding and its
  application in gradient projection.
\newblock {\em Neurocomputing}, 74(17):2754 -- 2766, 2011.

\bibitem{douglasprojected}
D.~Gonçalves, M.~Gomes-Ruggiero, and C.~Lavor.
\newblock A projected gradient method for optimization over density matrices.
\newblock {\em Optimization Methods and Software}, 31(2):328--341, 2016,
  https://doi.org/10.1080/10556788.2015.1082105.

\bibitem{Max_Jefferson_Renato2020}
M.~L.~N. Gonçalves, J.~G. Melo, and R.~D.~C. Monteiro.
\newblock Projection-free accelerated method for convex optimization.
\newblock {\em Optimization Methods and Software}, 0(0):1--27, 2020.

\bibitem{Iusem2003}
A.~N. Iusem.
\newblock On the convergence properties of the projected gradient method for
  convex optimization.
\newblock {\em Comput. Appl. Math.}, 22(1):37--52, 2003.

\bibitem{IusemSvaiter95}
A.~N. Iusem and B.~F. Svaiter.
\newblock A proximal regularization of the steepest descent method.
\newblock {\em RAIRO Rech. Op\'{e}r.}, 29(2):123--130, 1995.

\bibitem{KiwielMurty1996}
K.~C. Kiwiel and K.~Murty.
\newblock Convergence of the steepest descent method for minimizing quasiconvex
  functions.
\newblock {\em J. Optim. Theory Appl.}, 89(1):221--226, 1996.

\bibitem{Guanghui2013}
G.~{Lan}.
\newblock {The Complexity of Large-scale Convex Programming under a Linear
  Optimization Oracle}.
\newblock {\em arXiv e-prints}, page arXiv:1309.5550, Sep 2013, 1309.5550.

\bibitem{LanZhou2016}
G.~Lan and Y.~Zhou.
\newblock Conditional gradient sliding for convex optimization.
\newblock {\em SIAM J. Optim.}, 26(2):1379--1409, 2016.

\bibitem{Lee_Wright2019}
C.-P. Lee and S.~Wright.
\newblock First-order algorithms converge faster than $o(1/k)$ on convex
  problems.
\newblock In K.~Chaudhuri and R.~Salakhutdinov, editors, {\em Proceedings of
  the 36th International Conference on Machine Learning}, volume~97 of {\em
  Proceedings of Machine Learning Research}, pages 3754--3762, Long Beach,
  California, USA, 09--15 Jun 2019. PMLR.

\bibitem{eigs2}
R.~B. Lehoucq, D.~C. Sorensen, and C.~Yang.
\newblock {\em ARPACK Users' Guide}.
\newblock Society for Industrial and Applied Mathematics, 1998,
  https://epubs.siam.org/doi/pdf/10.1137/1.9780898719628.

\bibitem{Polyak_Levitin1966}
E.~Levitin and B.~Polyak.
\newblock Constrained minimization methods.
\newblock {\em USSR Computational Mathematics and Mathematical Physics}, 6(5):1
  -- 50, 1966.

\bibitem{Ma_Hu_Gao2015}
G.~{Ma}, Y.~{Hu}, and H.~{Gao}.
\newblock An accelerated momentum based gradient projection method for image
  deblurring.
\newblock In {\em 2015 IEEE International Conference on Signal Processing,
  Communications and Computing (ICSPCC)}, pages 1--4, 2015.

\bibitem{mangasarian1994}
O.~L. Mangasarian.
\newblock {\em Nonlinear programming}, volume~10 of {\em Classics in Applied
  Mathematics}.
\newblock Society for Industrial and Applied Mathematics (SIAM), Philadelphia,
  PA, 1994.
\newblock Corrected reprint of the 1969 original.

\bibitem{More1989}
J.~J. Mor\'{e}.
\newblock Gradient projection techniques for large-scale optimization problems.
\newblock In {\em Proceedings of the 28th {IEEE} {C}onference on {D}ecision and
  {C}ontrol, {V}ol. 1--3 ({T}ampa, {FL}, 1989)}, pages 378--381. IEEE, New
  York, 1989.

\bibitem{More1990}
J.~J. Mor\'{e}.
\newblock On the performance of algorithms for large-scale bound constrained
  problems.
\newblock In {\em Large-scale numerical optimization ({I}thaca, {NY}, 1989)},
  pages 32--45. SIAM, Philadelphia, PA, 1990.

\bibitem{Nesterov2004}
Y.~Nesterov.
\newblock {\em Introductory lectures on convex optimization}, volume~87 of {\em
  Applied Optimization}.
\newblock Kluwer Academic Publishers, Boston, MA, 2004.
\newblock A basic course.

\bibitem{Nesterov_Nemirovski2013}
Y.~Nesterov and A.~Nemirovski.
\newblock On first-order algorithms for {$\ell_1$}/nuclear norm minimization.
\newblock {\em Acta Numer.}, 22:509--575, 2013.

\bibitem{NocedalWright2006}
J.~Nocedal and S.~J. Wright.
\newblock {\em Numerical optimization}.
\newblock Springer Series in Operations Research and Financial Engineering.
  Springer, New York, second edition, 2006.

\bibitem{Patrascu_Necoara2018}
A.~Patrascu and I.~Necoara.
\newblock On the convergence of inexact projection primal first-order methods
  for convex minimization.
\newblock {\em IEEE Trans. Automat. Control}, 63(10):3317--3329, 2018.

\bibitem{Mark_Nicolas_Francis2011}
M.~Schmidt, N.~L. Roux, and F.~Bach.
\newblock Convergence rates of inexact proximal-gradient methods for convex
  optimization.
\newblock In {\em Proceedings of the 24th International Conference on Neural
  Information Processing Systems}, NIPS’11, page 1458–1466, Red Hook, NY,
  USA, 2011. Curran Associates Inc.

\bibitem{Man_Cho_Zirui2017}
A.~M.-C. So and Z.~Zhou.
\newblock Non-asymptotic convergence analysis of inexact gradient methods for
  machine learning without strong convexity.
\newblock {\em Optim. Methods Softw.}, 32(4):963--992, 2017.

\bibitem{Sra_Nowozin_Wright2012}
S.~Sra, S.~Nowozin, and S.~Wright.
\newblock {\em Optimization for Machine Learning}.
\newblock Neural information processing series. MIT Press, 2012.

\bibitem{eigs1}
G.~W. Stewart.
\newblock A krylov--schur algorithm for large eigenproblems.
\newblock {\em SIAM Journal on Matrix Analysis and Applications},
  23(3):601--614, 2002, https://doi.org/10.1137/S0895479800371529.

\bibitem{tang_golbabaee_davies2017}
J.~Tang, M.~Golbabaee, and M.~E. Davies.
\newblock Gradient projection iterative sketch for large-scale constrained
  least-squares.
\newblock In D.~Precup and Y.~W. Teh, editors, {\em Proceedings of the 34th
  International Conference on Machine Learning}, volume~70 of {\em Proceedings
  of Machine Learning Research}, pages 3377--3386, International Convention
  Centre, Sydney, Australia, 06--11 Aug 2017. PMLR.

\bibitem{VillaSalzo2013}
S.~Villa, S.~Salzo, L.~Baldassarre, and A.~Verri.
\newblock Accelerated and inexact forward-backward algorithms.
\newblock {\em SIAM J. Optim.}, 23(3):1607--1633, 2013.

\bibitem{WOODGATE1996171}
K.~G. Woodgate.
\newblock Least-squares solution of f = pg over positive semidefinite symmetric
  p.
\newblock {\em Linear Algebra and its Applications}, 245:171 -- 190, 1996.

\bibitem{Zhang_Wang_Yang2019}
F.~Zhang, H.~Wang, J.~Wang, and K.~Yang.
\newblock Inexact primal–dual gradient projection methods for nonlinear
  optimization on convex set.
\newblock {\em Optimization}, 0(0):1--27, 2019.

\end{thebibliography}

\end{document}